\documentclass[10pt]{amsart}

\usepackage{amssymb}
\usepackage[leqno]{amsmath}
\usepackage{mathrsfs}
\usepackage{stmaryrd}
\usepackage{chemarrow}

\usepackage{algorithm}
\usepackage{algpseudocode}
\makeatletter

\makeatother

\usepackage{enumitem}
\usepackage{graphicx}
\usepackage{caption,subcaption}
\captionsetup[subfigure]{margin=0pt, 
                        parskip=0pt, 
                        hangindent=0pt, indention=0pt, 
                        labelformat=parens, labelfont=rm}

\usepackage{xcolor}
\usepackage[pdftex,bookmarksnumbered,bookmarksopen,colorlinks,linkcolor=red,anchorcolor=black,citecolor=blue,urlcolor=blue]{hyperref}
\usepackage[all]{xy}
\usepackage{tikz}
\usetikzlibrary{arrows}

\usepackage{multirow}
\usepackage[hyperpageref]{backref}

% comments
%\newcommand{\LC}[1]{\textcolor{cyan}{#1}}

%\usepackage[pdftex,colorlinks=true,citecolor=myBlue,linkcolor=myBlue]{hyperref}

  \newcounter{mnote}
  \setcounter{mnote}{0}
  
  \let\oldmarginpar\marginpar
    \renewcommand\marginpar[1]{\-\oldmarginpar[\raggedleft\footnotesize #1]%
    {\raggedright\footnotesize #1}}

\newtheorem{theorem}{Theorem}[section]
\newtheorem{lemma}[theorem]{Lemma}

\newtheorem{remark}[theorem]{Remark}

\newcommand{\dd}{\,{\rm d}}
\newcommand{\bs}{\boldsymbol}

\newcommand{\curl}{\operatorname{curl}}
\renewcommand{\div}{\operatorname{div}}
\newcommand{\grad}{\operatorname{grad}}
\newcommand{\gradcurl}{\operatorname{grad}\operatorname{curl}}
\newcommand{\curlcurl}{\operatorname{curl}\operatorname{curl}}
\DeclareMathOperator*{\tr}{tr}
\DeclareMathOperator*{\rot}{rot}

\numberwithin{equation}{section}

\makeatletter
\@namedef{subjclassname@2020}{%
  \textup{2020} Mathematics Subject Classification}
\makeatother

\begin{document}
\title[Decoupled FEM for quad-curl problems]{Error analysis of a decoupled finite element method for quad-curl problems}

\author{Shuhao Cao}%
\address{Department of Mathematics and Statistics, Washington University, St. Louis, MO 63130, USA}%
\email{s.cao@wustl.edu}%
\author{Long Chen}%
\address{Department of Mathematics, University of California at Irvine, Irvine, CA 92697, USA}%
\email{chenlong@math.uci.edu}%
\author{Xuehai Huang}%
\address{School of Mathematics, Shanghai University of Finance and Economics, Shanghai 200433, China}%
\email[Corresponding author]{huang.xuehai@sufe.edu.cn}%

\thanks{The third author is the corresponding author.}
\thanks{The first author was supported in part by the National Science Foundation under grant DMS-1913080. The second author was supported in part by the National Science Foundation under grants DMS-1913080 and DMS-2012465. The third author was supported by the National Natural Science Foundation of China under grants 11771338 and 12171300, the Natural Science Foundation of Shanghai 21ZR1480500, and the Fundamental Research Funds for the Central Universities 2019110066.}

\subjclass[2020]{
%65N55;   %% Multigrid methods; domain decomposition for boundary value problems involving PDEs;
%65F10;   %% Iterative numerical methods for linear systems;
65N30;   %% Finite element, Rayleigh-Ritz and Galerkin methods for boundary value problems involving PDEs;
65N12;   %% Stability and convergence of numerical methods for boundary value problems involving PDEs;
65N22;   %% Numerical solution of discretized equations for boundary value problems involving PDEs;
65N50;   %% Mesh generation, refinement, and adaptive methods for boundary value problems involving PDEs;
%65N15;   %% Error bounds for boundary value problems involving PDEs;
}

\begin{abstract}
Finite element approximation to a decoupled formulation for the quad--curl problem is studied in this paper. The difficulty of constructing elements with certain conformity to the quad--curl problems has been greatly reduced. For convex domains, where the regularity assumption holds for Stokes equation, the approximation to the curl of the true solution has quadratic order of convergence and first order for the energy norm. If the solution shows singularity, an a posterior error estimator is developed and a separate marking adaptive finite element procedure is proposed, together with its convergence proved. Both the a priori and a posteriori error analysis are supported by the numerical examples.
\end{abstract}

\keywords{quad-curl problem, decoupled formulation, a posteriori error estimator, adaptive finite element methods}

\maketitle

%\tableofcontents

\section{Introduction}
Quad-curl problem arises from multiphysics simulation such as modeling a
magnetized plasma in magnetohydrodynamics (MHD). In both limiting regimes, resistive MHD (\cite{biskamp1996magnetic,Sweet:1958Neutral}) and electron MHD (\cite{kingsep1990reviews,Chacon;Simakov;Zocco:2007Steady-state,simakov2009quantitative}), discretizing the quad-curl operator is one of the keys to simulate these models. In the meantime, quad-curl operator also plays an important role in approximating the Maxwell transmission eigenvalue problem \cite{Monk;Sun:2012Finite,Cakoni;Haddar:2007variational}. Recently, the designing of the approximations for quad-curl problems gain quite a few attentions from the finite element community. 
For example, conforming finite element spaces for the quad-curl problem has been recently constructed in \cite{Zhang;Wang;Zhang:20192-Conforming,Hu;Zhang;Zhang:2020Simple} in two dimensions and \cite{Neilan:2015Discrete,Zhang;Zhang:2020Curl-curl,HuZhangZhang2020curcurl3d} in three dimensions. Nonconforming and low order finite element spaces can be found in \cite{Zheng;Hu;Xu:2011nonconforming,Huang2020}. The mixed methods are studied in \cite{Sun:2016quad-curl,Zhang2018a,WangSunCui2019new}. A formulation based on the Hodge decomposition is in \cite{BrennerSunSung2017Hodge}. A discontinuous Galerkin approach is studied in \cite{HongHuShuXu2012Discontinuous}. In \cite{SunZhangZhang2019curl}, a novel weak Galerkin formulation exploits the conforming space for curl-curl problem as a nonconforming space for the quad-curl problem. The a posteriori error analysis in two dimensions is studied in \cite{WangZhangSunZhang2020priori}. We also refer to \cite{ZhaoZhang2021} for a virtual element method in two dimensions.

The structures of the quad-curl problem are unique as the operator may have a bigger kernel than the one in the curl-curl problem. In a simply-connected domain, the weak formulation of the quad-curl problem is equivalent to that with a grad curl operator \cite{Zhang2018a,Arnold;Hu:2021Complexes}. Consequently, the stringent continuity condition of the grad curl drives the local polynomial space's dimension to be much bigger than that of the curl-curl problem. This poses extra difficulty in constructing the conforming finite element approximations, and renders them hard to solve especially in three dimensions. The nonconforming elements \cite{Zheng;Hu;Xu:2011nonconforming} greatly simply the local structure of the space, and is more preferable in approximating the quad-curl problem in terms of the computational resources.

In \cite{ChenHuang2018,Huang2020}, a novel way of further simplifying the structure of the quad-curl problem is proposed. The quad-curl problem is decoupled into three sub-problems, two curl-curl equations, and one Stokes equation, all of which have mature finite element approximation theories (e.g., \cite{CrouzeixRaviart1973,Nedelec1980,Kikuchi:1989discrete,Kikuchi:1989mixed}). In this paper, we use lower-order N\'ed\'elec elements \cite{Nedelec1980,Nedelec1986} to discretize curl-curl equations, and the nonconforming $P_1$-$P_0$ finite element to discretize Stokes equation, then analyze this decoupled finite element method (FEM) for the quad-curl problem. Due to the decoupling mechanism, one of the major advantages is that the curl of the primal variable can be approximated an order higher than most of the conforming or nonconforming FEMs.

Meanwhile, due to the nature of quad-curl operator \cite{Nicaise:2018Singularities}, on a polyhedral domain, the singularities of solution may manifest themselves as either corner singularities of the Stokes system with Dirichlet boundary conditions, corner/edge singularities of the Maxwell problem, or both. To cope with such solutions with the presence of singularities, adaptive finite element method (AFEM) is favored over the finite element method performed on a uniformly refined mesh. The computational resources are adaptively allocated throughout different locations of the mesh based on the local estimated approximation error. Thus the AFEM can achieve the same overall accuracy while using fewer degrees of freedom than the one with uniform mesh.

Opting for a decoupled system using existing and mature elements for each offers great facilitation to the AFEM pipeline. Now there are three major pieces to the puzzle: 
the a posteriori error estimation for the conforming approximation to the Maxwell problem 
(e.g., \cite{Beck;Hiptmair;Hoppe;Wohlmuth:2000Residual,
Cai;Cao:2015recovery-based,Schoberl2008,Cochez-Dhondt;Nicaise:2007Robust}), that for a nonconforming discretization to the Stokes problem (e.g., \cite{DariDuranPadra1995,Verfurth:1991posteriori,Dari;Duran;Padra;Vampa:1996posteriori}), and the design of a convergent AFEM algorithm (\cite{HuXu2013,Zhong;Shu;Chen;Xu:2010Convergence,ZhongChenShuWittumEtAl2012}). In this paper, combining the ingredients from both conforming and nonconforming methods, we are able to show that the AFEM algorithm based on the a posteriori error estimation is convergent under common assumptions. Since the nonconforming $P_1$-$P_0$ finite element is element-wisely divergence free, as a result, the a posteriori error estimator only involves the discrete velocity, not the discrete pressure. To the best of our knowledge, this paper is the first work to prove the convergence of an adaptive finite element method of the quad-curl problem. Additionally, in terms of solving the resulting linear systems, the biggest advantage of the decoupled formulation is to allow users taking advantage of the existing fast solvers for Stokes and Maxwell problems.

This paper is organized as follows: Section \ref{sec:fem} introduces the decoupled formulation
as well as its well-posedness. Section \ref{sec:apriori} proves the 
a priori error estimation in both the energy norm and the $\bs L^2$-norm. 
Section \ref{sec:estimator} gives the a posteriori error analysis, Section \ref{sec:quasi-orthogonality} shows the quasi-orthogonality of the solution, and a convergence proof is given in Section \ref{sec:convergence}. In Section \ref{sec:numerics} a comparison of the rates of convergence of the AFEMs using various marking strategies is presented. 

%%%%%%%%%%%%%%%%%%%%%%%%%%%%%%%%%%%%%%%%
\section{A quad-$\curl$ problem and a decoupled formulation}
\label{sec:fem}
%%%%%%%%%%%%%%%%%%%%%%%%%%%%%%%%%%%%%%%%
%Noting that $\curl K_0^c=\curl\bs H_0(\curl, \Omega)$, we get the following exact sequence from the 3D de Rham complex \eqref{eq:deRhamcomplex3d-0}
%\[
%0\; \autorightarrow{}{}K_0^c \autorightarrow{$\curl$}{} \bs H_0(\div , \Omega) \autorightarrow{$\div $}{} L_0^2(\Omega)\autorightarrow{}{} \; 0,
%\]
%which together with Remark 2.15 in \cite{PaulyZulehner2016} implies the exactness of the dual complex
%\[
%0\autorightarrow{}{}L_0^2(\Omega)\;\autorightarrow{$\grad$}{} \; \boldsymbol (\bs H_0(\div , \Omega))' \; \autorightarrow{$\curl$}{} \;(K_0^c)'\;\autorightarrow{}{}\;0.
%\]

Let $\Omega\subset\mathbb R^3$ be a polyhedron homomorphic to a ball, and $\bs f\in \bs H(\div, \Omega)$ with $\div\bs f=0$. Consider the quad-$\curl$ problem
\begin{equation}\label{eq:quadcurl}
\left\{
\begin{aligned}
&(\curl)^4\bs u=\bs f\quad\quad\quad\quad\quad\quad\;\;\text{in }\Omega, \\
&\div\bs u=0\quad\quad\quad\quad\quad\quad\quad\;\;\;\text{in }\Omega, \\
&\bs u\times\bs n=(\curl\bs u)\times\bs n=\bs 0\quad\text{on }\partial\Omega.
\end{aligned}
\right.
\end{equation}
The primal formulation of the quad-$\curl$ problem \eqref{eq:quadcurl} is to find $\bs  u\in\bs H_0(\curlcurl, \Omega)$
such that
\begin{equation}\label{eq:quadcurlprimal}
(\curl\curl\bs u, \curl\curl\bs v)=(\bs f, \bs v) \quad \forall~\bs v\in \bs H_0(\curlcurl, \Omega),
\end{equation}
where by denoting $\Gamma:=\partial \Omega$
\begin{equation}
\label{eq:space-h2curl}
 \begin{aligned}
\bs H_0(\curlcurl, \Omega):=\{&\bs v\in\bs L^2(\Omega, \mathbb{R}^3): \curl \bs v,
\, \curl\curl \bs v\in\bs L^2(\Omega, \mathbb{R}^3), \\
& \div\bs v=0,  \textrm{ and } \bs v\times\bs n=(\curl \bs v)\times\bs n=\boldsymbol{0} \text{ on } \Gamma \}.
\end{aligned} 
\end{equation}

Here we remark that $H_0(\curlcurl, \Omega)$ is same as $H_0(\gradcurl, \Omega)$. This is because  $\curl \bs v\in\bs H_0^1(\Omega, \mathbb{R}^3)$ is equivalent to $\curl \bs v\in\bs H_0(\curl, \Omega)\cap \bs H_0(\div, \Omega)$; see \cite{Girault;Raviart:1986Finite,Zhang2018a}.

A natural mixed method is to mimic the biharmonic equation by introducing $\bs w = \nabla \times \nabla \times \bs u$ and write as a system for which standard edge elements can be used; see \cite{Sun:2016quad-curl}. The main drawback of this decoupling is the loss of the order of convergence due to the fact that boundary condition $(\curl\bs u)\times\bs n=\bs 0$ is imposed weakly. 
Indeed a natural space for $\bs w$ is  $\bs H^{-1}(\curl\curl, \Omega):=\{\bs v\in \bs L^2(\Omega; \mathbb R^3): \curl\curl\bs v\in \bs H^{-1}(\div, \Omega)\}$. Here $\bs H^{-1}(\div, \Omega):=\{\bs v\in\bs H^{-1}(\Omega; \mathbb R^3): \div\bs v\in H^{-1}(\Omega)\}$ is the dual space of $\bs H_0(\curl, \Omega)$ \cite{ChenHuang2018}.
%Note that $\curl\curl\bs v\in \bs H^{-1}(\div, \Omega)$ is equivalent to $\curl\curl\bs v\in \bs H^{-1}(\Omega, \mathbb R^3)$.
Also inheriting from decoupling the biharmonic equation, fast solvers for the linear algebraic system arising from this discretization could be an issue.  

Instead we shall consider a decoupling \cite[Section~3.4]{ChenHuang2018} (see also \cite{Zhang2018a}) so that the optimal order of convergence can be preserved, and meanwhile the solution can be computed efficiently. More importantly for solutions with singularities, the a posteriori error analysis and adaptive finite element methods can be applied to retain optimal order of convergence which is the focus of this work. 

Introduce the space
\begin{equation*}
\label{eq:space-kc0}
K_0^c:=\{\boldsymbol{\phi}\in \bs H_0(\curl, \Omega): \mathrm{div}\boldsymbol{\phi}=0\}=\bs H_0(\curl, \Omega)/\grad H_0^1(\Omega)
\end{equation*}
equipped with norm $\|\cdot\|_{H(\curl)}$.
Due to the following commutative diagram
\begin{equation*}
\begin{array}{c}
\xymatrix@R-=1.0pc{
\bs H_0^{1}(\Omega; \mathbb R^3) \ar[r]^-{\Delta} & \boldsymbol{H}^{-1}(\Omega; \mathbb{R}^3) & \\
L_0^2(\Omega) \ar[r]^-{\grad} & \boldsymbol H^{-1}({\curl}, \Omega) \ar[r]^-{\curl}\ar@{}[u]|{\bigcup}
                & (K_0^c)' \ar[r]^-{} & 0 \\
&\boldsymbol H_0(\div , \Omega) \ar[u]^{I} & K_0^c \ar[u]_{\curl\curl}\ar[l]_-{\curl}}
\end{array},
\end{equation*}
the primal formulation \eqref{eq:quadcurlprimal} of the quad-$\curl$ problem can be decoupled into the following three systems \cite[Section~3.4]{ChenHuang2018} (see also \cite{Zhang2018a}):

\medskip

\noindent {\rm Step 1.}
Given $\bs f\in \bs L^2(\Omega)$, find $\bs w\in \bs H_0(\curl, \Omega)$, $\sigma\in H_0^{1}(\Omega)$ s.t.
\begin{align}
(\curl  \bs w, \curl  \bs v)+(\bs v,\nabla \sigma)&=(\bs f, \bs v)  \quad\quad\quad\quad \forall~\bs v\in \bs H_0(\curl, \Omega),  \label{eq:quarticcurldecouple1}\\
(\bs w,\nabla \tau) &= 0 \quad\quad\quad\quad\quad\;\;\;\, \forall~ \tau\in H_0^{1}(\Omega).  \label{eq:quarticcurldecouple2}
\end{align}

\smallskip

\noindent {\rm Step 2.} Given $\bs w$ computed in Step 1, find $\boldsymbol{\phi}\in \boldsymbol H_0^1(\Omega; \mathbb{R}^3)$, $p\in L_0^{2}(\Omega)$ s.t.
\begin{align}
(\boldsymbol\nabla\boldsymbol\phi, \boldsymbol\nabla\boldsymbol\psi) + (\div\boldsymbol\psi, p) & =(\curl  \bs w, \boldsymbol\psi) \quad\;\;\, \forall~\boldsymbol{\psi}\in  \boldsymbol H_0^1(\Omega; \mathbb{R}^3), \label{eq:quarticcurldecouple3}\\
(\div\boldsymbol\phi, q) &= 0 \quad\quad\quad\quad\quad\;\;\;\, \forall~ q\in L_0^{2}(\Omega).  \label{eq:quarticcurldecouple4}
\end{align}

\smallskip

\noindent {\rm Step 3.} Given $\bs \phi$ computed in Step 2, find 
$\bs u\in \bs H_0(\curl, \Omega)$ and $\xi\in H_0^{1}(\Omega)$ s.t. 
\begin{align}
(\curl \bs  u, \curl \bs  \chi)+(\bs \chi,\nabla \xi)&= (\boldsymbol{\phi}, \curl \bs \chi) \quad\quad \forall~\bs \chi\in \bs H_0(\curl, \Omega), \label{eq:quarticcurldecouple5} \\
(\bs u,\nabla \zeta) &= 0 \quad\quad\quad\quad\quad\;\;\;\, \forall~ \zeta\in H_0^{1}(\Omega).  \label{eq:quarticcurldecouple6}
\end{align}
In other words, the primal formulation \eqref{eq:quadcurlprimal} of the quad-$\curl$ problem \eqref{eq:quadcurl} can be decoupled into two Maxwell equations and one Stokes equation. 

Each system is well-posed and the solution $(\bs w, \sigma, \bs \phi, p, \bs u, \xi)$ to \eqref{eq:quarticcurldecouple1}-\eqref{eq:quarticcurldecouple6} exists and is unique. Now we show briefly, without resorting to the abstract framework in \cite{ChenHuang2018}, the equivalence of the decoupled formulation  \eqref{eq:quarticcurldecouple1}-\eqref{eq:quarticcurldecouple6} and the primary formulation \eqref{eq:quadcurlprimal}. 

By taking $\chi = \nabla \xi$ in \eqref{eq:quarticcurldecouple5}, we conclude the Lagrange multiplier $\xi = 0$. Therefore \eqref{eq:quarticcurldecouple5} becomes $\bs \phi = \curl \bs u$. Notice that the boundary condition $\curl \bs u \times \bs n = 0$ implies that the tangential trace $\bs \phi \times \bs n$ is zero, while $\bs u\times \bs n = 0$ on boundary implies the normal trace $\bs \phi\cdot \bs n = \rot_{\Gamma} \bs u = 0$. 
Together with $\curl \bs \phi = \curl \curl \bs u\in \bs L^2(\Omega)$ and $\div \bs \phi = 0$, by the embedding $\bs H_0(\curl,\Omega)\cap \bs H_0(\div, \Omega)\hookrightarrow \bs H_0^1(\Omega)$ \cite{RaviartThomas1977}, we conclude that $\bs \phi = \curl \bs u\in \bs  H_0^1(\Omega)$.

Furthermore by the identity 
\[
(\boldsymbol\nabla\boldsymbol\phi, \boldsymbol\nabla\boldsymbol\psi)=(\curl\boldsymbol\phi, \curl\boldsymbol\psi)+(\div\boldsymbol\phi, \div\boldsymbol\psi)\quad\forall~\boldsymbol{\phi}, \boldsymbol\psi\in\boldsymbol H_0^1(\Omega; \mathbb{R}^3),
\]
and $\div \bs \phi = 0$,   we can rewrite \eqref{eq:quarticcurldecouple3} as
\begin{equation}\label{eq:curlphi}
(\curl\boldsymbol\phi, \curl\boldsymbol\psi) + (\div\boldsymbol\psi, p)  =(\curl  \bs w, \boldsymbol\psi) \quad\;\;\, \forall~\boldsymbol{\psi}\in  \boldsymbol H_0^1(\Omega; \mathbb{R}^3).
\end{equation}

Noticing the fact $\div\bs f=0$, by choosing $\bs v = \nabla \sigma$, we get from \eqref{eq:quarticcurldecouple1} that the Lagrange multipliers  $\sigma$ is also zero. Now choosing $\bs \psi = \curl \bs v$ in \eqref{eq:curlphi} for a $\bs v \in \bs H_0(\curlcurl, \Omega)$, we get 
$$
(\curl\curl\bs u, \curl\curl\bs v) = (\curl\boldsymbol\phi, \curl\boldsymbol\psi) = (\curl  \bs w, \boldsymbol\psi) = (f, \bs v),
$$
which verifies that the solution $\bs u$ to \eqref{eq:quarticcurldecouple5}-\eqref{eq:quarticcurldecouple6} is also the solution to \eqref{eq:quadcurlprimal} and vice versa.

\begin{remark}\rm
The decoupled formulation \eqref{eq:quarticcurldecouple1}-\eqref{eq:quarticcurldecouple6} also works
for the case $\div\bs f\neq 0$.
By taking $\bs{v} = \nabla \tau$ in \eqref{eq:quarticcurldecouple1}, we get $\Delta\sigma=\div\bs f$.
After deriving $\sigma$, we can simply replace the right hand side $\bs f$ by $\bs f-\nabla\sigma$, which is divergence-free. This is a Helmholtz decomposition where the non-compatible right-hand side's divergence has been taken into account without being explicitly formulated.
\end{remark}
% \begin{remark}\rm 
% Consider $(\curl)^4\bs u+\bs u=\bs f$ with $\div \bs f$ being not necessary to be zero.
% By rewriting it as $(\curl)^4\bs u=\bs f-\bs u$, equation $(\curl)^4\bs u+\bs u=\bs f$ is equivalent to find $\bs w\in \bs H_0(\curl, \Omega)$, $\boldsymbol{\phi}\in \boldsymbol H_0^1(\Omega; \mathbb{R}^3)$, $p\in L_0^{2}(\Omega)$, $\bs u\in \bs H_0(\curl, \Omega)$ and $\xi\in H_0^{1}(\Omega)$ s.t. (cf. \cite[(3.18)]{Zhang2018a})
% \begin{align*}
% (\curl  \bs w, \curl  \bs v)+(\bs w,\nabla \tau)&=(\bs f-\bs u, \bs v)  \quad\quad \forall~\bs v\in \bs H_0(\curl, \Omega), \tau\in H_0^{1}(\Omega), \\
% (\boldsymbol\nabla\boldsymbol\phi, \boldsymbol\nabla\boldsymbol\psi) + (\div\boldsymbol\psi, p) & =(\curl  \bs w, \boldsymbol\psi) \quad\;\;\, \forall~\boldsymbol{\psi}\in  \boldsymbol H_0^1(\Omega; \mathbb{R}^3), \\
% (\div\boldsymbol\phi, q) &= 0 \quad\quad\quad\quad\quad\;\;\;\, \forall~ q\in L_0^{2}(\Omega),  \\
% (\curl \bs  u, \curl \bs  \chi)+(\bs \chi,\nabla \xi)&= (\boldsymbol{\phi}, \curl \bs \chi) \quad\quad \forall~\bs \chi\in \bs H_0(\curl, \Omega).
% \end{align*}
% \end{remark}

\begin{remark}\rm 
As we have shown the auxiliary function $\bs \phi =\curl \bs u$, but $\bs w \neq \curl\curl \bs u$. Equation \eqref{eq:quarticcurldecouple1} can be equivalently written as $\curl\curl \bs w = \bs f$ but now $\bs w \in \bs H_0(\curl,\Omega)$ while $\curl\curl \bs u\in\bs H_0(\div,\Omega)$ may not satisfy the tangential boundary condition. 
\end{remark}

\section{Discrete Methods and A Priori Error Analysis}
\label{sec:apriori}
We consider a conforming mixed finite element method of the Maxwell equations~\eqref{eq:quarticcurldecouple1}-\eqref{eq:quarticcurldecouple2}, and \eqref{eq:quarticcurldecouple5}-\eqref{eq:quarticcurldecouple6} but a nonconforming method for Stokes equation~\eqref{eq:quarticcurldecouple3}-\eqref{eq:quarticcurldecouple4}. We refer to~\cite{Zhang2018a} for a conforming mixed finite element method. 

%One motivation to use non-conforming method 

Let $\{\mathcal T_h\}$ be a family of triangulation of $\Omega$ with mesh size $h=\max_{K\in \mathcal T_h}h_K$, where $h_K$ is the diameter of the tetrahedron $K$.
Denote the $p$-th order Lagrange element space by
\[
V_h^p:=\{v_h\in H_0^{1}(\Omega): v_h|_K\in\mathbb P_p(K) \textrm{ for each } K\in\mathcal T_h\},
\]
and the lowest-order N\'ed\'elec edge element space \cite{Nedelec1980} by
\[
\bs V_h^c:=\{\bs v_h\in \bs H_0(\curl, \Omega): \bs v_h|_K\in\mathbb P_{0}(K;\mathbb R^3)\oplus\bs x\wedge\mathbb P_{0}(K;\mathbb R^3) \textrm{ for each } K\in\mathcal T_h\}.
\]
We use $V_h^1-\bs V_h^c$ to discretize the Maxwell equation~\eqref{eq:quarticcurldecouple1}-\eqref{eq:quarticcurldecouple2}. 
Find $\bs w_h\in \bs V_h^c$, $\sigma_h\in V_h^1$ s.t.
\begin{align}
(\curl \bs w_h, \curl \bs v_h)+(\bs v_h,\nabla \sigma_h)&=(\bs f, \bs v_h)  \quad\quad\quad\quad\;\; \forall~\bs v_h\in \bs V_h^c,  \label{eq:mfem1}\\
(\bs w_h,\nabla \tau_h) &= 0 \quad\quad\quad\qquad\qquad \forall~ \tau_h\in V_h^1.  \label{eq:mfem2}
\end{align}

We then use the nonconforming $P_1$-$P_0$ element \cite{CrouzeixRaviart1973} to discretize the Stokes problem \eqref{eq:quarticcurldecouple3}-\eqref{eq:quarticcurldecouple4}. To this end, let
\begin{equation*}
%\label{eq:space-cr}
\begin{aligned}
\bs V_h^{\rm CR} :=
\{
\bs\psi_h\in \boldsymbol L^2(\Omega; \mathbb{R}^3): &\,\bs \psi_h|_K\in\mathbb P_{1}(K;\mathbb R^3) \textrm{ for each } K\in\mathcal T_h, \\
&\textrm{ and } (\llbracket \bs \psi_h\rrbracket, 1)_F=0  \textrm{ for each } F\in\mathcal F_h
\},
\end{aligned} 
\end{equation*}
where $\llbracket \bs v \rrbracket (\bs x):= \lim\limits_{\epsilon\to 0^+} \left( \bs v|_{K_1} (\bs x-\epsilon \bs n_{K_1}) -  \bs v|_{K_2}(\bs x+\epsilon \bs n_{K_1}) \right) $ is defined as the jump on face $F$ for $\bs x \in F$ and $\boldsymbol n_{K_1}$ being the outer unit normal to $K_1$ on face $F$. Denote the piecewise constant space as 
\[
\mathcal Q_h :=\{q_h\in L_0^2(\Omega): \; q_h|_K\in\mathbb P_0(K) \textrm{ for each } K\in\mathcal T_h\}.
\]
Given $\bs w_h$ computed from \eqref{eq:mfem1}-\eqref{eq:mfem2}, find $\boldsymbol{\phi}_h\in  \bs V_h^{\rm CR}$, $p_h\in \mathcal Q_h$ s.t.
\begin{align}
(\boldsymbol\nabla_h\boldsymbol\phi_h, \boldsymbol\nabla_h\boldsymbol\psi_h) + (\div_h\boldsymbol\psi_h, p_h) & =(\curl\bs w_h, \boldsymbol\psi_h) \quad\;\;\; \forall~\boldsymbol{\psi}_h\in  \bs V_h^{\rm CR}, \label{eq:mfem3}\\
(\div_h\boldsymbol\phi_h, q_h) &= 0 \quad\quad\quad\quad\qquad\quad \forall~ q_h\in \mathcal Q_h. \label{eq:mfem4}
\end{align}
Hereafter $\boldsymbol\nabla_h$, $\curl_h$ and $\div_h$ mean the element-wise defined counterparts of 
$\boldsymbol\nabla$, $\curl$ and $\div$ with respect to $\mathcal T_h$.

%\LC{Adding description on the linear edge element space here.}
Upon solving the system above, $\bs \phi_h$ is a second-order approximation to $\curl \bs u$ when the data is smooth. Finally, when one needs to seek a better approximation to $\bs u$ under $\bs L^2$-norm, $\bs u_h\in \bs V_h^{c_1}$ and $\xi_h\in V_h^{2}$ are sought such that
they satisfy
\begin{align}
(\curl \bs  u_h, \curl \bs  \chi_h)+(\bs \chi_h,\nabla \xi_h)&= (\boldsymbol{\phi}_h, \curl \bs \chi_h) \quad\quad \forall~\bs \chi_h\in \bs V_h^{c_1}, \label{eq:mfem5} \\
(\bs u_h,\nabla \zeta_h) &= 0 \quad\quad\quad\quad\quad\qquad \forall~ \zeta_h\in V_h^{2}.  \label{eq:mfem6}
\end{align}
Here $\bs V_h^{c_1}$ is the linear second family of N\'{e}d\'{e}lec element:
\[
\bs V_h^{c_1}:=\{\bs v_h\in \bs H_0(\curl, \Omega): \bs v_h|_K\in\mathbb P_{1}(K;\mathbb R^3) \textrm{ for each } K\in\mathcal T_h\},
\]
and $V_h^{2}$ is the quadratic Lagrange element.

The finite element pair $(\bs V_h^{\rm CR}, \mathcal Q_h)$ is stable for the Stokes equation \cite{BoffiBrezziFortin2013}, i.e., we have for any $\widetilde{\bs\phi}_h\in \bs V_h^{\rm CR}$ and $\widetilde{p}_h\in\mathcal Q_h$ that
\begin{equation*}%\label{eq:infsupncfmP1P0}
|\widetilde{\bs\phi}_h|_{1,h}+
\|\widetilde p_h\|_0\lesssim \sup_{\substack{ \bs\psi_h\in \bs V_h^{\rm CR}\\ q_h\in\mathcal Q_h} }
\frac{(\boldsymbol\nabla_h\widetilde{\boldsymbol\phi}_h, \boldsymbol\nabla_h\boldsymbol\psi_h) 
+ 
(\div_h\boldsymbol\psi_h, \widetilde{p}_h)+
(\div_h\widetilde{\boldsymbol\phi}_h, q_h)}{|\bs\psi_h|_{1,h}+\|q_h\|_0}.
\end{equation*}

\subsection{A priori error analysis}
Next we focus on the a priori error analysis for the decoupled mixed finite element method \eqref{eq:mfem1}-\eqref{eq:mfem6}. First of all, since $\div\bs f=0$ and $\nabla V_h^1\subset \bs V_h^c$, we get from \eqref{eq:mfem1} and \eqref{eq:mfem5} that $\sigma_h=0$ and $\xi_h=0$.

\begin{lemma}[Galerkin orthogonality]
Let $(\bs w, 0)\in \bs H_0(\curl, \Omega)\times H_0^{1}(\Omega)$ be the solution of the Maxwell equation \eqref{eq:quarticcurldecouple1}-\eqref{eq:quarticcurldecouple2}, and $(\bs w_h, 0)\in \bs V_h^c\times V_h^1$ be the solution of the mixed method \eqref{eq:mfem1}-\eqref{eq:mfem2}. Then
\begin{equation}\label{eq:GalerkinOrthcurl}
(\curl (\bs w-\bs w_h), \curl \bs v_h)=0\quad \forall~\bs v_h\in \bs V_h^c.
\end{equation}
\end{lemma}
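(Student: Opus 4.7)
The plan is to exploit the conformity $\bs V_h^c\subset \bs H_0(\curl,\Omega)$ together with the already-established fact that both Lagrange multipliers vanish, i.e.\ $\sigma=0$ at the continuous level (obtained by testing \eqref{eq:quarticcurldecouple1} with $\bs v=\nabla\sigma$ and using $\div\bs f=0$) and $\sigma_h=0$ at the discrete level (already noted just before the lemma, using $\nabla V_h^1\subset\bs V_h^c$ and $\div\bs f=0$). Once these two vanish, the lemma becomes a routine Galerkin-orthogonality subtraction.

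Concretely, first I would test the continuous equation \eqref{eq:quarticcurldecouple1} with an arbitrary $\bs v_h\in \bs V_h^c\subset \bs H_0(\curl,\Omega)$; since $\sigma=0$ the $(\bs v_h,\nabla\sigma)$ term drops out and I obtain $(\curl\bs w,\curl\bs v_h)=(\bs f,\bs v_h)$. Next I would take the same test function in the discrete equation \eqref{eq:mfem1}; since $\sigma_h=0$ the $(\bs v_h,\nabla\sigma_h)$ term likewise drops out, giving $(\curl\bs w_h,\curl\bs v_h)=(\bs f,\bs v_h)$. Subtracting these two identities yields $(\curl(\bs w-\bs w_h),\curl\bs v_h)=0$ for all $\bs v_h\in \bs V_h^c$, which is exactly \eqref{eq:GalerkinOrthcurl}.

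There is no real obstacle here; the only subtlety worth stating explicitly in the write-up is the justification that both Lagrange multipliers vanish, since that is what allows the two bilinear forms on the test side to match verbatim. In particular, for the continuous argument one needs $\div\bs f=0$ (a hypothesis of the problem) so that picking $\bs v=\nabla\sigma$ gives $\|\nabla\sigma\|_0^2=0$, while the discrete argument uses $\nabla V_h^1\subset \bs V_h^c$ so that $\bs v_h=\nabla\sigma_h$ is an admissible discrete test function. Beyond that, the proof is a direct conforming-method Galerkin orthogonality and requires nothing more than the two equations and a subtraction.
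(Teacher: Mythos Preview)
Your proof is correct and follows essentially the same approach as the paper: use that both Lagrange multipliers $\sigma$ and $\sigma_h$ vanish, then subtract \eqref{eq:mfem1} from \eqref{eq:quarticcurldecouple1} tested with $\bs v_h\in\bs V_h^c\subset\bs H_0(\curl,\Omega)$. The paper's proof is just the one-line version of what you wrote.
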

\begin{proof}
As the Lagrange multiplier $\sigma = 0$ and its approximation $\sigma_h = 0$, subtracting \eqref{eq:mfem1} from \eqref{eq:quarticcurldecouple1}, we get the desired orthogonality.
\end{proof}
The error analysis of the mixed finite element method \eqref{eq:mfem1}-\eqref{eq:mfem2} 
% can be found in \cite{Arnold2018,ArnoldFalkWinther2006} 
is first studied by F. Kikuchi in \cite{Kikuchi:1989mixed,Kikuchi:1989discrete}.
We recall it for completeness.
\begin{lemma}
\label{lem:estimate-w}
Let $(\bs w, 0)\in \bs H_0(\curl, \Omega)\times H_0^{1}(\Omega)$ be the solution of the Maxwell equation \eqref{eq:quarticcurldecouple1}-\eqref{eq:quarticcurldecouple2}, and $(\bs w_h, 0)\in \bs V_h^c\times V_h^1$ the solution of the mixed method \eqref{eq:mfem1}-\eqref{eq:mfem2}.
Assume $\curl\bs w \in \boldsymbol H^1(\Omega; \mathbb{R}^3)$, then we have
\begin{equation}\label{eq:errorpriorwh}
\|\curl (\bs w-\bs w_h)\|_0\lesssim h|\curl\bs w|_1.
\end{equation}
\end{lemma}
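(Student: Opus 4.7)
The plan is to reduce the error estimate to a best-approximation statement via Galerkin orthogonality, and then exploit the commuting diagram between the Nédélec edge space and the lowest-order Raviart--Thomas face space.

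First I would derive a quasi-best approximation property in the $\curl$-seminorm. Using \eqref{eq:GalerkinOrthcurl}, for any $\bs v_h\in\bs V_h^c$ we have $(\curl(\bs w-\bs w_h),\curl(\bs v_h-\bs w_h))=0$, so
\begin{equation*}
\|\curl(\bs w-\bs w_h)\|_0^2 = (\curl(\bs w-\bs w_h),\curl(\bs w-\bs v_h)),
\end{equation*}
and Cauchy--Schwarz yields $\|\curl(\bs w-\bs w_h)\|_0 \le \|\curl(\bs w-\bs v_h)\|_0$ for every $\bs v_h\in\bs V_h^c$. Thus it suffices to produce one specific $\bs v_h$ whose curl approximates $\curl\bs w$ to first order.

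Second, I would use the canonical edge-element interpolant $\Pi_h^c$ onto $\bs V_h^c$ and the lowest-order Raviart--Thomas interpolant $\Pi_h^{\rm RT}$ together with the commuting identity $\curl(\Pi_h^c\bs w)=\Pi_h^{\rm RT}(\curl\bs w)$. Setting $\bs v_h=\Pi_h^c\bs w$ in the bound above gives
\begin{equation*}
\|\curl(\bs w-\bs w_h)\|_0 \le \|\curl\bs w-\Pi_h^{\rm RT}(\curl\bs w)\|_0 \lesssim h\,|\curl\bs w|_1,
\end{equation*}
where the last inequality is the standard Raviart--Thomas interpolation estimate, which is valid precisely under the hypothesis $\curl\bs w\in\bs H^1(\Omega;\mathbb R^3)$.

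The main obstacle is the well-definedness of $\Pi_h^c\bs w$, since the canonical Nédélec interpolant is not bounded on $\bs H(\curl,\Omega)$ alone. I would resolve this by noting that $\bs w$ inherits extra regularity from the problem: $\bs w\in\bs H_0(\curl,\Omega)$, $\div\bs w=0$, and $\curl\bs w\in\bs H^1$ together imply $\bs w$ has sufficient fractional Sobolev regularity for the edge degrees of freedom to be well defined; alternatively, one may replace $\Pi_h^c$ with a smoothed quasi-interpolant of Schöberl/Christiansen--Winther type that preserves the commuting diagram and requires only $\bs H(\curl)$ regularity, which leaves the above chain of inequalities unchanged.
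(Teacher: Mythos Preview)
Your proof is correct and follows the same approach as the paper: Galerkin orthogonality \eqref{eq:GalerkinOrthcurl} yields the best-approximation property in the $\curl$-seminorm, and then an interpolation estimate finishes the job. The paper simply cites an interpolation error estimate from the literature, whereas you spell out the commuting-diagram argument with the Raviart--Thomas interpolant and address the well-definedness of $\Pi_h^c$; this extra care is reasonable but not required beyond what the paper does.
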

\begin{proof}
The orthogonality \eqref{eq:GalerkinOrthcurl} implies the best approximation
%Then we have for any $\bs v_h\in \bs V_h^c$ that
%\begin{align*}
%\|\curl (\bs w-\bs w_h)\|_0^2&=(\curl (\bs w-\bs w_h), \curl (\bs w-\bs w_h))=(\curl (\bs w-\bs w_h), \curl (\bs w-\bs v_h)) \\
%&\leq \|\curl (\bs w-\bs w_h)\|_0\|\curl (\bs w-\bs v_h)\|_0,
%\end{align*}
%which implies
\[
\|\curl (\bs w-\bs w_h)\|_0\leq \inf_{\bs v_h\in \bs V_h^c} \|\curl (\bs w-\bs v_h)\|_0.
\]
This gives \eqref{eq:errorpriorwh} by an interpolation error estimate (see e.g., \cite{Monk2003}).
\end{proof}

According to the Poincar\'e-Friedrichs inequality for piecewise $H^1$ functions \cite{Brenner2003}, the following inequality holds
\begin{equation*}%\label{eq:poincareVhs}
\|\psi_h\|_0\lesssim |\psi_h|_{1,h}\quad\forall~\psi_h\in \bs V_h^{\rm CR}+\boldsymbol H_0^1(\Omega; \mathbb{R}^3).
\end{equation*}
Denote $\bs I_h^{s}$ as the nodal interpolation operator from $\boldsymbol H_0^1(\Omega; \mathbb{R}^3)$ to $\bs V_h^{\rm CR}$, then
\begin{equation}\label{eq:errorestimateIhs1}
(\bs\nabla(\bs\psi-\bs I_h^{s} \bs\psi), \bs\tau)_K=0\quad\forall~\bs\psi\in\boldsymbol H^1(\Omega; \mathbb{R}^3), \bs\tau\in\mathbb P_0(K;\mathbb M), \; \; K\in\mathcal T_h,
\end{equation}
where $P_0(K;\mathbb M)$ stands for the space of constant $3\times 3$ matrix on $K$, and for $j=1,2$,
%\[
%\div_h(\bs I_h^{s}\bs\psi)= Q_h(\div\bs\psi)\quad\forall~\bs\psi\in\boldsymbol H^1(\Omega; \mathbb{R}^3),
%\]
\begin{equation}\label{eq:errorestimateIhs}
\|\bs\psi-\bs I_h^{s}\bs\psi\|_{0,K}+h_K|\bs\psi-\bs I_h^{s}\bs\psi|_{1,K}\lesssim h_K^j|\bs\psi|_{j,K}\quad \forall~\bs\psi\in\boldsymbol H^j(\Omega; \mathbb{R}^3), \;\; K\in\mathcal T_h.
\end{equation}

The error analysis for the nonconforming $P_1$-$P_0$ element approximation \eqref{eq:mfem3}--\eqref{eq:mfem4} of Stokes equation is standard \cite{CrouzeixRaviart1973}. Using the decoupled system to approximate the quad--curl problem, the subtlety is the perturbation of data. We shall present a stability result for using $\curl \bs w_h$ to approximate $\curl \bs w$. To this end, we introduce the space 
$$
\bs Z := \boldsymbol{H}_0^1(\Omega; \mathbb R^3)\cap \ker(\div).
$$ 
Subsequently \eqref{eq:quarticcurldecouple3} and the continuous problem using the perturbed data can be written as follows: 
\[
- \Delta \bs \phi = \curl \bs w \;\text{ in }\; \bs Z'
\quad \text{and} \quad -\Delta \tilde{\bs \phi} = \curl \bs w_h \;\text{ in } \bs Z',
\]
respectively. The second problem above is equivalent to 
\begin{align}
(\boldsymbol\nabla\tilde{\bs \phi}, \boldsymbol\nabla\boldsymbol\psi) + (\div\boldsymbol\psi, \tilde{p}) & =(\curl  \bs w_h, \boldsymbol\psi) \quad\;\;\, \forall~\boldsymbol{\psi}\in  \boldsymbol H_0^1(\Omega; \mathbb{R}^3), 
\label{eq:quarticcurldecouple-perturbed1}
\\
(\div\tilde{\bs \phi}, q) &= 0 \quad\quad\quad\quad\quad\;\;\;\;\; \forall~ q\in L_0^{2}(\Omega). 
\label{eq:quarticcurldecouple-perturbed2}
\end{align}
The analysis is performed for this problem with the perturbed data.

\begin{lemma}
\label{lem:estimate-phitilde}
 Let $(\bs\phi, p), (\tilde{\bs \phi}, \tilde p)$ be the solutions to \eqref{eq:quarticcurldecouple3}--\eqref{eq:quarticcurldecouple4} and \eqref{eq:quarticcurldecouple-perturbed1}--\eqref{eq:quarticcurldecouple-perturbed2}, respectively, where $\bs w$ and $\bs w_h$ satisfy the orthogonality \eqref{eq:GalerkinOrthcurl}. Then 
%$$\| \tilde{\bs \phi} - \bs \phi\|_1 + \| \tilde p - p\|_0 \lesssim h \| \curl (\bs w - \bs w_h)\|_0.$$
$$\| \tilde{\bs \phi} - \bs \phi\|_1 \lesssim h \| \curl (\bs w - \bs w_h)\|_0.$$
\end{lemma}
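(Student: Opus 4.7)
The plan is to test the difference equation against $\bs e := \tilde{\bs\phi} - \bs\phi$, exploit that $\bs e$ is exactly divergence-free, and then harvest an extra factor of $h$ from the Galerkin orthogonality \eqref{eq:GalerkinOrthcurl}. Subtracting \eqref{eq:quarticcurldecouple3}--\eqref{eq:quarticcurldecouple4} from \eqref{eq:quarticcurldecouple-perturbed1}--\eqref{eq:quarticcurldecouple-perturbed2}, the pair $(\bs e, \tilde p - p)$ solves a Stokes problem with right-hand side $\curl(\bs w_h - \bs w)$, and since both $\bs\phi$ and $\tilde{\bs\phi}$ are exactly divergence-free we have $\div\bs e = 0$. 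Testing with $\bs\psi = \bs e$ in the residual equation yields
\[
\|\nabla\bs e\|_0^2 = (\curl(\bs w_h - \bs w),\, \bs e).
\]
A direct Cauchy--Schwarz on the right-hand side would only produce $\|\nabla\bs e\|_0 \lesssim \|\curl(\bs w - \bs w_h)\|_0$; the missing factor of $h$ must come from \eqref{eq:GalerkinOrthcurl}.

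To invoke it, I would seek some $\bs v_h \in \bs V_h^c$ with $\curl\bs v_h$ within $O(h)$ of $\bs e$ in $\bs L^2$. Because $\bs e \in \bs H_0^1(\Omega;\mathbb R^3) \cap \ker(\div)$, and in particular $\bs e \cdot \bs n = 0$ on $\partial\Omega$, the lowest-order Raviart--Thomas interpolant $\bs I_h^{\rm RT}\bs e$ is well defined, lies in the boundary-conforming lowest-order Raviart--Thomas space, inherits the divergence-free property through the commuting relation with the $L^2$-projection onto piecewise constants, and satisfies $\|\bs e - \bs I_h^{\rm RT}\bs e\|_0 \lesssim h\|\bs e\|_1$. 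Because $\Omega$ is homomorphic to a ball, the discrete de~Rham sequence with essential boundary conditions is exact, so the divergence-free part of that boundary-conforming Raviart--Thomas space coincides with $\curl\bs V_h^c$. Thus $\bs I_h^{\rm RT}\bs e = \curl\bs v_h$ for some $\bs v_h \in \bs V_h^c$.

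Substituting the splitting $\bs e = \curl\bs v_h + (\bs e - \bs I_h^{\rm RT}\bs e)$ into the identity and killing the piece on $\curl\bs v_h$ via \eqref{eq:GalerkinOrthcurl} gives
\[
\|\nabla\bs e\|_0^2 = (\curl(\bs w_h - \bs w),\, \bs e - \bs I_h^{\rm RT}\bs e) \leq \|\curl(\bs w_h - \bs w)\|_0 \|\bs e - \bs I_h^{\rm RT}\bs e\|_0.
\]
Chaining with the interpolation estimate and the Poincaré inequality $\|\bs e\|_1 \lesssim \|\nabla\bs e\|_0$ (which holds since $\bs e \in \bs H_0^1$) delivers $\|\nabla\bs e\|_0 \lesssim h\|\curl(\bs w - \bs w_h)\|_0$, and hence the claim.

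The main obstacle is the identification $\bs I_h^{\rm RT}\bs e = \curl\bs v_h$: it rests on the exactness of the discrete de~Rham sequence with essential boundary conditions, which is where the topological hypothesis that $\Omega$ be homomorphic to a ball enters the argument. An equally valid dual route would work with a continuous regular decomposition $\bs e = \curl\bs\psi$ for some $\bs\psi \in \bs H_0(\curl,\Omega) \cap \bs H^1$ and then apply the N\'ed\'elec interpolant of $\bs\psi$ together with Galerkin orthogonality; that simply shifts the technical burden onto the construction of the vector potential $\bs\psi$ with the correct boundary conditions.
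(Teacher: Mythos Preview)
Your proof is correct, and it takes a route that is dual to the paper's. The paper represents $\bs e=\tilde{\bs\phi}-\bs\phi$ as the curl of a \emph{continuous} vector potential $\bs v\in\bs H_0^1(\Omega;\mathbb R^3)$ (via the Girault--Raviart result for divergence-free $H_0^1$ fields), then inserts a N\'ed\'elec interpolant of $\bs v$ and uses the orthogonality \eqref{eq:GalerkinOrthcurl} together with the estimate $\|\curl(\bs v-\bs v_h)\|_0\lesssim h|\curl\bs v|_1=h|\bs e|_1$. You instead interpolate $\bs e$ directly into the Raviart--Thomas space, invoke the commuting diagram and discrete de~Rham exactness to write $\bs I_h^{\rm RT}\bs e=\curl\bs v_h$, and then exploit orthogonality and the RT approximation estimate. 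Both arguments hinge on the same topological assumption (simply connected $\Omega$); the paper places it on the continuous side (existence of a vector potential with the right boundary behaviour), while you place it on the discrete side (exactness of the sequence $\bs V_h^c\xrightarrow{\curl}\bs V_h^d\xrightarrow{\div}\mathcal Q_h$). Your version has the minor advantage that the RT interpolant is well defined under weaker regularity than the N\'ed\'elec interpolant, though that is not an issue here since $\bs v\in\bs H^1$ with $\curl\bs v=\bs e\in\bs H^1$. Amusingly, the ``dual route'' you sketch in your final paragraph is precisely what the paper does.
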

\begin{proof}
The difference between the two pairs satisfies the Stokes equation
\begin{align*}
-\Delta (\bs \phi - \tilde{\bs \phi}) + \nabla (p - \tilde p) &= \curl (\bs w - \bs w_h) \quad\text{ in } (\bs H_0^1(\Omega; \mathbb R^3))',\\
\div (\bs \phi - \tilde{\bs \phi}) &= 0 \qquad\qquad\qquad\;\, \text{ in } L_0^{2}(\Omega).
\end{align*}
Applying the definition of the duality pair testing against 
$\bs \phi - \tilde{\bs \phi}$, we get
\[
|\bs \phi - \tilde{\bs \phi}|_1^2=(\curl (\bs w - \bs w_h), \bs \phi - \tilde{\bs \phi}).
\]
Moreover, since $\div(\bs \phi - \tilde{\bs \phi})=0$, by \cite[Chapter 1 Theorem 3.4]{Girault;Raviart:1986Finite} there exists $\bs v\in \bs H_0^1(\Omega; \mathbb R^3)$ such that
\[
\bs \phi - \tilde{\bs \phi}=\curl\bs v,\quad \|\bs v\|_1\lesssim \|\bs \phi - \tilde{\bs \phi}\|_0.
\]
Then it follows from \eqref{eq:GalerkinOrthcurl} that
\[
|\bs \phi - \tilde{\bs \phi}|_1^2=(\curl (\bs w - \bs w_h), \curl\bs v)=(\curl (\bs w - \bs w_h), \curl(\bs v-\bs v_h)), \quad\forall~\bs v_h\in \bs V_h^c.
\]
Thus
\[
|\bs \phi - \tilde{\bs \phi}|_1^2\leq \|\curl (\bs w - \bs w_h)\|_0\inf_{\bs v_h\in \bs V_h^c}\|\curl (\bs v - \bs v_h)\|_0\lesssim h\|\curl (\bs w - \bs w_h)\|_0|\bs \phi - \tilde{\bs \phi}|_1,
\]
which implies the desired result.
\end{proof}
%\LC{Do we need the estimate of $p$?}

In the next step, we treat $\bs \phi_h$ as the approximation of $\tilde{\bs \phi}$ and use the standard error analysis to obtain the following estimate. Here the $H^2$-regularity of Stokes equation is assumed to hold.  

\smallskip
\begin{itemize}[leftmargin=2.5em]
  \item[(H2)] \label{assumption:h2} Given an $\bs f\in \boldsymbol{L}^2(\Omega; \mathbb R^3)$, let $\bs u\in \bs H_0^1(\Omega; \mathbb R^3)$ and $p \in L^2_0(\Omega)$ be the solution of the Stokes equation 
$$
-\Delta\boldsymbol{u} + \nabla p = \bs f, \quad  \div \bs u = 0. 
$$
Then $\bs u\in \bs H^2(\Omega; \mathbb R^3)$ and $p\in H^1(\Omega)$ and 
$$
\|\bs u\|_2 +\|p\|_1\lesssim \|f\|_0.
$$
\end{itemize}

It is well known that the assumption \hyperref[assumption:h2]{(H2)} holds for smooth or convex domain $\Omega$ (e.g., see \cite[Section 11.5]{Mazya;Rossmann:2010Elliptic}). In particular, assuming \hyperref[assumption:h2]{(H2)} holds, we have
$$
\|\bs \phi\|_2+\|p\|_1\lesssim\|\curl \bs w\|_0,
\quad \|\tilde{\bs \phi}\|_2+\|\tilde{p}\|_1\lesssim\|\curl \bs w_h\|_0,
$$
thus the standard a priori estimate for the stable nonconforming $P_1$-$P_0$ pair holds.

% \LC{State out the regularity result of Stokes equation.}
% \LC{Recall the result below and cite a reference } 
%and a continuity argument is used to bound $\|\tilde{\bs\phi}\|_2$ by $\|\curl \bs w\|_0$ 
%\mnote{\ Here by the a stability estimate we only have $\|\tilde{\bs\phi}\|_2\lesssim 
%\|\curl \bs w_h\|_0$ on convex or smooth $\Omega$?}. 

\begin{theorem}
\label{theorem:errorpriorphih}
Let $(\bs \phi, p)\in \boldsymbol H_0^1(\Omega; \mathbb{R}^3)\times L_0^{2}(\Omega)$ be the solution of the Stokes equation \eqref{eq:quarticcurldecouple3}-\eqref{eq:quarticcurldecouple4}, and $(\bs \phi_h, p_h)\in \bs V_h^{\rm CR}\times \mathcal Q_h$ the solution of the mixed method \eqref{eq:mfem3}-\eqref{eq:mfem4}.
Assume the $H^2$-regularity of Stokes equation, i.e., \hyperref[assumption:h2]{(H2)} holds, then
\begin{equation}\label{eq:errorpriorphih}
%|\bs \phi-\bs \phi_h|_{1,h}+\|p-p_h\|_0\lesssim h \|\curl\bs w \|_0.
|\bs \phi-\bs \phi_h|_{1,h}\lesssim h \|\curl\bs w \|_0.
\end{equation}
\end{theorem}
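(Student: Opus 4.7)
The plan is to split the error between the continuous solution $\bs\phi$ of the Stokes problem \eqref{eq:quarticcurldecouple3}--\eqref{eq:quarticcurldecouple4} and its discrete CR approximation $\bs\phi_h$ via the auxiliary velocity $\tilde{\bs\phi}$ solving \eqref{eq:quarticcurldecouple-perturbed1}--\eqref{eq:quarticcurldecouple-perturbed2} with the perturbed datum $\curl\bs w_h$. The triangle inequality then gives
\[
|\bs\phi - \bs\phi_h|_{1,h} \;\leq\; |\bs\phi - \tilde{\bs\phi}|_1 + |\tilde{\bs\phi} - \bs\phi_h|_{1,h},
\]
so the task reduces to estimating each piece separately; the whole point of introducing $\tilde{\bs\phi}$ is precisely that $\bs\phi_h$ is the CR Galerkin approximation of $\tilde{\bs\phi}$, not of $\bs\phi$.

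For the first piece I would directly invoke Lemma \ref{lem:estimate-phitilde}, which yields $|\bs\phi - \tilde{\bs\phi}|_1 \lesssim h\|\curl(\bs w-\bs w_h)\|_0$. The latter is then controlled by $\|\curl\bs w\|_0$ either by the triangle inequality combined with the stability bound $\|\curl\bs w_h\|_0 \lesssim \|\bs f\|_0 \lesssim \|\curl\bs w\|_0$ (obtained by testing \eqref{eq:mfem1} with $\bs v_h = \bs w_h$ and using the discrete Poincar\'e inequality in the divergence-free subspace), or, if $\curl\bs w\in\bs H^1$ is available through (H2), by Lemma \ref{lem:estimate-w}, which yields a superconvergent contribution $h^2|\curl\bs w|_1$. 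Either way this piece is $\lesssim h\|\curl\bs w\|_0$.

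For the second piece, $\bs\phi_h$ is the genuine $\bs V_h^{\rm CR}$--$\mathcal Q_h$ Galerkin approximation of the Stokes pair $(\tilde{\bs\phi},\tilde p)$, so the classical a priori analysis of Crouzeix--Raviart for Stokes applies verbatim. Using the inf-sup stability of $(\bs V_h^{\rm CR},\mathcal Q_h)$, a second Strang lemma, the interpolation bound \eqref{eq:errorestimateIhs} together with the orthogonality \eqref{eq:errorestimateIhs1} (to control the consistency error produced by the jump terms by a standard bubble/face-average argument), and the assumption (H2) applied to the perturbed problem, I obtain
\[
|\tilde{\bs\phi} - \bs\phi_h|_{1,h} \;\lesssim\; h\bigl(\|\tilde{\bs\phi}\|_2 + \|\tilde p\|_1\bigr) \;\lesssim\; h\|\curl\bs w_h\|_0 \;\lesssim\; h\|\curl\bs w\|_0,
\]
where in the last step I again use stability of the discrete Maxwell problem to absorb the perturbation.

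Combining the two bounds gives \eqref{eq:errorpriorphih}. The routine Crouzeix--Raviart Stokes estimate is standard and I would cite it rather than reproduce it; the only genuinely new ingredient is the data-perturbation step through $\tilde{\bs\phi}$, which is already packaged in Lemma \ref{lem:estimate-phitilde}. The only mild subtlety is to make sure the bound is expressed in terms of $\|\curl\bs w\|_0$ rather than $\|\curl\bs w_h\|_0$ or $\|\bs f\|_0$; this is a one-line application of the triangle inequality plus discrete stability of the Maxwell solver.
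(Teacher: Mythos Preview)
Your overall strategy matches the paper's proof exactly: split through the auxiliary Stokes solution $\tilde{\bs\phi}$, invoke Lemma~\ref{lem:estimate-phitilde} for $|\bs\phi-\tilde{\bs\phi}|_1$, and apply the standard Crouzeix--Raviart estimate together with (H2) for $|\tilde{\bs\phi}-\bs\phi_h|_{1,h}$.

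There is one small slip. In your justification that $\|\curl\bs w_h\|_0$ (and hence $\|\curl(\bs w-\bs w_h)\|_0$) is controlled by $\|\curl\bs w\|_0$, you write $\|\curl\bs w_h\|_0\lesssim\|\bs f\|_0\lesssim\|\curl\bs w\|_0$. The second inequality is false in general: since $\bs f=\curl\curl\bs w$, $\|\bs f\|_0$ involves one more derivative than $\|\curl\bs w\|_0$ and cannot be bounded by it. The paper avoids this entirely by using the Galerkin orthogonality~\eqref{eq:GalerkinOrthcurl}: because $\bs w_h$ is the orthogonal projection of $\bs w$ in the $\curl$-seminorm, the Pythagorean identity gives $\|\curl\bs w_h\|_0\leq\|\curl\bs w\|_0$ and $\|\curl(\bs w-\bs w_h)\|_0\leq\|\curl\bs w\|_0$ immediately. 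Replace your stability-via-$\bs f$ step by this one-line orthogonality argument and the proof is complete and identical to the paper's.
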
 
\begin{proof}
First by a standard estimate \cite{CrouzeixRaviart1973}, and the elliptic regularity estimate of the approximation $\bs \phi_h$ for the Stokes problem with $\curl \bs w_h$ as data, we have
\[
|\widetilde{\bs\phi} -\bs \phi_h|_{1,h} \lesssim 
h  |\widetilde{\bs\phi}|_{2} \lesssim h \|\curl {\bs w}_h \|_0.
\]
%Furthermore, by an energy estimate of problem \eqref{eq:mfem1}--\eqref{eq:mfem2}, and a Poincar\'{e}-type inequality for $\bs w_h$ which discrete divergence free
%\[
%\|\curl {\bs w}_h \|_0^2 = (\bs f,\bs w_h)\leq \|\bs f\|_0 \, \|\bs w_h\|_0\leq
% \|\bs f\|_0 \, \|\curl \bs w_h\|_0,
%\]
%thus combining the estimate above with those in Lemma \ref{lem:estimate-phitilde} and \ref{lem:estimate-w} yields the result.
%\[
%|\bs \phi-\bs \phi_h|_{1,h}+\|p-p_h\|_0\lesssim h \left(\|\curl\bs w\|_0 + \|\bs f\|_0 \right)
%\]
%\SC{The estimate I can come up with has an extra term than Long's.}
Furthermore, as ${\bs w}_h$ is the projection of $\bs w$ to the discrete space in the energy norm, from the orthogonality \eqref{eq:GalerkinOrthcurl}, we have $\|\curl {\bs w}_h \|_0 \leq \|\curl {\bs w} \|_0$. Consequently, the theorem follows from combining the estimate with the ones in Lemma \ref{lem:estimate-phitilde}.
\end{proof}

\begin{remark}[Nonhomogeneous boundary conditions]
\label{rmk:nonhomogeneous-bc}
By a simple density argument we can see that $\curl \bs u\cdot \bs n = \div_{\Gamma} (\bs u\times \bs n)$. Consequently, the presence of nonhomogeneous $\bs u\times\bs n$ and/or $\curl \bs u\times \bs n$ leads to the necessity of impose compatible Dirichlet boundary conditions with the divergence free condition for problems \eqref{eq:quarticcurldecouple3}--\eqref{eq:quarticcurldecouple4}, \eqref{eq:quarticcurldecouple-perturbed1}--\eqref{eq:quarticcurldecouple-perturbed2}. Let $\bs \phi_I$ be the standard nodal interpolation in Crouzeix-Raviart element of a sufficiently smooth $\bs\phi$, by a standard decomposition argument we can see that aside from the terms on the right hand side of \eqref{eq:errorpriorphih}, for the nonhomogeneous boundary condition, the estimate should include:
\[
| \bs\phi - \bs \phi_I|_{1/2,h, \partial \Omega}:= \left(\sum_{F\in \mathcal{F}_h}
| \bs\phi - \bs \phi_I|_{1/2,F}^2\right)^{1/2}
 \lesssim h |\bs\phi|_2.
\]
%\LC{ if the regularity holds, $ |\bs\phi|_2\lesssim \|\curl w\|$}
%\SC{$\bs w$ may be zero while $\bs \phi$ is not. Consider a nontrivial biharmonic potential $\mu$ such that $\Delta^2 \mu=0$, and $\bs u = (0,0,\mu)$, then $\bs f = (0,0,\Delta^2 \mu)$ implies $\bs w=0$.}.

\end{remark}

Next we present the $L^2$-error estimate for the Stokes equation. 
\begin{lemma}
Let $(\bs w_h, 0, \bs \phi_h, p_h)\in \bs V_h^c\times V_h^1\times\bs V_h^{\rm CR}\times \mathcal Q_h$ be the solution of the mixed method \eqref{eq:mfem1}-\eqref{eq:mfem4} on triangulation $\mathcal T_h$. Assume $H^2$-regularity of Stokes equation holds, i.e., \hyperref[assumption:h2]{(H2)} holds, then 
%We have
\begin{equation}\label{eq:phihL2error}
\|\bs\phi-\bs\phi_h\|_0\lesssim h|\bs\phi-\bs\phi_h|_{1,h}+h\|\curl(\bs w-\bs w_h)\|_0 + h^2\|\curl\bs w\|_0.
\end{equation}
Furthermore if $\curl\bs w\in \bs H^1(\Omega; \mathbb R^3)$, then we have the second order estimate
\begin{equation}
\label{eq:phihL2error-rate}
\|\bs\phi-\bs\phi_h\|_0\lesssim h^2 \| \curl \bs w \|_1.
\end{equation} 
\end{lemma}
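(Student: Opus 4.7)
The strategy is an Aubin--Nitsche duality argument for the nonconforming $P_1$-$P_0$ pair, routed through the auxiliary solution $\tilde{\bs\phi}$ of the perturbed Stokes problem \eqref{eq:quarticcurldecouple-perturbed1}--\eqref{eq:quarticcurldecouple-perturbed2} so that the data perturbation and the pure discretization error separate cleanly. The starting point is the triangle inequality
\[
\|\bs\phi - \bs\phi_h\|_0 \leq \|\bs\phi - \tilde{\bs\phi}\|_0 + \|\tilde{\bs\phi} - \bs\phi_h\|_0,
\]
where the first summand is immediately controlled by Lemma \ref{lem:estimate-phitilde} combined with the Poincar\'e inequality: $\|\bs\phi - \tilde{\bs\phi}\|_0 \lesssim |\bs\phi - \tilde{\bs\phi}|_1 \lesssim h\|\curl(\bs w - \bs w_h)\|_0$, which accounts for the middle term on the right-hand side of \eqref{eq:phihL2error}.

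For the second summand I would dualize against $-\Delta \bs z + \nabla r = \tilde{\bs\phi} - \bs\phi_h$ with $\div\bs z = 0$ and $\bs z|_{\partial\Omega}=\bs 0$, so that assumption (H2) delivers $\|\bs z\|_2 + \|r\|_1 \lesssim \|\tilde{\bs\phi} - \bs\phi_h\|_0$. Expanding $\|\tilde{\bs\phi} - \bs\phi_h\|_0^2 = (\tilde{\bs\phi} - \bs\phi_h, -\Delta\bs z + \nabla r)$ and integrating by parts element-wise generates a bulk piece plus face-jump sums of the form $\sum_{F\in\mathcal F_h}\int_F\llbracket\bs\phi_h\rrbracket\cdot(\partial_n\bs z - r\bs n)$. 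The jumps are tamed using the Crouzeix--Raviart moment condition $\int_F\llbracket\bs\phi_h\rrbracket=0$ (respectively $\int_F\bs\phi_h=0$ on boundary faces) to subtract face averages $\overline{\partial_n\bs z}_F$ and $\bar r_F$; the standard trace estimate $\|\eta - \bar\eta_F\|_{0,F}\lesssim h_F^{1/2}|\eta|_{1,K}$ then yields the bound $h|\tilde{\bs\phi} - \bs\phi_h|_{1,h}(\|\bs z\|_2 + \|r\|_1)$. For the bulk piece I would insert the CR interpolant $\bs z_I = \bs I_h^{s}\bs z$: the difference $(\nabla_h(\tilde{\bs\phi} - \bs\phi_h), \nabla(\bs z - \bs z_I))$ is $O(h)|\tilde{\bs\phi} - \bs\phi_h|_{1,h}\|\bs z\|_2$ by \eqref{eq:errorestimateIhs}, while $(\nabla_h(\tilde{\bs\phi} - \bs\phi_h), \nabla\bs z_I)$ is expanded using the discrete equation \eqref{eq:mfem3} and the strong form of the PDE for $\tilde{\bs\phi}$, reducing to a further pair of face jumps of order $h^2|\tilde{\bs\phi}|_2\|\bs z\|_2$. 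The pressure-velocity coupling $(\div_h(\tilde{\bs\phi} - \bs\phi_h), r)$ collapses to leading order because $\div_h\bs\phi_h \in \mathcal Q_h$ together with \eqref{eq:mfem4} forces $\div_h\bs\phi_h \equiv 0$ pointwise on $\Omega$, the remainder again being $O(h^2)$. Applying (H2) in the form $\|\tilde{\bs\phi}\|_2 \lesssim \|\curl\bs w_h\|_0 \leq \|\curl\bs w\|_0$ (the last step from \eqref{eq:GalerkinOrthcurl} with $\bs v_h = \bs w_h$) then gives
\[
\|\tilde{\bs\phi} - \bs\phi_h\|_0 \lesssim h|\tilde{\bs\phi} - \bs\phi_h|_{1,h} + h^2\|\curl\bs w\|_0,
\]
and a final triangle inequality $|\tilde{\bs\phi} - \bs\phi_h|_{1,h}\leq|\bs\phi - \bs\phi_h|_{1,h} + |\bs\phi - \tilde{\bs\phi}|_1$ combined with Lemma \ref{lem:estimate-phitilde} assembles \eqref{eq:phihL2error}. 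The sharpened rate \eqref{eq:phihL2error-rate} then follows by substituting Lemma \ref{lem:estimate-w} ($\|\curl(\bs w - \bs w_h)\|_0\lesssim h|\curl\bs w|_1$) and Theorem \ref{theorem:errorpriorphih} ($|\bs\phi - \bs\phi_h|_{1,h}\lesssim h\|\curl\bs w\|_0$) into \eqref{eq:phihL2error} and collecting the $h^2$-order terms against $\|\curl\bs w\|_1$.

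The principal obstacle is the tight bookkeeping required for the face-jump sums in the duality step. Each individual $\|\llbracket\bs\phi_h\rrbracket\|_{0,F}$ scales only as $O(h^{1/2})$, while for $\bs z\in \bs H^2(\Omega;\mathbb R^3)$ one only has $\|\partial_n\bs z\|_{0,F} = O(h^{-1/2})$, so a na\"ive face-by-face estimate leaves the jump sum at $O(1)$ -- enough to destroy both claimed rates. The needed $O(h)$ gain must be extracted by consistently inserting the piecewise-constant face averages of $\partial_n\bs z$ and $r$ and invoking the zero-moment CR condition, and the same careful subtraction has to be executed symmetrically on the velocity-Laplacian and on the pressure-velocity coupling. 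The exact divergence-free identity $\div_h\bs\phi_h \equiv 0$ plays an essential auxiliary role, preventing the pressure coupling from contributing an $O(1)$ piece before the face averages have been subtracted.
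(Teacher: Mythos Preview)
Your approach is correct and reaches \eqref{eq:phihL2error}--\eqref{eq:phihL2error-rate}, but it differs from the paper's in where the data perturbation is absorbed. The paper does \emph{not} route through $\tilde{\bs\phi}$: it dualizes directly on $\bs\phi-\bs\phi_h$, obtaining the bulk identity
\[
(\nabla_h(\bs\phi-\bs\phi_h),\nabla\hat{\bs\phi})
=(\curl(\bs w-\bs w_h),\hat{\bs\phi})+(\curl\bs w_h,\hat{\bs\phi}-\bs I_h^{s}\hat{\bs\phi}),
\]
where the CR interpolation property \eqref{eq:errorestimateIhs1} and $\div_h\bs I_h^{s}\hat{\bs\phi}=0$ are used in a single step (no face-jump detour for the bulk). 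The first summand is then handled by the stream-function trick of Lemma~\ref{lem:estimate-phitilde} applied \emph{inside} the duality, yielding $h\|\curl(\bs w-\bs w_h)\|_0|\hat{\bs\phi}|_1$; the second is immediately $h^2\|\curl\bs w\|_0|\hat{\bs\phi}|_2$. Your version instead invokes Lemma~\ref{lem:estimate-phitilde} up front via the splitting $\bs\phi-\bs\phi_h=(\bs\phi-\tilde{\bs\phi})+(\tilde{\bs\phi}-\bs\phi_h)$, so the subsequent duality is a ``clean'' CR $L^2$-estimate with matched data $\curl\bs w_h$ on both sides. That is a legitimate alternative, and it makes the separation of error sources more transparent; the trade-off is that your treatment of the bulk term (splitting off $\nabla(\bs z-\bs z_I)$ and then chasing further face jumps for $(\nabla_h(\tilde{\bs\phi}-\bs\phi_h),\nabla_h\bs z_I)$) is more circuitous than necessary. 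With your splitting, the same one-line CR identity the paper uses would give $(\nabla_h(\tilde{\bs\phi}-\bs\phi_h),\nabla\bs z)=(\curl\bs w_h,\bs z-\bs I_h^{s}\bs z)$ directly, eliminating the second layer of face jumps. Your derivation of \eqref{eq:phihL2error-rate} from \eqref{eq:phihL2error} via Lemma~\ref{lem:estimate-w} and Theorem~\ref{theorem:errorpriorphih} matches the paper's intent.
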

\begin{proof}
Consider the following dual problem: seek $(\hat{\bs \phi},\hat p)\in \bs H_0^1(\Omega; \mathbb R^3)\times L^2_0(\Omega)$ such that
\[
\left\{
\begin{aligned}
-\Delta \hat{\bs \phi} + \nabla \hat p &= \bs\phi-\bs\phi_h,\\
\div \hat{\bs \phi} &= 0.
\end{aligned}
\right.
\]
The $H^2$-regularity to the problem above (e.g., see \cite[Section 11.5]{Mazya;Rossmann:2010Elliptic}) reads
\begin{equation}\label{eq:dualstokesregularity}
\|\hat{\bs \phi}\|_2+\|\hat p\|_1\lesssim \|\bs\phi-\bs\phi_h\|_0.
\end{equation}
Since $\div_h(\bs\phi-\bs\phi_h)=0$, it follows
\begin{align}
\|\bs\phi-\bs\phi_h\|_0^2&=(\bs\phi-\bs\phi_h, -\Delta \hat{\bs \phi} + \nabla \hat p) \notag\\
&=(\nabla_h(\bs\phi-\bs\phi_h), \nabla\hat{\bs \phi}) + \sum_{K\in\mathcal T_h}(\bs\phi-\bs\phi_h, \hat p\bs n-\partial_n\hat{\bs \phi})_{\partial K}. \label{eq:20200918-1}
\end{align}
Employing \eqref{eq:errorestimateIhs1} and the fact $\div_h\bs I_h^{s}\hat{\bs \phi}=0$, we obtain
\begin{align*}
(\nabla_h(\bs\phi-\bs\phi_h), \nabla\hat{\bs \phi})&=(\nabla\bs\phi, \nabla\hat{\bs \phi})-(\nabla_h\bs\phi_h, \nabla_h\bs I_h^{s}\hat{\bs \phi}) =(\curl\bs w, \hat{\bs \phi})-(\curl\bs w_h, \bs I_h^{s}\hat{\bs \phi}) \\
&=(\curl\bs w-\curl\bs w_h, \hat{\bs \phi})+(\curl\bs w_h, \hat{\bs \phi}-\bs I_h^{s}\hat{\bs \phi}).
\end{align*}
Applying the same argument in Lemma~\ref{lem:estimate-phitilde} by treating $\hat{\bs \phi}$ as a stream function and inserting a curl of its interpolation,  we achieve
\[
(\curl\bs w-\curl\bs w_h, \hat{\bs \phi})\lesssim h\|\curl(\bs w-\bs w_h)\|_0|\hat{\bs \phi}|_1.
\]
Besides from \eqref{eq:errorestimateIhs}, we have 
\[
(\curl\bs w_h, \hat{\bs \phi}-\bs I_h^{s}\hat{\bs \phi})\lesssim h^2\|\curl\bs w\|_0|\hat{\bs \phi}|_2.
\]
Hence 
\begin{equation}\label{eq:20200918-2}
(\nabla_h(\bs\phi-\bs\phi_h), \nabla\hat{\bs \phi})\lesssim h\|\curl(\bs w-\bs w_h)\|_0|\hat{\bs \phi}|_1 + h^2\|\curl\bs w\|_0|\hat{\bs \phi}|_2.
\end{equation}
Due to the continuity condition of Crouzeix-Raviart element, %in \eqref{eq:space-cr}, 
by a standard technique of inserting a constant on each face (e.g., see \cite[Chapter 10.3]{Brenner;Scott:2008mathematical}) we get
\begin{equation}\label{eq:20200918-3}
\sum_{K\in\mathcal T_h}(\bs\phi-\bs\phi_h, \hat p\bs n-\partial_n\hat{\bs \phi})_{\partial K}\lesssim h|\bs\phi-\bs\phi_h|_{1,h}(\|\hat{\bs \phi}\|_2+\|\hat p\|_1).
\end{equation}
Combining \eqref{eq:20200918-1}-\eqref{eq:20200918-3} and \eqref{eq:dualstokesregularity} yields
\[
\|\bs\phi-\bs\phi_h\|_0\lesssim h|\bs\phi-\bs\phi_h|_{1,h}
+h\|\curl(\bs w-\bs w_h)\|_0 + h^2\|\curl\bs w\|_0,
\]
which is \eqref{eq:phihL2error}.
%
%Duality argument for  $\|\tilde{\bs\phi}-\bs\phi_h\|_0$ is standard. Then using the triangle inequality and the stability result.
%
%\LC{Can we remove $p - p_h$? Consider the duality argument in $Z$ and $Z_h$.}
\end{proof}

We now consider the approximation \eqref{eq:mfem5}--\eqref{eq:mfem6} of the last Maxwell equation. Due to the inexactness of the data, the orthogonality is lost, but the perturbation is measured in $L^2$-norm of the difference $ \boldsymbol{\phi}-\boldsymbol{\phi}_h$, which is controllable.

\begin{lemma}
\label{lem:convergence-curlu}
Let $(\bs u, 0)\in \bs H_0(\curl, \Omega)\times H_0^{1}(\Omega)$ be the solution of the Maxwell equation \eqref{eq:quarticcurldecouple5}-\eqref{eq:quarticcurldecouple6}, and $(\bs u_h, 0)\in \bs V_h^{c_1}\times V_h^{2}$ the solution of the mixed method \eqref{eq:mfem5}-\eqref{eq:mfem6}, then
%Assume $\curl\bs u, \curl\bs w  \in \boldsymbol H^1(\Omega; \mathbb{R}^3)$, $\bs \phi \in \boldsymbol H^2(\Omega; \mathbb{R}^3)$ and $p\in H^{1}(\Omega)$, then we have
\begin{equation}
\label{eq:errorprioruh}
\|\curl (\bs u-\bs u_h)\|_0\lesssim \|\boldsymbol{\phi}-\boldsymbol{\phi}_h\|_0+\inf_{\bs v_h\in \bs V_h^c} \|\curl (\bs u-\bs v_h)\|_0.
%h(|\curl\bs u|_1+|\curl\bs w|_1+|\boldsymbol\phi|_2+|p|_1).
\end{equation}
Assume that the $H^2$-regularity of Stokes equation \hyperref[assumption:h2]{(H2)} holds
and $\curl\bs w, \curl\bs u\in \bs H^1(\Omega; \mathbb R^3)$, then
\begin{equation*}
\|\curl (\bs u-\bs u_h)\|_0\lesssim h^2 | \curl \bs w |_1 + h | \curl \bs u |_1.
\end{equation*}
\end{lemma}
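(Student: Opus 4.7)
The plan is to mimic the standard Kikuchi analysis for Maxwell's equation but with a perturbation term accounting for the fact that the data on the right-hand side is $\bs \phi_h$ rather than $\bs \phi$. First I would observe that $\xi=0$ on the continuous side (because $\bs \phi=\curl \bs u$ is already in the range of curl, so testing \eqref{eq:quarticcurldecouple5} against $\nabla \zeta$ gives $\xi=0$) and $\xi_h=0$ on the discrete side (because $\nabla V_h^2 \subset \bs V_h^{c_1}$ and a similar argument applies, using that $(\boldsymbol{\phi}_h,\curl \nabla \zeta_h) = 0$). This reduces \eqref{eq:quarticcurldecouple5} and \eqref{eq:mfem5} to simply $(\curl \bs u,\curl \bs \chi) = (\bs\phi,\curl\bs\chi)$ and its discrete counterpart, and subtracting yields the perturbed orthogonality
\begin{equation*}
(\curl(\bs u - \bs u_h), \curl \bs \chi_h) = (\bs \phi - \bs \phi_h, \curl \bs \chi_h) \quad \forall~\bs \chi_h \in \bs V_h^{c_1}.
\end{equation*}

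Next, for any $\bs v_h \in \bs V_h^{c_1}$, I split $\curl(\bs u - \bs u_h) = \curl(\bs u - \bs v_h) + \curl(\bs v_h - \bs u_h)$, square, and exploit the perturbed orthogonality on the cross term with $\bs \chi_h = \bs v_h - \bs u_h$:
\begin{equation*}
\|\curl(\bs u - \bs u_h)\|_0^2 = (\curl(\bs u - \bs u_h),\curl(\bs u - \bs v_h)) + (\bs\phi-\bs\phi_h,\curl(\bs v_h - \bs u_h)).
\end{equation*}
Bounding the second term by $\|\bs\phi-\bs\phi_h\|_0(\|\curl(\bs u - \bs v_h)\|_0+\|\curl(\bs u - \bs u_h)\|_0)$ and applying Young's inequality to absorb the $\|\curl(\bs u - \bs u_h)\|_0$ contributions into the left-hand side produces
\begin{equation*}
\|\curl(\bs u - \bs u_h)\|_0 \lesssim \|\bs\phi-\bs\phi_h\|_0 + \|\curl(\bs u - \bs v_h)\|_0.
\end{equation*}
Since $\bs V_h^c \subset \bs V_h^{c_1}$, taking the infimum over $\bs v_h \in \bs V_h^c$ yields the first inequality in the statement.

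For the rate, I would apply the previously proven bound \eqref{eq:phihL2error-rate} to control $\|\bs\phi - \bs\phi_h\|_0 \lesssim h^2|\curl \bs w|_1$, and select $\bs v_h$ to be the canonical N\'ed\'elec interpolant of $\bs u$, invoking a standard interpolation estimate (e.g., \cite{Monk2003}) to obtain $\inf_{\bs v_h\in\bs V_h^c}\|\curl(\bs u - \bs v_h)\|_0 \lesssim h|\curl \bs u|_1$. Combining these two bounds delivers the claimed $h^2|\curl\bs w|_1 + h|\curl \bs u|_1$ estimate. I expect no substantial obstacle beyond verifying that $\xi_h=0$ and properly tracking the perturbation; everything else is a direct Strang-type argument paired with already-established lemmas.
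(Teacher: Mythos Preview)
Your proposal is correct and follows essentially the same approach as the paper: derive the perturbed Galerkin orthogonality \eqref{eq:orthogonality-uuh}, split $\curl(\bs u-\bs u_h)$ via an arbitrary $\bs v_h$, and bound by Cauchy--Schwarz and a triangle inequality. The paper's proof actually concludes with the infimum over $\bs V_h^{c_1}$ (slightly sharper) and leaves the second rate estimate implicit, whereas you spell out the passage to $\bs V_h^c$ and the application of \eqref{eq:phihL2error-rate} plus the N\'ed\'elec interpolation estimate; these are cosmetic differences, not substantive ones.
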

\begin{proof}
Subtracting \eqref{eq:mfem5} from \eqref{eq:quarticcurldecouple5}, we get
\begin{equation}
\label{eq:orthogonality-uuh}
(\curl (\bs u-\bs u_h), \curl \bs \chi_h)=(\boldsymbol{\phi}-\boldsymbol{\phi}_h, \curl \bs \chi_h)\quad \forall~\bs \chi_h\in \bs V_h^{c_1}.
\end{equation}
Taking $\bs \chi_h=\bs v_h- \bs u_h$ with $\bs v_h\in \bs V_h^{c_1}$, we acquire
\begin{align*}
\|\curl (\bs u-\bs u_h)\|_0^2&=(\curl (\bs u-\bs u_h), \curl (\bs u-\bs u_h)) \\
&=(\curl (\bs u-\bs u_h), \curl (\bs u-\bs v_h)) + (\boldsymbol{\phi}-\boldsymbol{\phi}_h, \curl(\bs v_h- \bs u_h)) \\
&\leq \|\curl (\bs u-\bs u_h)\|_0\|\curl (\bs u-\bs v_h)\|_0 \\
&\quad + \|\boldsymbol{\phi}-\boldsymbol{\phi}_h\|_0(\|\curl(\bs u- \bs v_h)\|_0+\|\curl(\bs u- \bs u_h)\|_0),
\end{align*}
which indicates
\[
\|\curl (\bs u-\bs u_h)\|_0\lesssim \|\boldsymbol{\phi}-\boldsymbol{\phi}_h\|_0
+\inf_{\bs v_h\in \bs V_h^{c_1}} \|\curl (\bs u-\bs v_h)\|_0.
\]
%Finally we achieve \eqref{eq:errorprioruh} from \eqref{eq:errorpriorphih} and the Poincar\'e inequality \eqref{eq:poincareVhs}.
\end{proof}

Recall that $\bs \phi = \curl \bs u$ and $\|\bs \phi - \bs \phi_h\|_0$ is at least first order $h$. Therefore if still merely the lowest order edge element is used in \eqref{eq:mfem5}--\eqref{eq:mfem6}, no approximation to $\curl \bs u$ better than $\bs \phi_h$ could be obtained. By the duality argument for Stokes equation, the error $\|\boldsymbol{\phi} - \boldsymbol{\phi}_h\|_0$ can be of second order $h^2$ if the $H^2$-regularity result holds. 
As a result in the last Maxwell equation, we opt to use the second family N\'ed\'elec element to improve the $L^2$ approximation of $\bs u$ to the second order.

% \LC{Present error analysis for $\| u - u_h \|$.}
\begin{theorem}
\label{theorem:convergence-u}
Let $(\bs u, 0)\in \bs H_0(\curl, \Omega)\times H_0^{1}(\Omega)$ be the solution of the Maxwell equation \eqref{eq:quarticcurldecouple5}-\eqref{eq:quarticcurldecouple6}, and $(\bs u_h, 0)\in \bs V_h^{c_1}\times V_h^{2}$ the solution of the mixed method \eqref{eq:mfem5}-\eqref{eq:mfem6}. Assume $\Omega$ is convex, then 
\begin{equation*}
  %  \| \bs u-\bs u_h \| \lesssim h\|\curl (\bs u-\bs u_h)\| + \|\boldsymbol{\phi}-\boldsymbol{\phi}_h\|.
\begin{aligned}
\|\bs u - \bs u_h\|_0 \lesssim
& \inf_{\bs v_h\in \bs V_h^{c_1}} \Big\{ \|\bs u - \bs v_h\|_0 + h\|\curl (\bs u - \bs v_h)\|_0 \Big\}
\\
& + h \|\curl (\bs u - \bs u_h)\|_0 + \|\bs \phi - \bs\phi_h\|_0,
\end{aligned}
\end{equation*}
and when $\curl\bs w\in \bs H^1(\Omega; \mathbb R^3)$ and $\bs u\in \bs H^2(\Omega; \mathbb R^3)$,
\begin{equation*}
\|\bs u-\bs u_h\|_0\lesssim h^2 (| \curl \bs w |_1 + \| \curl \bs u \|_1 + |\bs u|_2).
\end{equation*}

\end{theorem}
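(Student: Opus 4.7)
The plan is an Aubin--Nitsche duality argument, where the convexity of $\Omega$ supplies the $\bs H^1$-regularity of the Maxwell dual problem needed to upgrade the $\curl$-error of Lemma~\ref{lem:convergence-curlu} to an $L^2$-error. Set $\bs e := \bs u - \bs u_h$ and introduce the dual mixed problem: find $(\bs z, \eta) \in \bs H_0(\curl, \Omega) \times H_0^1(\Omega)$ satisfying $(\bs z, \nabla \zeta) = 0$ for all $\zeta \in H_0^1(\Omega)$ and
\[
(\curl \bs z, \curl \bs\chi) + (\nabla \eta, \bs\chi) = (\bs e, \bs\chi) \quad \forall\, \bs\chi \in \bs H_0(\curl, \Omega).
\]
On a convex $\Omega$ the standard Maxwell regularity yields $\bs z, \curl \bs z \in \bs H^1(\Omega; \mathbb R^3)$ and $\|\bs z\|_1 + \|\curl \bs z\|_1 + \|\nabla \eta\|_0 \lesssim \|\bs e\|_0$. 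Testing $\bs\chi = \bs e$ gives $\|\bs e\|_0^2 = (\curl \bs z, \curl \bs e) + (\nabla \eta, \bs e)$; the goal is to bound the right-hand side by $\|\bs e\|_0$ times the four terms appearing in the claim.

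The curl contribution is the standard Galerkin move. With the canonical N\'ed\'elec interpolant $\bs z_I \in \bs V_h^{c_1}$, the interpolation-error piece $(\curl(\bs z - \bs z_I), \curl \bs e)$ is controlled by $h\|\curl \bs z\|_1 \|\curl \bs e\|_0 \lesssim h\|\bs e\|_0 \|\curl(\bs u - \bs u_h)\|_0$, while $(\curl \bs z_I, \curl \bs e) = (\bs\phi - \bs\phi_h, \curl \bs z_I)$ by the perturbed Galerkin orthogonality \eqref{eq:orthogonality-uuh}, which gives a $\|\bs\phi - \bs\phi_h\|_0 \|\bs e\|_0$ bound.

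For the gradient contribution, I would split $\bs e = (\bs u - \bs v_h) - (\bs u_h - \bs v_h)$ for an arbitrary $\bs v_h \in \bs V_h^{c_1}$. The piece $(\nabla \eta, \bs u - \bs v_h)$ is immediate, giving $\|\nabla \eta\|_0 \|\bs u - \bs v_h\|_0 \lesssim \|\bs e\|_0 \|\bs u - \bs v_h\|_0$. The delicate piece $(\nabla \eta, \bs u_h - \bs v_h)$ relies on two orthogonalities: $(\bs u, \nabla \zeta) = 0$ for all $\zeta \in H_0^1(\Omega)$ (integration by parts and $\div \bs u = 0$), and the discrete constraint $(\bs u_h, \nabla V_h^2) = 0$. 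Inserting the Scott--Zhang interpolant $I_h \eta \in V_h^2$, these identities reduce $(\nabla \eta, \bs u_h - \bs v_h)$ to $(\nabla(\eta - I_h\eta), \bs u_h - \bs v_h) - (\nabla I_h\eta, \bs v_h - \bs u)$, the second term being bounded by $\|\bs e\|_0\|\bs u - \bs v_h\|_0$ via $\|\nabla I_h \eta\|_0 \lesssim \|\nabla\eta\|_0 \lesssim \|\bs e\|_0$. An element-wise integration by parts on the first term, combined with $\|\eta - I_h\eta\|_0 \lesssim h\|\nabla\eta\|_0 \lesssim h\|\bs e\|_0$ and a trace estimate, converts it into a sum of piecewise-divergence and normal-jump contributions of $\bs u_h - \bs v_h$ at order $h\|\bs e\|_0$ times suitable norms. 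Choosing $\bs v_h$ as a N\'ed\'elec-type interpolant of $\bs u$, the interpolant-side pieces are bounded by $h\|\curl(\bs u - \bs v_h)\|_0 + \|\bs u - \bs v_h\|_0$ via standard estimates, and the discrete-remainder pieces are absorbed into the left-hand side through an inverse inequality.

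The main obstacle lies precisely in this last manipulation: because $\bs u_h$ belongs to $\bs H_0(\curl, \Omega)$ but not to $\bs H(\div, \Omega)$, the weak right-hand side $\div \bs e$ for $\eta$ is merely in $H^{-1}(\Omega)$, so $\eta$ is only $H_0^1$-regular and the extra $h$ cannot be extracted through the convex $H^2$-regularity of Poisson. It must emerge from coupling the Scott--Zhang interpolation into $V_h^2$ with the discrete divergence constraint on $\bs u_h$. Once the abstract inequality is established, the $h^2$-rate follows by inserting $\|\bs u - \bs v_h\|_0 \lesssim h^2 |\bs u|_2$ and $\|\curl(\bs u - \bs v_h)\|_0 \lesssim h |\curl \bs u|_1$ for the N\'ed\'elec interpolant, together with Lemma~\ref{lem:convergence-curlu} and the $L^2$ Stokes estimate \eqref{eq:phihL2error-rate}.
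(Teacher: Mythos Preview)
Your duality argument has a genuine gap in the handling of the gradient term $(\nabla\eta,\bs e)$. Because $\bs e=\bs u-\bs u_h$ is only \emph{discretely} divergence-free, the multiplier $\eta$ in your dual problem is nonzero and enjoys no more than $H^1$-regularity. After your reductions the critical residual is $(\nabla(\eta-I_h\eta),\bs v_h-\bs u_h)$. Elementwise integration by parts produces volume terms $\sum_K(\eta-I_h\eta,\div(\bs v_h-\bs u_h))_K$ and normal-jump face terms; the only available bounds are the Scott--Zhang estimate $\|\eta-I_h\eta\|_{0,K}\lesssim h_K\|\nabla\eta\|_{0,\omega_K}$ (no better, since $\eta\notin H^2$) together with the inverse inequality $\|\div(\bs v_h-\bs u_h)\|_{0,K}\lesssim h_K^{-1}\|\bs v_h-\bs u_h\|_{0,K}$, and analogously on faces. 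The powers of $h$ cancel exactly, leaving a bound of the form $C\|\nabla\eta\|_0\|\bs v_h-\bs u_h\|_0\lesssim C\|\bs e\|_0(\|\bs u-\bs v_h\|_0+\|\bs e\|_0)$ with a shape-regularity constant $C$ that is \emph{not} small. The resulting $C\|\bs e\|_0^2$ on the right cannot be absorbed into the left-hand side, and the argument stalls. Your own caveat (``the main obstacle lies precisely in this last manipulation'') correctly identifies the difficulty, but the sketch you give does not overcome it.

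The paper sidesteps the issue by never applying duality to $\bs e$ directly. It writes $\|\bs e_h\|_0^2=(\bs e_h,\bs u-\bs v_h)+(\bs e_h,\bs v_h-\bs u_h)$, performs a \emph{discrete} Helmholtz decomposition $\bs v_h-\bs u_h=\bs s_h+\nabla q_h$ in $\bs V_h^{c_1}$ with $(\bs s_h,\nabla V_h^2)=0$, and uses the constraints \eqref{eq:quarticcurldecouple6} and \eqref{eq:mfem6} to kill $(\bs e_h,\nabla q_h)$. It then lifts $\bs s_h$ to a continuous $\bs s\in\bs H_0(\curl,\Omega)$ with $\curl\bs s=\curl\bs s_h$, $\div\bs s=0$, and $\|\bs s-\bs s_h\|_0\lesssim h\|\curl\bs s_h\|_0$, and runs the Aubin--Nitsche argument on $\bs s$ as the source. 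Since $\bs s$ is genuinely divergence-free, the dual Lagrange multiplier vanishes and the problematic gradient term never appears; the factor $h$ comes from the lifting error and the interpolation of the dual solution. This discrete-decomposition-plus-lifting step (adapted from \cite{Zhong;Shu;Wittum;Xu:2009Optimal}) is the missing ingredient in your proposal.
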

% \LC{Will type later.}
% \mnote{I think there should be a term $\inf_{\bs v_h\in \bs V_h^c} \|\bs u-\bs v_h\|_0$, if $\bs u$ is a pure gradient field then the right hand side is zero.
% }

\begin{proof}
The proof is adapted from a similar argument in \cite{Zhong;Shu;Wittum;Xu:2009Optimal} without the data perturbation. Denote $\bs{e}_h: = \bs u-\bs u_h$, then by \eqref{eq:quarticcurldecouple6} and \eqref{eq:mfem6}, we have $(\bs{e}_h, \nabla \zeta_h) = 0$ for $\zeta_h\in V_h^{2}$, thus for any fixed
$\bs{v}_h \in \bs V_h^{c_1}$
\[
\|\bs e_h\|_0^2 = (\bs e_h, \bs u-\bs v_h) + (\bs e_h, \bs s_h + \nabla q_h) 
= (\bs e_h, \bs u-\bs v_h) + (\bs e_h, \bs s_h),
\]
where a discrete Helmholtz decomposition
\begin{equation}
\label{eq:decomposition-discrete}
\bs v_h -\bs u_h = \bs s_h + \nabla q_h, \quad \text{and } (\bs s_h,\nabla r_h)=0, \quad \forall~r_h\in V_h^{2}
\end{equation}
is applied such that $\bs s_h\in \bs V_h^{c_1}$. As a result,
\begin{equation}
\label{eq:estimateL2-uuh}
\|\bs e_h\|_0 \lesssim \|\bs u-\bs v_h\|_0 + \| \bs s_h \|_0.
\end{equation}
An $\bs H(\curl)$-lifting $\bs s\in \bs H_0(\curl, \Omega)$ (see \cite[Lemma 7.6, Remark 3.52]{Monk2003}) of $\bs s_h$ is sought such that
\[
\curl \bs s = \curl \bs s_h, \quad \div \bs s = 0, \quad \text{ and }\;\;\;\; \|\bs s - \bs s_h\|_0\lesssim h \|\curl \bs s_h\|_0.
\] 
% with a $\delta \in [0,1/2]$ where $\delta = 1/2$ if $\Omega$ is convex or $C^{1,1}$-smooth. 
By the triangle inequality and \eqref{eq:decomposition-discrete}, 
\[
\|\bs s_h\|_0\leq \|\bs s\|_0 + \|\bs s - \bs s_h\|_0 \lesssim \|\bs s\|_0 + h \|\curl \bs s_h\|_0 = \|\bs s\|_0 + h \|\curl (\bs u_h - \bs v_h) \|_0,
\]
hence it suffices to bound $\|\bs s\|_0 $. Consequently, the Aubin-Nitsche argument is applied on $\bs s$, where we seek an  
$(\bs r, \xi) \in \bs H_0(\curl, \Omega)\times H_0^{1}(\Omega)$ s.t. 
\begin{align}
(\curl \bs  r, \curl \bs  \chi)+(\bs \chi,\nabla \xi)&= (\bs s, \bs \chi) \quad\quad \forall~\bs \chi\in \bs H_0(\curl, \Omega), 
\label{eq:auxpb1} \\
(\bs r,\nabla \zeta) &= 0 \qquad\qquad \forall~ \zeta\in H_0^{1}(\Omega). 
 \label{eq:auxpb2}
\end{align}
We have $\xi =0$ since $\bs s$ is divergence free, and letting $\bs \chi = \bs s$ yields
\begin{align*}
\|\bs s\|_0^2 & = (\curl \bs r,\curl \bs s) 
= \big(\curl \bs r, \curl (\bs s + \bs u_h - \bs v_h)\big) + 
\big(\curl \bs r, \curl (-\bs u_h + \bs v_h)\big)
\\
& = -\big(\curl \bs r, \curl (\nabla q_h)\big) + 
\big(\curl \bs r, \curl (\bs u -\bs u_h )\big) - \big(\curl \bs r, \curl (\bs u -\bs v_h )\big)
.
\end{align*}
By an embedding result (see \cite[Chapter 1 Section 3.4]{Girault;Raviart:1986Finite}), the N\'{e}d\'{e}lec nodal interpolation $\bs I_h^{c_1} \bs r$ is well-defined, inserting which into the first above, letting $\bs \chi = \bs u - \bs v_h$ in \eqref{eq:auxpb1}--\eqref{eq:auxpb2}, and by \eqref{eq:orthogonality-uuh}, we have 
\begin{align*}
\|\bs s\|_0^2
& = \big(\curl \bs e_h , \curl (\bs r - \bs I_h^{c_1} \bs r) \big)
+ (\curl \bs e_h , \curl \bs I_h^{c_1} \bs r) - (\bs s, \bs u - \bs v_h)
\\
& =  \big(\curl \bs e_h , \curl (\bs r - \bs I_h^{c_1} \bs r) \big)
+ (\bs \phi - \bs\phi_h, \curl \bs I_h^{c_1} \bs r) - (\bs s, \bs u - \bs v_h)
\\
&
\leq \|\curl (\bs r - \bs I_h^{c_1} \bs r)\|_0 \|\curl \bs e_h\|_0  + 
\|\bs \phi - \bs\phi_h\|_0 \|\curl \bs I_h^{c_1} \bs r\|_0 + \|\bs s\|_0 \|\bs u - \bs v_h\|_0.
\end{align*}
By standard approximation and stability estimates for the nodal interpolation, as well as a regularity estimate for problem \eqref{eq:auxpb1}--\eqref{eq:auxpb2}, we have
\[
\|\curl (\bs r - \bs I_h^{c_1} \bs r)\|_0 \lesssim h|\curl \bs r|_1\lesssim h\|\bs s\|_0
\quad \text{and } \|\curl \bs I_h^{c_1} \bs r\|_0 \lesssim \|\curl \bs r\|_1 \lesssim \|\bs s\|_0.
\]
As a result, we have
\[
\|\bs s\|_0 \lesssim h\|\curl \bs e_h\|_0 + \|\bs \phi - \bs\phi_h\|_0 + \|\bs u - \bs v_h\|_0.
\]
Lastly, the desired estimate follows from combining the estimates for $\|\bs s_h\|_0$ and $\|\bs s\|_0$ into \eqref{eq:estimateL2-uuh}.
\end{proof} 

%
%
% or use the quadratic first family Nedelec element to improve the $\curl$-norm to second order. In other words, we can treat the last Maxwell equation as a post-processing to get a better approximation $\bs u_h$ from $\bs \phi_h$.

%\LC{The $L^2$ error estimate of Maxwell equation is subtle. Check \cite{Zhong;Shu;Wittum;Xu:2009Optimal} where Liuqiang corrected a mistake in Hiptmair's $L^2$ error estimate. Not sure if it can be adapt to here}
%
%\LC{Also in the adaptivity, we can skip the last system and focus the first two. Once we get optimal order convergence for $\bs \phi$ and want a better $L^2$ approximation of $\bs u$, we solve the last Maxwell equation. }

\subsection{Numerical verification}
\label{sec:numerics-uniform}
In this section, we verify the a priori convergence results shown in the previous subsection. The first example has a smooth solution $\bs{u}(x,y,z) = \left\langle 0,0, (\sin x \,\sin y)^2 \sin z\right\rangle$ on $\Omega = (0,\pi)^3$. 
Because the true solution is not divergence free, problem \eqref{eq:quarticcurldecouple6} needs to be modified to $(\bs{u},\nabla \zeta) = (g, \zeta)$ with $g = \div \bs{u}$ being computed from the true solution, and the discretization changes accordingly. The domain $\Omega$ is partitioned into a uniform tetrahedral mesh, and the convergence plot is in Figure \ref{fig:ex1-uniform}. It can be seen that when $\bs{u}$ and $\curl \bs{u}$ are smooth, the rates of convergence of $|\boldsymbol{\phi}_I - \boldsymbol{\phi}_h|_{1,h}$ and $\|\bs{\phi} - \bs{\phi}_h\|_0$ are optimal, being $O(h)$ and $O(h^2)$, respectively. For the solution $u_h$ obtained from the last Maxwell equation, $\|\curl (\bs u-\bs u_h)\|_0$ is still $O(h)$ and the $L^2$ error $\| \bs u-\bs u_h \|_0$ is improved to $O(h^2)$.
\begin{figure}[htbp]
\begin{center}
\begin{subfigure}[b]{0.45\linewidth}
    \centering
    \includegraphics[width=0.9\textwidth]{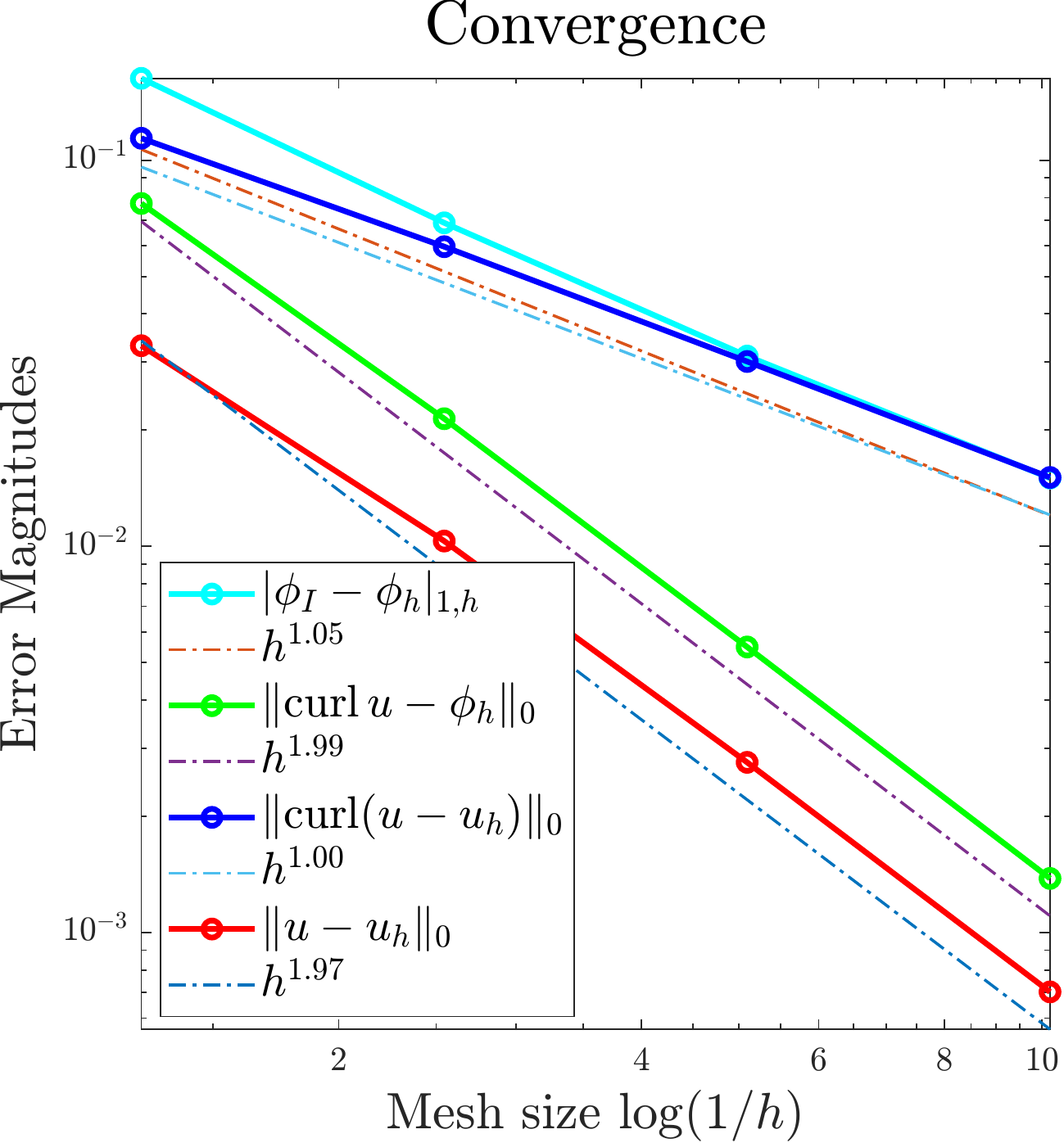}
    \caption{\label{fig:ex1-uniform}}
\end{subfigure}%
\hspace{0.1in}
\begin{subfigure}[b]{0.45\linewidth}
      \centering
      \includegraphics[width=0.9\textwidth]{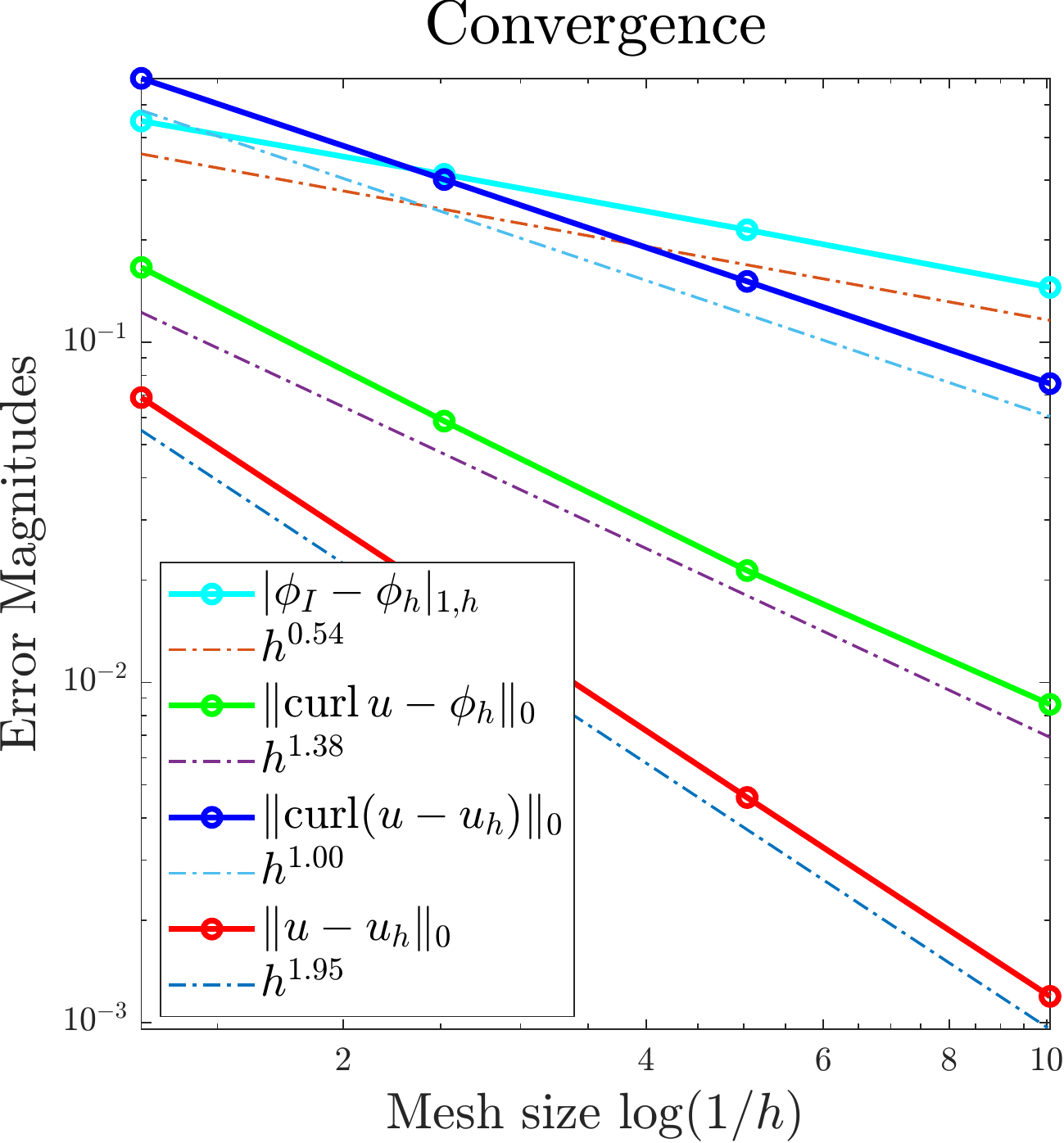}
      \caption{\label{fig:ex2-uniform}}
\end{subfigure}
\caption{On a uniformly refined mesh: (\subref{fig:ex1-uniform}) The convergence of approximating $\bs{u}(x,y,z) = \left\langle 0,0, (\sin x \,\sin y)^2 \sin z\right\rangle$. (\subref{fig:ex2-uniform}) The convergence of approximating 
$\bs{u}(x,y,z) = \curl \left\langle 0,0, r^{8/3}\sin(2\theta/3)\right\rangle$.}
\end{center}
\end{figure}

To demonstrate how the regularity of $\curl \bs{u}$ which is present in \eqref{eq:phihL2error}--\eqref{eq:phihL2error-rate} shall affect we choose a singular solution on an L-shaped domain (Figure \ref{fig:ex2-u}). The true solution is $\bs{u} = \curl \langle 0, 0, \mu\rangle$ for a potential function $\mu = r^{8/3}\sin(2\theta/3)$ in the cylindrical coordinate on $\Omega = (1,1)^2 \times (0,1/2) \backslash ([0,1] \times[-1,0] \times[0,1/2])$. It can be verified that $\mu$ is bi-harmonic so that $\bs{f}=\bs{0}$, and $\curl \bs{u}\in \bs{H}^{5/3-\epsilon}(\Omega; \mathbb R^3)$. The convergence of the approximation $\boldsymbol{\phi} = \curl \bs{u}$ in $|\cdot|_{1,h}$ and $\|\cdot\|_0$ are both sub-optimal (Figure \ref{fig:ex2-uniform}) because $\bs\phi = \curl \bs u\not\in \bs{H}^2(\Omega; \mathbb R^3)$ which is required to achieve the optimal rate of convergence (see Theorem \ref{theorem:errorpriorphih} and Remark \ref{rmk:nonhomogeneous-bc}). While the approximation for $\bs{u}$ is optimal as \eqref{eq:errorprioruh}'s dependence only on the $L^2$-error $\|\bs\phi - \bs\phi_h\|_0$ and the approximation property of the linear N\'{e}d\'{e}lec space for $\bs u\in \bs H^{8/3-\epsilon}(\Omega; \mathbb R^3)$.
\begin{figure}[htbp]
  \centering
\begin{subfigure}[b]{0.4\linewidth}
    \centering
    \includegraphics[width=0.7\textwidth]{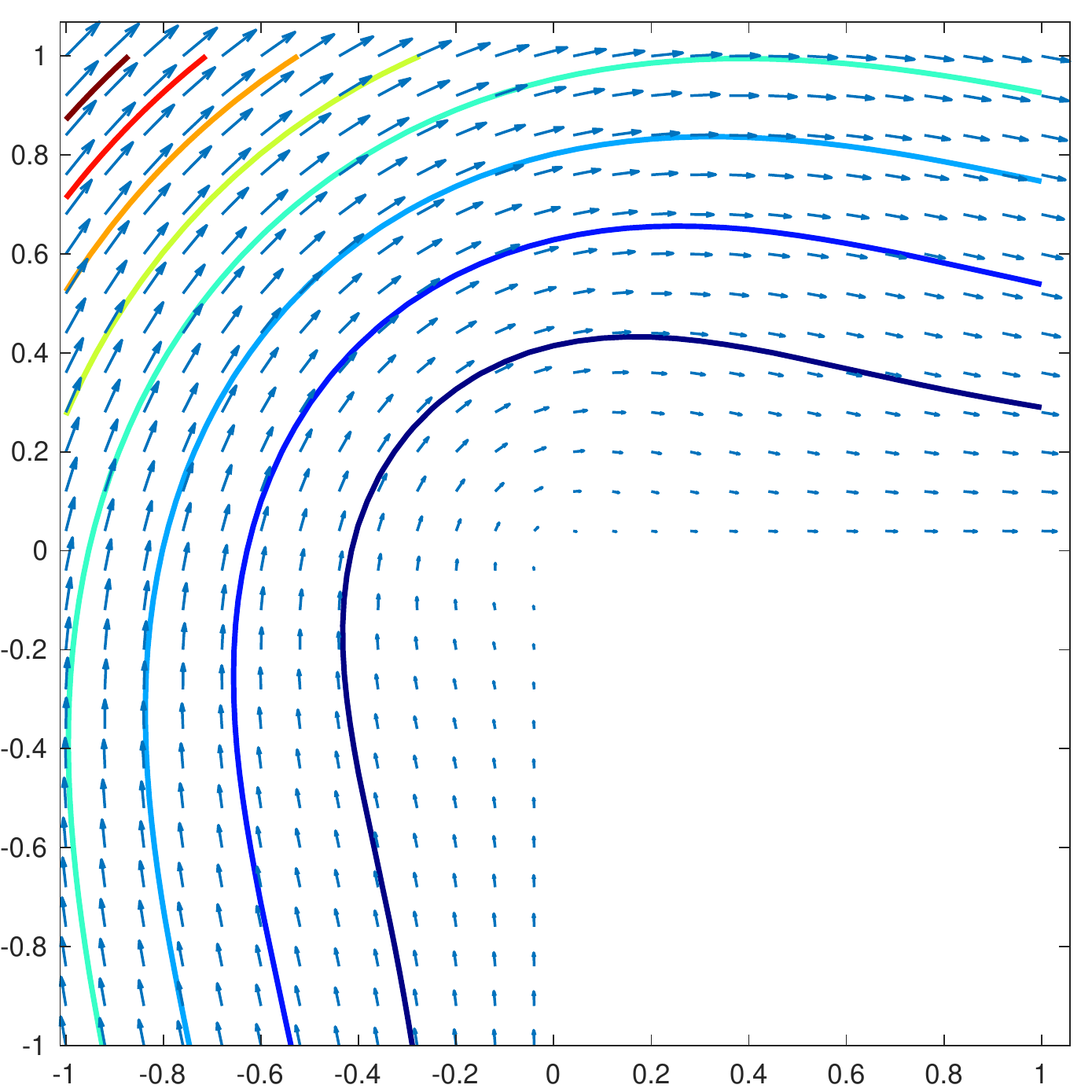}
    \caption{\label{fig:ex2-uexact}}
\end{subfigure}%
\hspace{0.3in}
\begin{subfigure}[b]{0.45\linewidth}
      \centering
      \includegraphics[width=0.7\textwidth]{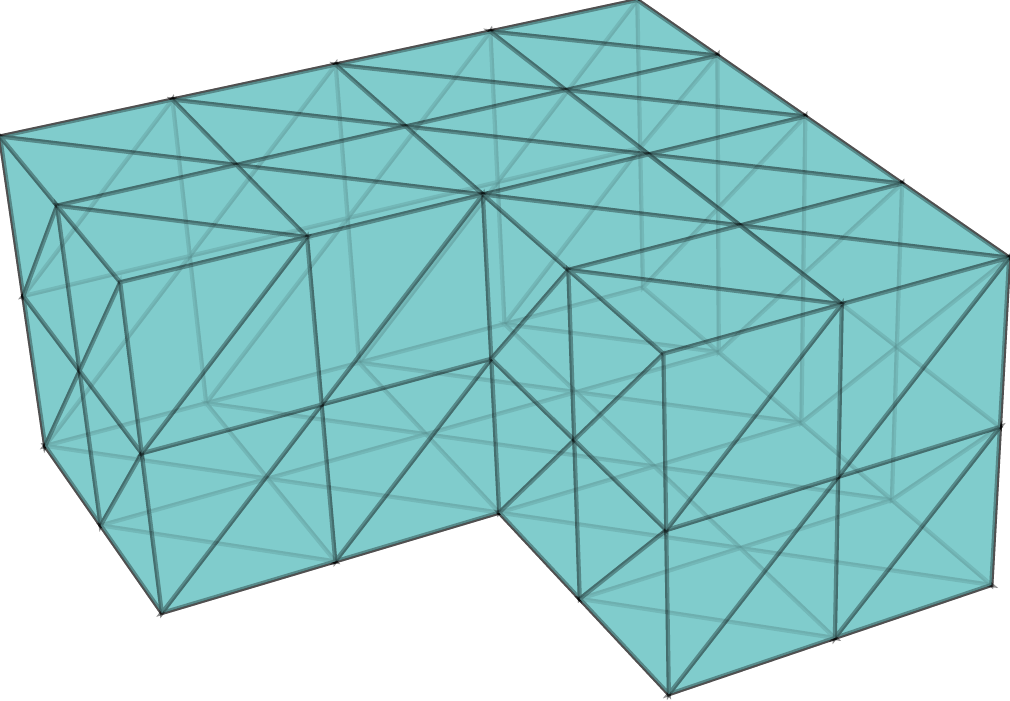}
      \caption{\label{fig:ex2-mesh}}
\end{subfigure}
%\vspace{-20pt}
\caption{The true solution vector field shown in (\subref{fig:ex2-uexact}) of 
the L-shaped domain example viewed from above on $z=1/4$ plane together with the level 
set of its $z$-component. A coarse mesh ($h=1/2$) can be found in (\subref{fig:ex2-mesh}). }
\label{fig:ex2-u}
\end{figure}

\section{A posteriori error analysis}
\label{sec:estimator}
In this section we will propose a reliable and efficient error estimator for the decoupled mixed finite element method \eqref{eq:mfem1}-\eqref{eq:mfem4}. We aim to get an accurate approximation of $\bs u$ in the energy norm which can be controlled by $\| \curl \boldsymbol{u} - \boldsymbol{\phi}_h \|_0 = \|\boldsymbol{\phi} - \boldsymbol{\phi}_h\|_0 \lesssim | \bs \phi - \bs \phi_h|_{1,h}$. Therefore we do not include problem \eqref{eq:mfem5}-\eqref{eq:mfem6} into the adaptive procedure. 

To this end, we first recall a quasi-interpolation \cite{Schoberl2008,ChenWu2017,DemlowHirani2014} and a decomposition of tensor-valued functions \cite{DariDuranPadra1995}.
\begin{lemma}[Theorem 1 in \cite{Schoberl2008}]\label{lem:quasiinterpolationcurl}
There exists an operator $\bs\Pi_h:\bs H_0(\curl, \Omega)\to \bs V_h^c$ such that for any $\bs v\in \bs H_0(\curl, \Omega)$ there exist $\tau\in H_0^1(\Omega)$ and $\bs\chi\in \boldsymbol H_0^1(\Omega; \mathbb{R}^3)$ satisfying
\[
\bs v-\bs\Pi_h\bs v=\nabla\tau+\bs\chi,
\]
\begin{equation}\label{eq:quasiinterpolationcurl}
\sum_{K\in\mathcal T_h}(h_K^{-2}\|\bs\chi\|_{0,K}^2+h_K^{-1}\|\bs\chi\|_{0,\partial K}^2)\lesssim \|\curl\bs v\|_{0}^2.
\end{equation}
%\begin{equation}\label{eq:quasiinterpolationcurl}
%\sum_{K\in\mathcal T_h}(h_K^{-4}\|\bs\chi\|_{0,K}^2+h_K^{-3}\|\bs\chi\|_{0,\partial K}^2)\lesssim \sum_{K\in\mathcal T_h}h_K^{-2}\|\curl\bs v\|_{0,K}^2.
%\end{equation}
\end{lemma}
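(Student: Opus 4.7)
The strategy is a regular decomposition of $\bs H_0(\curl, \Omega)$ followed by componentwise smoothed interpolation. First I would invoke a Hiptmair-type regular decomposition, valid on the topologically trivial polyhedron $\Omega$: any $\bs v \in \bs H_0(\curl, \Omega)$ admits a splitting
\[
\bs v = \nabla s + \bs z, \qquad s \in H_0^1(\Omega), \quad \bs z \in \bs H_0^1(\Omega; \mathbb R^3),
\]
with the a priori bound $\|s\|_1 + \|\bs z\|_1 \lesssim \|\bs v\|_{H(\curl)}$ and, crucially, $|\bs z|_1 \lesssim \|\curl \bs v\|_0$ (since $\curl \bs v = \curl \bs z$ and $\bs z$ can be chosen as a right inverse of $\curl$ with $\div \bs z = 0$). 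This absorbs the low regularity of $\bs v$ into the gradient part, which will be hidden inside $\nabla \tau$ in the end.

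Next I define $\bs \Pi_h \bs v := \nabla(S_h s) + \bs A_h \bs z$, where $S_h : H_0^1(\Omega) \to V_h^1$ is the boundary-preserving Scott--Zhang interpolant and $\bs A_h : \bs H_0^1(\Omega; \mathbb R^3) \to \bs V_h^c$ is a Scott--Zhang/Clément-type smoothed edge-element interpolation obtained by averaging the Nédélec edge moments over a properly chosen adjacent face. Since $\nabla V_h^1 \subset \bs V_h^c$, we have $\bs\Pi_h \bs v \in \bs V_h^c$. Setting $\tau := s - S_h s \in H_0^1(\Omega)$ and $\bs\chi := \bs z - \bs A_h \bs z \in \bs H_0^1(\Omega; \mathbb R^3)$ yields the required decomposition $\bs v - \bs \Pi_h \bs v = \nabla \tau + \bs \chi$ immediately, with the correct homogeneous boundary conditions on $\tau$ and $\bs \chi$ thanks to the boundary-preserving construction of both operators.

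The estimate \eqref{eq:quasiinterpolationcurl} then reduces to classical local Clément/Scott--Zhang bounds on the smoother field $\bs z$:
\[
h_K^{-2}\|\bs z - \bs A_h \bs z\|_{0,K}^2 + h_K^{-1}\|\bs z - \bs A_h \bs z\|_{0,\partial K}^2 \lesssim |\bs z|_{1,\omega_K}^2,
\]
where $\omega_K$ is the finite-overlap patch of elements touching $K$. Summing over $K$, exploiting the finite overlap of patches, and inserting $|\bs z|_1 \lesssim \|\curl \bs v\|_0$ delivers the claimed global bound.

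The main obstacle is the construction of $\bs A_h$: the standard Nédélec nodal interpolation is not well-defined on $\bs H^1$ (let alone on $\bs H(\curl)$), so one has to average each edge moment against a test function supported on a single adjacent face, following Schöberl's construction, while simultaneously preserving the homogeneous tangential trace on $\partial\Omega$. Once such an $H^1$-stable, boundary-preserving interpolation into $\bs V_h^c$ is in place, the two estimates in the display above follow from Bramble--Hilbert on the reference element plus a scaled trace inequality, and the rest of the argument is routine.
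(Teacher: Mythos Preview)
The paper does not prove this lemma; it simply quotes Sch\"oberl's theorem. So there is no ``paper's approach'' to compare against beyond the reference itself. Your plan, however, has a genuine gap.

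You set $\bs\chi := \bs z - \bs A_h\bs z$ and claim $\bs\chi\in \bs H_0^1(\Omega;\mathbb R^3)$. But $\bs A_h$ maps into $\bs V_h^c$, the lowest-order first-family N\'ed\'elec space, whose elements are \emph{not} $H^1$-conforming: only the tangential component is continuous across faces, while the normal component jumps. Hence $\bs A_h\bs z\notin \bs H^1(\Omega;\mathbb R^3)$ in general, and neither is $\bs z - \bs A_h\bs z$. The decomposition $\bs v-\bs\Pi_h\bs v=\nabla\tau+\bs\chi$ you produce therefore fails the stated regularity requirement on $\bs\chi$, and the trace term $h_K^{-1}\|\bs\chi\|_{0,\partial K}^2$ in \eqref{eq:quasiinterpolationcurl} would not even be finite.

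This is precisely the subtlety that makes Sch\"oberl's result nontrivial. One repair is to interpolate $\bs z$ first by a vector Scott--Zhang operator $\bs R_h$ into \emph{continuous} piecewise linears, set $\bs\chi:=\bs z-\bs R_h\bs z\in\bs H_0^1$, and then absorb the remainder $\bs R_h\bs z-\bs I_h^c\bs R_h\bs z$ into the gradient part; this requires showing that the latter is curl-free (using the commuting diagram $\curl\bs I_h^c=\bs I_h^d\curl$ and that $\bs I_h^d$ is the identity on piecewise constants in $\bs H_0(\div)$), which takes some care. Sch\"oberl's own argument constructs the commuting smoothed projection first and then builds a \emph{localized} regular decomposition of the residual, which is where most of the work lies; the global regular decomposition you invoke gives only a global $H^1$ bound on $\bs\chi$, not the elementwise weighted $L^2$ and trace bounds.
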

\begin{lemma}[Lemma~3.2 in \cite{DariDuranPadra1995}]\label{lem:decompL2tensor}
Let $\bs\tau$ be a tensor-valued function in $\bs L^2(\Omega;\mathbb M)$.
There exist $\bs r\in\boldsymbol H_0^1(\Omega; \mathbb{R}^3)$, $q\in L_0^2(\Omega)$, $\bs s\in\boldsymbol H^1(\Omega; \mathbb{M})$ and $\bs v\in\boldsymbol H_0^2(\Omega; \mathbb{R}^3)$ such that
\[
\bs \tau=\bs\nabla\bs r-q\bs I+\bs\curl\bs s,\quad \bs r=\curl\bs v,\quad q=\tr(\bs\curl\bs s),
\]
\begin{equation*}%\label{eq:decompL2tensor}
\|\bs r\|_1+\|\bs s\|_1 +\|q\|_0+\|\bs v\|_2\lesssim \|\bs \tau\|_0.
\end{equation*}
\end{lemma}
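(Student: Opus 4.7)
The plan is to realize the decomposition via a Stokes-type auxiliary problem, followed by a row-wise vector-potential construction on the simply-connected domain $\Omega$, with the stream function $\bs v$ extracted at the end from the divergence-free piece $\bs r$.

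\textbf{Step 1 (Stokes problem for $\bs r$ and $q$).} First I would solve: find $(\bs r, q)\in \bs H_0^1(\Omega;\mathbb R^3)\times L_0^2(\Omega)$ with
\[
(\nabla\bs r,\nabla\bs\psi)-(q,\div\bs\psi)=(\bs\tau,\nabla\bs\psi)\quad\forall\,\bs\psi\in\bs H_0^1,\qquad (\div\bs r,\mu)=0\quad\forall\,\mu\in L_0^2.
\]
The classical Stokes inf-sup yields a unique solution with $\|\bs r\|_1+\|q\|_0\lesssim\|\bs\tau\|_0$ and $\div\bs r=0$. The normalization of the pressure will be adjusted (equivalently, the data may be shifted by $-\tfrac12\tr\bs\tau\,\bs I$) to match the trace identity coming out of Step~2; this is a matter of bookkeeping.

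\textbf{Step 2 (divergence-free remainder and the tensor $\bs s$).} Set $\bs\sigma:=\bs\tau-\nabla\bs r+q\bs I$. Using $(q\bs I,\nabla\bs\psi)=(q,\div\bs\psi)$ together with Step~1 shows $(\bs\sigma,\nabla\bs\psi)=0$ for every $\bs\psi\in\bs H_0^1$, so $\bs\sigma$ is row-wise divergence-free in the distributional sense. Because $\Omega$ is homeomorphic to a ball, the standard $\bs L^2$ vector-potential theorem applied to each row of $\bs\sigma$ produces $\bs s\in\bs H^1(\Omega;\mathbb M)$ with $\curl\bs s=\bs\sigma$ and $\|\bs s\|_1\lesssim\|\bs\sigma\|_0\lesssim\|\bs\tau\|_0$. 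This yields $\bs\tau=\nabla\bs r-q\bs I+\curl\bs s$. Taking traces and using $\div\bs r=0$ gives $\tr\bs\tau=-3q+\tr(\curl\bs s)$, so the requirement $q=\tr(\curl\bs s)$ is equivalent to $q=-\tfrac12\tr\bs\tau$ modulo its mean, which is exactly the normalization selected in Step~1.

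\textbf{Step 3 (stream function $\bs v$).} The vector field $\bs r\in\bs H_0^1$ is divergence-free with full Dirichlet trace $\bs r|_{\partial\Omega}=\bs 0$. On the simply connected $\Omega$ I would construct $\bs v\in \bs H_0^2(\Omega;\mathbb R^3)$ with $\curl\bs v=\bs r$ and $\|\bs v\|_2\lesssim\|\bs r\|_1$, either by solving an auxiliary biharmonic-type problem for $\bs v$ or by combining a Bogovskii-style potential with elliptic regularity. The fact that $\bs r$ vanishes fully on the boundary (not merely $\bs r\cdot\bs n=0$) is precisely what allows the normal derivative of $\bs v$ to be killed as well, so that $\bs v$ lies in $\bs H_0^2$ rather than only $\bs H^1\cap\bs H_0^1$. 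The norm bound $\|\bs v\|_2\lesssim\|\bs r\|_1\lesssim\|\bs\tau\|_0$ closes the required estimate.

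\textbf{Main obstacle.} The hardest part is Step~3: promoting the standard $\bs H^1$ vector potential of $\bs r$ to an $\bs H_0^2$ stream function requires a delicate boundary construction and higher elliptic regularity, and is qualitatively stronger than the usual div-curl lifting. A tightly related bookkeeping issue is making sure the pressure $q$ from Step~1 simultaneously lies in $L_0^2$ and equals $\tr(\curl\bs s)$; this couples the variational formulation in Step~1 with the trace computation in Step~2 and must be tracked consistently throughout the argument.
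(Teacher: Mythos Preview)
The paper does not prove this lemma; it is quoted from \cite{DariDuranPadra1995} (a two-dimensional result) without argument, so there is no in-paper proof to compare against. Your outline has the right shape but contains two genuine gaps.

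First, the pressure produced by your Stokes problem in Step~1 is not the $q$ satisfying $q=\tr(\curl\bs s)$. Taking the trace of $\bs\tau=\nabla\bs r-q\bs I+\curl\bs s$ with $\div\bs r=0$ forces $q=-\tfrac12\tr\bs\tau$, whereas the Stokes pressure solves $-\Delta\bs r+\nabla q=-\div_{\mathrm{row}}\bs\tau$ and is generally something else (try $\bs\tau=f\bs I$: Stokes returns $\bs r=\bs 0$, $q=-f+\bar f$, not $-\tfrac32 f$). Your ``bookkeeping'' remark cannot repair this without also changing $\bs r$, since redefining $q$ destroys the row-wise divergence-freeness of $\bs\tau-\nabla\bs r+q\bs I$ on which Step~2 rests. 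Incidentally, $q\in L_0^2$ together with $q=-\tfrac12\tr\bs\tau$ would require $\int_\Omega\tr\bs\tau=0$, a hypothesis absent from the lemma though satisfied in the paper's only application (where $\tr\bs\tau=\div_h(\bs\phi-\bs\phi_h)=0$).

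Second, you correctly flag Step~3 as the crux, but neither of your proposed routes closes it. The two-dimensional version is essentially automatic: a scalar stream function $v$ with $\curl v=\bs r\in\bs H_0^1$ has its full gradient vanishing on $\partial\Omega$, so $v\in H_0^2$. In three dimensions $\curl\bs v=\bs r=\bs 0$ on $\partial\Omega$ constrains only three of the nine boundary entries of $\nabla\bs v$, so $\bs v\in\bs H_0^2$ does not follow; a biharmonic solve would require data built from second derivatives of $\bs r\in\bs H^1$, which are unavailable, and a Bogovski\u{\i}-type construction on a non-convex polyhedron does not deliver $\bs H^2$ regularity. You have located the obstacle but not overcome it.
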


For any subset $\mathcal M_h\subseteq\mathcal T_h$, define error estimators
\begin{equation*}
% \label{eq:eta1}
  \eta_1^2(\bs w_h, \bs f,\mathcal M_h):=\sum_{K\in\mathcal M_h}h_K^2\|\bs f\|_{0,K}^2+\sum_{F\in\mathcal F_h^i(\mathcal M_h)}h_F\|\llbracket(\curl\bs w_h)\times\bs n_F\rrbracket\|_{0,F}^2,
\end{equation*}
\begin{equation*}
% \label{eq:eta2}
  \eta_2^2(\bs\phi_h, \bs w_h,\mathcal M_h):=\sum_{K\in\mathcal M_h}h_K^2\|\curl\bs w_h\|_{0,K}^2+\sum_{F\in\mathcal F_h(\mathcal M_h)}h_F\|\llbracket\bs n_F\times(\bs\nabla_h\bs \phi_h)\rrbracket\|_{0,F}^2.
\end{equation*}
%\[
%\eta_1^2(\bs w_h, \bs f,\mathcal M_h):=\sum_{K\in\mathcal M_h}h_K^4\|\bs f\|_{0,K}^2+\sum_{F\in\mathcal F_h^i(\mathcal M_h)}h_F^3\|\llbracket(\curl\bs w_h)\times\bs n_F\rrbracket\|_{0,F}^2,
%\]
%\[
%\eta_3^2(\bs\phi_h, \bs u_h,\mathcal M_h):=\sum_{K\in\mathcal M_h}h_K^2\|\curl\bs \phi_h\|_{0,K}^2+\sum_{F\in\mathcal F_h^i(\mathcal M_h)}h_F\|\llbracket(\bs \phi_h-\curl\bs u_h)\times\bs n_F\rrbracket\|_{0,F}^2,
%\]
% \[
% \eta^2(\bs\phi_h, \bs u_h,\bs w_h, \bs f,\mathcal M_h):=\eta_1^2(\bs w_h, \bs f,\mathcal M_h)+\eta_2^2(\bs\phi_h, \bs w_h,\mathcal M_h)+\eta_3^2(\bs\phi_h, \bs u_h,\mathcal M_h).
% \]

Let $\bs Q_0^K \bs f$ be the $\bs L^2$-projection of the data onto 
$\prod\limits_{K\in \mathcal{T}_h} \mathbb P_{0}(K;\mathbb R^3)$, then the data oscillation is defined as
\[
\mathrm{osc}^2(\bs f,\mathcal M_h):=\sum_{K\in\mathcal M_h}h_K^2\|\bs f-\bs Q_0^K\bs f\|_{0,K}^2.
\]

Let $\bs I_h^{SZ}$ be the tensorial Scott-Zhang interpolation from $\boldsymbol H^1(\Omega; \mathbb{M})$ to the tensorial linear Lagrange element space \cite{ScottZhang1990}. It holds
\begin{equation}\label{eq:scottzhangestimate}
\sum_{K\in\mathcal T_h}\left(h_K^{-2}\|\bs v-\bs I_h^{SZ}\bs v\|_{0,K}^2+|\bs v-\bs I_h^{SZ}\bs v|_{1,K}^2\right)\lesssim |\bs v|_1^2\quad\forall~\bs v\in \boldsymbol H^1(\Omega; \mathbb{M}).
\end{equation}

%\begin{lemma}\label{lem:errorposterioriwh0}
%Let $(\bs w, 0)\in \bs H_0(\curl, \Omega)\times H_0^{1}(\Omega)$ be the solution of the Maxwell equation \eqref{eq:quarticcurldecouple1}-\eqref{eq:quarticcurldecouple2}, and $(\bs w_h, 0)\in \bs V_h^c\times V_h^1$ the solution of the mixed method \eqref{eq:mfem1}-\eqref{eq:mfem2}.
%For any $\bs v\in \bs H_0(\curl, \Omega)$, it holds
%\begin{equation}\label{eq:errorposterioriwh0}
%(\curl (\bs w-\bs w_h), \curl\bs v)\lesssim \eta_1(\bs w_h, \bs f,\mathcal T_h)\|\curl\bs v\|_{0}.
%\end{equation}
%\end{lemma}
We first present an a posterior analysis of error $\bs w -\bs w_h$ which is well-documented for the saddle point formulation of Maxwell's equation (see e.g., \cite{Beck;Hiptmair;Hoppe;Wohlmuth:2000Residual,Zhong;Shu;Chen;Xu:2010Convergence,Zheng;Chen;Wang:2006adaptive}). We include a proof here for the completeness.
\begin{lemma}\label{lem:errorposterioriwh}
Let $(\bs w, 0)\in \bs H_0(\curl, \Omega)\times H_0^{1}(\Omega)$ be the solution of the Maxwell equation \eqref{eq:quarticcurldecouple1}-\eqref{eq:quarticcurldecouple2}, and $(\bs w_h, 0)\in \bs V_h^c\times V_h^1$ the solution of the mixed method \eqref{eq:mfem1}-\eqref{eq:mfem2}.
We have
\begin{equation}\label{eq:errorposterioriwhreliability}
\|\curl (\bs w-\bs w_h)\|_0\lesssim \eta_1(\bs w_h, \bs f,\mathcal T_h),
\end{equation}
\begin{equation}\label{eq:errorposterioriwhefficiency}
\eta_1(\bs w_h, \bs f,\mathcal T_h)\lesssim \|\curl (\bs w-\bs w_h)\|_0+\mathrm{osc}(\bs f,\mathcal T_h).
\end{equation}
\end{lemma}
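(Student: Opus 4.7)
The plan is to treat the two inequalities separately using the standard recipe for residual-based a posteriori error estimators on $\bs H_0(\curl)$-conforming Maxwell discretizations. Reliability \eqref{eq:errorposterioriwhreliability} will follow from the Schöberl-type quasi-interpolation in Lemma~\ref{lem:quasiinterpolationcurl} combined with the Galerkin orthogonality \eqref{eq:GalerkinOrthcurl}; efficiency \eqref{eq:errorposterioriwhefficiency} will use the classical Verfürth bubble-function technique on both elements and interior faces.

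For reliability, set $\bs e := \bs w - \bs w_h \in \bs H_0(\curl,\Omega)$ and observe that the vanishing Lagrange multiplier $\sigma = 0$ reduces \eqref{eq:quarticcurldecouple1} to $(\curl \bs w, \curl \bs v) = (\bs f, \bs v)$ for all $\bs v \in \bs H_0(\curl,\Omega)$. Consequently
\begin{equation*}
\|\curl \bs e\|_0^2 = (\bs f, \bs e - \bs\Pi_h \bs e) - (\curl \bs w_h, \curl (\bs e - \bs\Pi_h \bs e)),
\end{equation*}
using \eqref{eq:GalerkinOrthcurl} to subtract the quasi-interpolant. Decomposing $\bs e - \bs\Pi_h \bs e = \nabla \tau + \bs\chi$ as in Lemma~\ref{lem:quasiinterpolationcurl}, the contributions of $\nabla\tau$ annihilate: $(\bs f,\nabla\tau) = -(\div \bs f,\tau) = 0$ because $\div \bs f = 0$ and $\tau \in H_0^1(\Omega)$, and $(\curl \bs w_h, \curl\nabla\tau) = 0$ since $\curl\nabla = 0$. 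What remains is $(\bs f,\bs\chi) - (\curl \bs w_h, \curl \bs\chi)$, and since the lowest-order Nédélec basis makes $\curl \bs w_h$ piecewise constant, so that $\curl\curl \bs w_h|_K = 0$, element-wise integration by parts turns the second term into a sum of the tangential jumps $\int_F \llbracket (\curl \bs w_h)\times \bs n_F\rrbracket\cdot \bs\chi$ over interior faces (boundary contributions vanish because $\bs\chi\in \boldsymbol H_0^1(\Omega;\mathbb{R}^3)$). Two Cauchy--Schwarz inequalities, weighted by $h_K$ and $h_F^{1/2}$ respectively, together with \eqref{eq:quasiinterpolationcurl}, yield $\|\curl \bs e\|_0^2 \lesssim \eta_1(\bs w_h,\bs f,\mathcal T_h)\|\curl \bs e\|_0$, from which \eqref{eq:errorposterioriwhreliability} follows.

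For efficiency I would apply the standard bubble-function machinery. On each $K$, set $\bs f_K := \bs Q_0^K \bs f$ and test against $\bs\psi = b_K \bs f_K$; the bubble-norm equivalences $\|\bs\psi\|_{0,K}\lesssim \|\bs f_K\|_{0,K}$ and the inverse estimate $\|\curl \bs\psi\|_{0,K}\lesssim h_K^{-1}\|\bs f_K\|_{0,K}$ combined with the residual identity $(\curl(\bs w - \bs w_h),\curl\bs\psi)_K = (\bs f,\bs\psi)_K$ give $h_K\|\bs f\|_{0,K}\lesssim \|\curl(\bs w - \bs w_h)\|_{0,K} + h_K\|\bs f - \bs f_K\|_{0,K}$, the last piece being the data oscillation. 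For an interior face $F$ shared by $K_1,K_2$, a bubble lifting $\bs\psi$ of the jump $\bs j_F := \llbracket(\curl \bs w_h)\times \bs n_F\rrbracket$ supported in $\omega_F$ satisfies
\begin{equation*}
\int_F \bs j_F\cdot \bs\psi = \sum_{K\subset \omega_F}(\curl \bs w_h,\curl \bs\psi)_K
\end{equation*}
after elementwise integration by parts (using $\curl\curl \bs w_h|_K = 0$ and the fact that $\bs\psi$ vanishes on $\partial K\setminus F$). Writing $\curl \bs w_h = \curl(\bs w_h - \bs w) + \curl \bs w$ and invoking the continuous equation for the $\curl\bs w$ piece produces $h_F^{1/2}\|\bs j_F\|_{0,F}\lesssim \|\curl(\bs w - \bs w_h)\|_{0,\omega_F} + h_F\|\bs f\|_{0,\omega_F}$ via standard bubble scaling; the latter is already bounded by the element efficiency estimate up to oscillation, which assembles into \eqref{eq:errorposterioriwhefficiency}.

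The main obstacle is bookkeeping rather than any single deep step: verifying that the gradient component from Lemma~\ref{lem:quasiinterpolationcurl} annihilates \emph{both} the data term (which requires $\div \bs f = 0$ globally, not merely elementwise) and the discrete curl term (trivial but essential), and that the face-bubble test function $\bs\psi$ is supported strictly inside $\omega_F$ so that only the jump on $F$ survives the integration by parts. Once these cancellations are correctly identified, the remaining constants depend only on shape regularity and are uniform in $h$.
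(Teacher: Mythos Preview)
Your proposal is correct and follows essentially the same approach as the paper: reliability via Lemma~\ref{lem:quasiinterpolationcurl} plus the Galerkin orthogonality \eqref{eq:GalerkinOrthcurl} and elementwise integration by parts, and efficiency via the standard bubble-function technique (which the paper merely cites without spelling out). The only cosmetic difference is that the paper passes directly to $\curl\bs\chi$ using $\curl\nabla\tau=0$ before invoking the continuous equation, whereas you first expand and then argue that the $\nabla\tau$ contributions vanish separately---both routes are valid and lead to the same estimate.
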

\begin{proof}
Applying Lemma~\ref{lem:quasiinterpolationcurl} to $\bs v=\bs w-\bs w_h$, we get from \eqref{eq:GalerkinOrthcurl} and \eqref{eq:quarticcurldecouple1} that
\begin{align*}
\|\curl (\bs w-\bs w_h)\|_0^2&=(\curl (\bs w-\bs w_h), \curl(\bs v-\bs\Pi_h\bs v))=(\curl (\bs w-\bs w_h), \curl\bs\chi) \\
&=(\bs f, \bs\chi)-(\curl\bs w_h, \curl\bs\chi) \\
&=(\bs f, \bs\chi)-\sum_{K\in\mathcal T_h}((\curl\bs w_h)\times\bs n, \bs\chi)_{\partial K} \\
&=(\bs f, \bs\chi)-\sum_{F\in\mathcal F_h^i}(\llbracket(\curl\bs w_h)\times\bs n_F\rrbracket, \bs\chi)_{F}.
\end{align*}
Hence we have derived \eqref{eq:errorposterioriwhreliability} by \eqref{eq:quasiinterpolationcurl}.

The efficiency \eqref{eq:errorposterioriwhefficiency} follows from the standard bubble function techniques (see e.g., \cite{Beck;Hiptmair;Hoppe;Wohlmuth:2000Residual}).
\end{proof}

%\begin{proof}
%Let $\bs v=\bs w-\bs w_h$. Applying Lemma~\ref{lem:quasiinterpolationcurl} to $\bs v$, we get from \eqref{eq:GalerkinOrthcurl} and \eqref{eq:quarticcurldecouple1} that
%\begin{align*}
%\|\curl (\bs w-\bs w_h)\|_0^2&=(\curl (\bs w-\bs w_h), \curl\bs v)=(\curl (\bs w-\bs w_h), \curl(\bs v-\bs\Pi_h\bs v)) \\
%&=(\curl (\bs w-\bs w_h), \curl\bs\chi)=(\bs f, \bs\chi)-(\curl\bs w_h, \curl\bs\chi) \\
%&=(\bs f, \bs\chi)+\sum_{K\in\mathcal T_h}((\curl\bs w_h)\times\bs n, \bs\chi)_{\partial K} \\
%&=(\bs f, \bs\chi)+\sum_{F\in\mathcal F_h^i}(\llbracket(\curl\bs w_h)\times\bs n_F\rrbracket, \bs\chi)_{F}.
%\end{align*}
%Then we derive the reliability \eqref{eq:errorposterioriwhreliability} by using \eqref{eq:quasiinterpolationcurl}.
%The efficiency \eqref{eq:errorposterioriwhefficiency} follows from the standard bubble function techniques.
%\end{proof}

%As a direct result of Lemma~\ref{lem:errorposterioriwh0}, we have the following a posteriori error estimate for $\bs w_h$.
%\begin{lemma}
%Let $(\bs w, 0)\in \bs H_0(\curl, \Omega)\times H_0^{1}(\Omega)$ be the solution of the Maxwell equation \eqref{eq:quarticcurldecouple1}-\eqref{eq:quarticcurldecouple2}, and $(\bs w_h, 0)\in \bs V_h^c\times V_h^1$ the solution of the mixed method \eqref{eq:mfem1}-\eqref{eq:mfem2}.
%For any $\bs v\in \bs H_0(\curl, \Omega)$, it holds
%\begin{equation*}%\label{eq:errorposterioriwh0}
%\sum_{K\in\mathcal T_h}h_K^{2}\|\curl (\bs w-\bs w_h)\|_{0,K}^2\lesssim \eta_1^2(\bs w_h, \bs f,\mathcal T_h).
%\end{equation*}
%\end{lemma}

\begin{lemma}\label{lem:errorposterioriphihph}
Let $(\bs w, 0, \bs \phi, p)\in \bs H_0(\curl, \Omega)\times H_0^{1}(\Omega)\times\boldsymbol H_0^1(\Omega; \mathbb{R}^3)\times L_0^{2}(\Omega)$ be the solution of the variational formulation \eqref{eq:quarticcurldecouple1}-\eqref{eq:quarticcurldecouple4}, and $(\bs w_h, 0, \bs \phi_h, p_h)\in \bs V_h^c\times V_h^1\times\bs V_h^{\rm CR}\times \mathcal Q_h$ the solution of the mixed method \eqref{eq:mfem1}-\eqref{eq:mfem4}.
We have
\begin{equation}\label{eq:errorposterioriphihph}
|\bs \phi-\bs \phi_h|_{1,h}+ h\|\curl (\bs w-\bs w_h)\|_0 \lesssim h\eta_1(\bs w_h, \bs f,\mathcal T_h)+\eta_2(\bs\phi_h, \bs w_h,\mathcal T_h),
\end{equation}
\begin{align}
h\eta_1(\bs w_h, \bs f,\mathcal T_h)+\eta_2(\bs\phi_h, \bs w_h,\mathcal T_h)\lesssim &~|\bs \phi-\bs \phi_h|_{1,h}+\|p-p_h\|_0 \notag\\
&+h\|\curl (\bs w-\bs w_h)\|_0+h\mathrm{osc}(\bs f,\mathcal T_h). \label{eq:errorposterioriphihphefficientcy}
\end{align}
\end{lemma}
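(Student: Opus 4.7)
The plan is to prove reliability \eqref{eq:errorposterioriphihph} by applying the Helmholtz-type tensor decomposition of Lemma~\ref{lem:decompL2tensor} to $\bs\tau := \bs\nabla_h(\bs\phi-\bs\phi_h)$, and to prove efficiency \eqref{eq:errorposterioriphihphefficientcy} by the standard element- and face-bubble tests on the perturbed Stokes system \eqref{eq:quarticcurldecouple-perturbed1}--\eqref{eq:quarticcurldecouple-perturbed2}. In both directions, the Maxwell-type contribution $h\|\curl(\bs w-\bs w_h)\|_0$ is handled directly by Lemma~\ref{lem:errorposterioriwh}.

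For the reliability, apply Lemma~\ref{lem:decompL2tensor} to write $\bs\nabla_h(\bs\phi-\bs\phi_h) = \bs\nabla\bs r - q\bs I + \bs\curl\bs s$ with $\bs r = \curl\bs v$, $\bs v\in \bs H_0^2(\Omega;\mathbb R^3)$, and $\|\bs r\|_1+\|\bs s\|_1+\|q\|_0+\|\bs v\|_2 \lesssim |\bs\phi-\bs\phi_h|_{1,h}$, then split $|\bs\phi-\bs\phi_h|_{1,h}^2 = \mathrm{I}+\mathrm{II}+\mathrm{III}$ against the three pieces of the decomposition. The middle term $\mathrm{II} = -(\div_h(\bs\phi-\bs\phi_h),q)$ vanishes, since $\div\bs\phi=0$ and \eqref{eq:mfem4} together with the Crouzeix-Raviart boundary structure forces $\div_h\bs\phi_h\equiv 0$. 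For $\mathrm{I}$, test \eqref{eq:quarticcurldecouple3} by $\bs r$ (using $\div\bs r=0$) and \eqref{eq:mfem3} by the CR interpolant $\bs I_h^s\bs r$ (using $\div_h\bs I_h^s\bs r=0$ and \eqref{eq:errorestimateIhs1}, which lets me replace $\bs\nabla_h\bs I_h^s\bs r$ by $\bs\nabla\bs r$ against the piecewise-constant $\bs\nabla_h\bs\phi_h$), giving
\begin{equation*}
\mathrm{I} = (\curl(\bs w-\bs w_h),\bs r) + (\curl\bs w_h,\bs r-\bs I_h^s\bs r).
\end{equation*}
Writing $\bs r = \curl\bs v$ and subtracting the N\'ed\'elec interpolant $\bs I_h^c\bs v$, the Galerkin orthogonality \eqref{eq:GalerkinOrthcurl} together with the standard interpolation error bounds the first subterm by $h\|\curl(\bs w-\bs w_h)\|_0 \,|\bs\phi-\bs\phi_h|_{1,h}$, while \eqref{eq:errorestimateIhs} bounds the second by $\eta_2\,|\bs\phi-\bs\phi_h|_{1,h}$.

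For $\mathrm{III} = (\bs\nabla_h(\bs\phi-\bs\phi_h),\bs\curl\bs s)$, the continuous part vanishes, $(\bs\nabla\bs\phi,\bs\curl\bs s)=0$, because $\bs n\times\bs\nabla\bs\phi = 0$ on $\partial\Omega$; the elementwise integration by parts then yields
\begin{equation*}
\mathrm{III} = -\sum_{F\in\mathcal F_h}\bigl(\llbracket\bs n_F\times\bs\nabla_h\bs\phi_h\rrbracket,\, \bs s\bigr)_F.
\end{equation*}
Inserting the tensorial Scott-Zhang interpolant $\bs I_h^{SZ}\bs s$ and using \eqref{eq:scottzhangestimate} with a scaled trace inequality on the face patches bounds $\mathrm{III}$ by the jump part of $\eta_2$ times $\|\bs s\|_1$. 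Collecting the three estimates gives $|\bs\phi-\bs\phi_h|_{1,h}^2 \lesssim (h\|\curl(\bs w-\bs w_h)\|_0 + \eta_2)\,|\bs\phi-\bs\phi_h|_{1,h}$, and Lemma~\ref{lem:errorposterioriwh} then absorbs $h\|\curl(\bs w-\bs w_h)\|_0$ into $h\eta_1$, producing \eqref{eq:errorposterioriphihph}.

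For the efficiency, $h\eta_1$ is controlled by \eqref{eq:errorposterioriwhefficiency}. For $\eta_2$, regard $\bs\phi_h$ as the CR-P0 approximation to the perturbed Stokes problem \eqref{eq:quarticcurldecouple-perturbed1}--\eqref{eq:quarticcurldecouple-perturbed2} and apply the standard element- and face-bubble tests: exploiting that $\Delta\bs\phi_h\equiv 0$ and $\nabla p_h\equiv 0$ on each $K$, one gets $h_K\|\curl\bs w_h\|_{0,K} + h_F^{1/2}\|\llbracket\bs n_F\times\bs\nabla_h\bs\phi_h\rrbracket\|_{0,F} \lesssim |\tilde{\bs\phi}-\bs\phi_h|_{1,\omega} + \|\tilde p - p_h\|_{0,\omega}$ on the appropriate patch $\omega$. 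Triangle inequalities together with Lemma~\ref{lem:estimate-phitilde} and a Stokes $H^{-1}$-stability bound then swap $\tilde{\bs\phi},\tilde p$ back to $\bs\phi,p$ at the cost of the extra $h\|\curl(\bs w-\bs w_h)\|_0+h\,\mathrm{osc}(\bs f,\mathcal T_h)$. The main obstacle lies in the reliability analysis of $\mathrm{III}$: after the split $\bs s = (\bs s - \bs I_h^{SZ}\bs s) + \bs I_h^{SZ}\bs s$, the residual sum $\sum_F (\llbracket\bs n_F\times\bs\nabla_h\bs\phi_h\rrbracket, \bs I_h^{SZ}\bs s)_F$ does not vanish on its own and must be reabsorbed by exploiting the element-wise constancy of $\bs\nabla_h\bs\phi_h$ together with the cross-face continuity of $\bs I_h^{SZ}\bs s$; keeping this face-patch book-keeping clean is the delicate step, as is preserving the right $h$-factor on $\|\tilde p - p\|_0$ on the efficiency side, which may require a sharper dual Stokes estimate than the crude $H^{-1}$-stability bound.
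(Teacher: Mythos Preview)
Your reliability argument is the paper's: decompose $\bs\nabla_h(\bs\phi-\bs\phi_h)$ via Lemma~\ref{lem:decompL2tensor}, kill the $q\bs I$ piece by $\div_h(\bs\phi-\bs\phi_h)=0$, treat the $\bs\nabla\bs r$ part through \eqref{eq:quarticcurldecouple3}, \eqref{eq:mfem3}, \eqref{eq:errorestimateIhs1} and the orthogonality \eqref{eq:GalerkinOrthcurl}, and handle $\bs\curl\bs s$ with a Scott--Zhang insertion. The one place you stumble is the term you flag as the obstacle: the sum $\sum_F(\llbracket\bs n_F\times\bs\nabla_h\bs\phi_h\rrbracket,\bs I_h^{SZ}\bs s)_F$ is \emph{exactly zero}, not something to be reabsorbed. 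Undoing the elementwise integration by parts shows it equals $(\bs\nabla_h\bs\phi_h,\bs\curl(\bs I_h^{SZ}\bs s))$; integrating by parts the other way (shifting the derivative onto the piecewise-constant tensor $\bs\curl(\bs I_h^{SZ}\bs s)$) leaves $\sum_F(\llbracket\bs\phi_h\rrbracket,\bs\curl(\bs I_h^{SZ}\bs s)\,\bs n_F)_F$, and each summand vanishes because $\bs\curl(\bs I_h^{SZ}\bs s)\,\bs n_F$ is a single-valued constant on $F$ (the rows of $\bs I_h^{SZ}\bs s$ are continuous piecewise linear, so their curls have continuous normal traces) while $\int_F\llbracket\bs\phi_h\rrbracket=0$ by the Crouzeix--Raviart condition. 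The paper simply states $(\bs\nabla_h\bs\phi_h,\bs\curl(\bs I_h^{SZ}\bs s))=0$ before integrating by parts; this is the underlying reason.

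For the efficiency your detour through $(\tilde{\bs\phi},\tilde p)$ is unnecessary and creates precisely the difficulty you anticipate: converting $\|\tilde p-p_h\|_0$ back to $\|p-p_h\|_0$ costs $\|\tilde p-p\|_0$, and the Stokes inf--sup only delivers $\|\tilde p-p\|_0\lesssim\|\curl(\bs w-\bs w_h)\|_{-1}$, for which an extra $h$ is not immediate. The paper avoids this entirely by running the bubble tests on the \emph{unperturbed} residual: since $-\Delta\bs\phi+\nabla p=\curl\bs w$, the strong element residual equals $\curl\bs w_h = \curl\bs w - \curl(\bs w-\bs w_h)$, and because $\Delta_h\bs\phi_h=0$, $\nabla_h p_h=0$ elementwise, the element-bubble test against $b_K\curl\bs w_h$ yields directly
\[
h_K\|\curl\bs w_h\|_{0,K}\lesssim|\bs\phi-\bs\phi_h|_{1,K}+\|p-p_h\|_{0,K}+h_K\|\curl(\bs w-\bs w_h)\|_{0,K},
\]
with the correct $h$-scaling already in place and no perturbed pressure appearing. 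The tangential jump is handled by the usual face-bubble test and produces only $|\bs\phi-\bs\phi_h|_{1,h}$.
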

\begin{proof}
Applying Lemma~\ref{lem:decompL2tensor} to $\bs\nabla_h(\bs \phi-\bs \phi_h)$,
there exist $\bs r\in\boldsymbol H_0^1(\Omega; \mathbb{R}^3)$, $q\in L_0^2(\Omega)$, $\bs s\in\boldsymbol H^1(\Omega; \mathbb{M})$ and $\bs v\in\boldsymbol H_0^2(\Omega; \mathbb{R}^3)$ such that
\[
\bs\nabla_h(\bs \phi-\bs \phi_h)=\bs\nabla\bs r-q\bs I+\bs\curl\bs s,\quad \bs r=\curl\bs v,
\]
\begin{equation}\label{eq:decompL2tensor}
\|\bs r\|_1+\|\bs s\|_1 +\|q\|_0+\|\bs v\|_2\lesssim \|\bs\nabla_h(\bs \phi-\bs \phi_h)\|_0.
\end{equation}
Note that $\div_h(\bs \phi-\bs \phi_h)=0$ from \eqref{eq:quarticcurldecouple4} and \eqref{eq:mfem4}. Since $\div\bs r=0$, we get from \eqref{eq:quarticcurldecouple3} that
\begin{align*}
|\bs \phi-\bs \phi_h|_{1,h}^2&=(\bs\nabla_h(\bs \phi-\bs \phi_h), \bs\nabla\bs r-q\bs I+\bs\curl\bs s)=(\bs\nabla_h(\bs \phi-\bs \phi_h), \bs\nabla\bs r+\bs\curl\bs s) \\
&=(\bs\nabla_h(\bs \phi-\bs \phi_h), \bs\nabla\bs r)+(\div\bs r, p-p_h) + (\bs\nabla_h(\bs \phi-\bs \phi_h), \bs\curl\bs s) \\
&=(\curl  \bs w, \bs r)-(\bs\nabla_h\bs \phi_h, \bs\nabla\bs r)-(\div\bs r, p_h) - (\bs\nabla_h\bs \phi_h, \bs\curl\bs s). %\label{eq:20200201-1}
\end{align*}
It follows from \eqref{eq:errorestimateIhs1} and \eqref{eq:mfem3} that
\[
(\bs\nabla_h\bs \phi_h, \bs\nabla\bs r)+(\div\bs r, p_h)=(\bs\nabla_h\bs \phi_h, \bs\nabla(\bs I_h^{s}\bs r))+(\div(\bs I_h^{s}\bs r), p_h)=(\curl\bs w_h, \bs I_h^{s}\bs r).
\]
Noticing that $(\bs\nabla_h\bs \phi_h, \bs\curl(\bs I_h^{SZ}\bs s))=0$, we obtain from the last two identities that
\begin{align}
|\bs \phi-\bs \phi_h|_{1,h}^2&=(\curl  \bs w, \bs r)-(\curl\bs w_h, \bs I_h^{s}\bs r) - (\bs\nabla_h\bs \phi_h, \bs\curl(\bs s-\bs I_h^{SZ}\bs s))\notag\\
&=(\curl(\bs w-\bs w_h), \curl\bs v)+(\curl\bs w_h, \bs r-\bs I_h^{s}\bs r) \notag\\
&\quad - (\bs\nabla_h\bs \phi_h, \bs\curl(\bs s-\bs I_h^{SZ}\bs s)). \label{eq:20200201-2}
\end{align}
Next we estimate the right hand side of \eqref{eq:20200201-2} term by term.
%Employing \eqref{eq:GalerkinOrthcurl} and \eqref{eq:errorposterioriwh0}, it follows
Employing \eqref{eq:GalerkinOrthcurl} and \eqref{eq:errorposterioriwhreliability}, it follows
%\begin{align*}
%(\curl(\bs w-\bs w_h), \curl\bs v)&=\inf_{\bs v_h\in \bs V_h^c}(\curl(\bs w-\bs w_h), \curl(\bs v-\bs v_h)) \\
%&\lesssim \eta_1(\bs w_h, \bs f,\mathcal T_h)\inf_{\bs v_h\in \bs V_h^c}\left(\sum_{K\in\mathcal T_h}h_K^{-2}\|\curl(\bs v-\bs v_h)\|_{0,K}^2\right)^{1/2} \\
%&\lesssim \eta_1(\bs w_h, \bs f,\mathcal T_h)|\curl\bs v|_{1}=\eta_1(\bs w_h, \bs f,\mathcal T_h)|\bs r|_{1}.
%\end{align*}
\begin{align*}
(\curl(\bs w-\bs w_h), \curl\bs v)&=\inf_{\bs v_h\in \bs V_h^c}(\curl(\bs w-\bs w_h), \curl(\bs v-\bs v_h)) \\
&\leq \|\curl(\bs w-\bs w_h)\|_0\inf_{\bs v_h\in \bs V_h^c}\|\curl(\bs v-\bs v_h)\|_0 \\
&\lesssim h\eta_1(\bs w_h, \bs f,\mathcal T_h)|\curl\bs v|_{1}= h\eta_1(\bs w_h, \bs f,\mathcal T_h)|\bs r|_{1}.
\end{align*}
According to \eqref{eq:errorestimateIhs}, we have
\[
(\curl\bs w_h, \bs r-\bs I_h^{s}\bs r)\lesssim \eta_2(\bs\phi_h, \bs w_h,\mathcal T_h)|\bs r|_1.
\]
And we get from \eqref{eq:scottzhangestimate} that
\begin{align*}
-(\bs\nabla_h\bs \phi_h, \bs\curl(\bs s-\bs I_h^{SZ}\bs s))&=\sum_{K\in\mathcal T_h}(\bs n\times(\bs\nabla_h\bs \phi_h), \bs s-\bs I_h^{SZ}\bs s)_{\partial K}  \\
&=\sum_{F\in\mathcal F_h}(\llbracket\bs n_F\times(\bs\nabla_h\bs \phi_h)\rrbracket, \bs s-\bs I_h^{SZ}\bs s)_{F} \\
&\lesssim \eta_2(\bs\phi_h, \bs w_h,\mathcal T_h)|\bs s|_1.
\end{align*}
Combining the last three inequalities and \eqref{eq:20200201-2}, we get from \eqref{eq:decompL2tensor} that
\begin{equation*}%\label{eq:errorposterioriphih}
|\bs \phi-\bs \phi_h|_{1,h} \lesssim h \eta_1(\bs w_h, \bs f,\mathcal T_h)+\eta_2(\bs\phi_h, \bs w_h,\mathcal T_h),
\end{equation*}
which together with \eqref{eq:errorposterioriwhreliability} indicates \eqref{eq:errorposterioriphihph}.

On the other side, by applying the bubble function techniques, we get
\begin{equation*}%\label{eq:curlwhefficiency}
\sum_{K\in\mathcal T_h}h_K^2\|\curl\bs w_h\|_{0,K}^2\lesssim h^2\|\curl (\bs w-\bs w_h)\|_0^2+|\bs \phi-\bs \phi_h|_{1,h}^2+\|p-p_h\|_0^2,
\end{equation*}
\[
\sum_{F\in\mathcal F_h}h_F\|\llbracket\bs n_F\times(\bs\nabla_h\bs \phi_h)\rrbracket\|_{0,F}^2\lesssim |\bs \phi-\bs \phi_h|_{1,h}^2.
\]
Combining the last two inequalities shows
\[
\eta_2(\bs\phi_h, \bs w_h,\mathcal T_h)\lesssim h\|\curl (\bs w-\bs w_h)\|_0+|\bs \phi-\bs \phi_h|_{1,h}+\|p-p_h\|_0.
\]
Therefore we conclude \eqref{eq:errorposterioriphihphefficientcy} from \eqref{eq:errorposterioriwhefficiency}.
\end{proof}

% \LC{We have given the L2 error estimate before. Simply change the index.}
By the a priori $L^2$-estimate of the Stokes problem, when we assume the $H^{1+s}$-regularity ($s\in (1/2,1]$) for the possible non-smooth solution, the following estimate, combining with  Lemmas \ref{lem:errorposterioriwh} and \ref{lem:errorposterioriphihph}, can be used for a global reliability bound for $\|\bs \phi-\bs \phi_h\|_0$.
\begin{lemma}\label{lem:errorposterioriphihphL2}
Let $(\bs w, 0, \bs \phi, p)\in \bs H_0(\curl, \Omega)\times H_0^{1}(\Omega)\times\boldsymbol H_0^1(\Omega; \mathbb{R}^3)\times L_0^{2}(\Omega)$ be the solution of the variational formulation \eqref{eq:quarticcurldecouple1}-\eqref{eq:quarticcurldecouple4}, and $(\bs w_h, 0, \bs \phi_h, p_h)\in \bs V_h^c\times V_h^1\times\bs V_h^{\rm CR}\times \mathcal Q_h$ the solution of the mixed method \eqref{eq:mfem1}-\eqref{eq:mfem4}.
We have
\begin{equation*}%\label{eq:errorposterioriphihphL2}
\|\bs \phi-\bs \phi_h\|_0\lesssim h^s|\bs \phi-\bs \phi_h|_{1,h}+h\|\curl(\bs w-\bs w_h)\|_0
+h^{1+s}\|\curl\bs w_h\|_0.
\end{equation*}
\end{lemma}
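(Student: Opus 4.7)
The plan is to mimic the duality (Aubin--Nitsche) argument already carried out in the earlier $L^2$-estimate (the proof of \eqref{eq:phihL2error}), but now with reduced $H^{1+s}$ regularity of the dual Stokes problem in place of full $H^2$. Consider the dual problem: find $(\hat{\bs\phi},\hat p)\in \bs H_0^1(\Omega;\mathbb R^3)\times L_0^2(\Omega)$ solving
\[
-\Delta\hat{\bs\phi}+\nabla\hat p=\bs\phi-\bs\phi_h,\qquad \div\hat{\bs\phi}=0,
\]
and invoke the assumed $H^{1+s}$-regularity to obtain $\|\hat{\bs\phi}\|_{1+s}+\|\hat p\|_{s}\lesssim \|\bs\phi-\bs\phi_h\|_0$. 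Since $\div_h(\bs\phi-\bs\phi_h)=0$, test the dual equation against $\bs\phi-\bs\phi_h$ and integrate by parts element-wise to split
\[
\|\bs\phi-\bs\phi_h\|_0^2=(\bs\nabla_h(\bs\phi-\bs\phi_h),\bs\nabla\hat{\bs\phi}) + \sum_{K\in\mathcal T_h}(\bs\phi-\bs\phi_h,\,\hat p\bs n-\partial_n\hat{\bs\phi})_{\partial K},
\]
which is the analogue of \eqref{eq:20200918-1}.

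For the boundary contribution I would use the zero-mean property of the Crouzeix--Raviart jumps on each face, subtract a face-wise constant in the trace of $\hat p\bs n-\partial_n\hat{\bs\phi}$, and apply trace/approximation estimates together with the reduced regularity; the interpolation order of $s$ (rather than $1$) on the dual functions gives
\[
\sum_{K\in\mathcal T_h}(\bs\phi-\bs\phi_h,\,\hat p\bs n-\partial_n\hat{\bs\phi})_{\partial K}\lesssim h^{s}|\bs\phi-\bs\phi_h|_{1,h}\,(\|\hat{\bs\phi}\|_{1+s}+\|\hat p\|_s).
\]
For the volume term, use $\bs I_h^s$ (which is divergence-preserving modulo constants, cf.~\eqref{eq:errorestimateIhs1}) and equations \eqref{eq:quarticcurldecouple3} and \eqref{eq:mfem3} to rewrite
\[
(\bs\nabla_h(\bs\phi-\bs\phi_h),\bs\nabla\hat{\bs\phi})=(\curl(\bs w-\bs w_h),\hat{\bs\phi})+(\curl\bs w_h,\hat{\bs\phi}-\bs I_h^s\hat{\bs\phi}).
\]
The second summand is handled directly by \eqref{eq:errorestimateIhs} with $j$ adjusted so that we obtain $h^{1+s}\|\curl\bs w_h\|_0\,|\hat{\bs\phi}|_{1+s}$. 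For the first summand, since $\hat{\bs\phi}\in\bs H_0^1$ is divergence-free, introduce the $\bs H_0^1$ stream function $\hat{\bs v}$ with $\curl\hat{\bs v}=\hat{\bs\phi}$ and $\|\hat{\bs v}\|_1\lesssim\|\hat{\bs\phi}\|_0$ as in the proof of Lemma~\ref{lem:estimate-phitilde}, and then use the Galerkin orthogonality \eqref{eq:GalerkinOrthcurl} to insert $\curl\bs v_h$ and apply the standard edge-element interpolation estimate using that $|\curl\hat{\bs v}|_1=|\hat{\bs\phi}|_1\lesssim \|\hat{\bs\phi}\|_{1+s}$, yielding
\[
(\curl(\bs w-\bs w_h),\hat{\bs\phi})\lesssim h\|\curl(\bs w-\bs w_h)\|_0\,\|\hat{\bs\phi}\|_{1+s}.
\]

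Collecting the three bounds, dividing by $\|\bs\phi-\bs\phi_h\|_0$ (which absorbs the dual-regularity factor on the right), and rearranging yields the claimed inequality. The main technical obstacle will be the careful trace/consistency step: one must extract exactly $h^s$ from the Crouzeix--Raviart consistency error when only $H^{1+s}$-regularity of the dual velocity/pressure is available; this requires a fractional version of the standard bound used in \eqref{eq:20200918-3}, typically via a Scott--Zhang type correction together with a scaled trace inequality. Apart from this, the remaining steps are essentially a careful re-run of the argument in the proof of \eqref{eq:phihL2error}, with every occurrence of $h$ coming from dual regularity replaced by $h^s$.
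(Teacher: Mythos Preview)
The paper does not give an explicit proof of this lemma; it only remarks, in the sentence preceding the statement, that the bound follows from ``the a priori $L^2$-estimate of the Stokes problem, when we assume the $H^{1+s}$-regularity ($s\in(1/2,1]$)''. Your proposal carries this out in detail and is precisely the intended argument: it reruns the duality proof of \eqref{eq:phihL2error} with $H^{1+s}$ dual regularity in place of $H^2$, which is what the paper has in mind.
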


\section{Quasi-orthogonality}
\label{sec:quasi-orthogonality}

In this section we will develop the quasi-orthogonality of the decoupled mixed finite element method.

%\subsection{Nonconforming discrete Stokes complex in three dimensions}
\subsection{Discrete complexes in three dimensions}
First recall a nonconforming discretization of the following Stokes complex in three dimensions \cite{Huang2020} %\cite[(4.16)]{JohnLinkeMerdonNeilanEtAl2017}
%\cite{EvansHughes2013}
\begin{equation*}%\label{eq:Stokescomplex3d}
0\autorightarrow{}{} H_0^1(\Omega)\autorightarrow{$\nabla$}{} \bs H_0(\grad\curl,\Omega) \autorightarrow{$\curl$}{} \boldsymbol H_0^1(\Omega; \mathbb{R}^3) \autorightarrow{$\div$}{} L_0^{2}(\Omega)\autorightarrow{}{}0,
\end{equation*}
where $\bs H_0(\grad\curl,\Omega):=\{\bs v\in\bs H_0(\curl, \Omega): \curl\bs v\in \boldsymbol H_0^1(\Omega; \mathbb{R}^3)\}$. Note that $\bs H_0(\grad\curl,\Omega)=\bs H_0(\curlcurl, \Omega)$ in \eqref{eq:space-h2curl} (cf. \cite{Zhang2018a}).
%We refer to \cite{BeiraodaVeigaDassiVacca2019} for a conforming virtual element Stokes complex based on the complex \eqref{eq:Stokescomplex3d}.

The space of the shape functions of the $\bs H(\grad\curl)$ nonconforming element proposed in \cite{Huang2020} is $\mathbb P_{0}(K;\mathbb R^3) \oplus \bs x\wedge\mathbb P_{1}(K;\mathbb R^3)$, and the local degrees of freedom are given by
\begin{align}
\int_e\bs v\cdot\bs t_e\dd s & \quad \textrm{ on each }  e\in\mathcal E(K), \label{dof1}\\
\int_F(\curl\bs v)\cdot\bs t_{F,i}\dd s & \quad \textrm{ on each }  F\in\mathcal F(K) \textrm{ with } i=1,2. \label{dof2} 
\end{align}
The global $\bs H(\grad\curl)$ nonconforming element space is then defined as
\begin{align*}
\bs W_h:=\{\bs v_h\in \boldsymbol L^2(\Omega; \mathbb{R}^3):&\, \bs v_h|_K\in\mathbb P_{0}(K;\mathbb R^3) \oplus \bs x\wedge\mathbb P_{1}(K;\mathbb R^3) \textrm{ for each } K\in\mathcal T_h, \\
&\textrm{ all the degree of freedom \eqref{dof1}-\eqref{dof2} are single-valued}, \\
&\textrm{ and all the degree of freedom \eqref{dof1}-\eqref{dof2} on $\partial \Omega$ vanish}\}.
\end{align*}
Now the nonconforming discrete Stokes complex in \cite{Huang2020} is presented as
\begin{equation}\label{eq:Stokescomplex3dncfem}
0\autorightarrow{}{} V_h^1\autorightarrow{$\nabla$}{} \bs W_h \autorightarrow{$\curl_h$}{} \bs V_h^{\rm CR} \autorightarrow{$\div_h$}{}  \mathcal Q_h\autorightarrow{}{}0.
\end{equation}

%\begin{equation}\label{eq:cdhermiteHd03dintro}
%\resizebox{.91\hsize}{!}{$
%\begin{array}{c}
%\xymatrix{
%0 \ar[r]^{} & \ar[d]^{\Pi_h^{\grad}}  H_0^{1}(\Omega) \ar[r]^-{\nabla} & H_0(\curl,\Omega) \ar[d]^{\Pi_h^{\curl}} \ar[r]^-{\curl}
%                & H_0(\div,\Omega) \ar[d]^{\Pi_h^{\div}}   \ar[r]^-{\div} & \ar[d]^{\Pi_h^{0}}L_0^2(\Omega) \ar[r]^{} & 0 \\
%0 \ar[r]^{} &  \mathring{H}_{k+3}^{\grad}(\mathcal T_h) \ar[r]^-{\nabla} & \mathring{H}_{k+2}^{\curl}(\mathcal T_h) \ar[r]^-{\curl}
%                & \mathring{H}_{k+1}^{\div}(\mathcal T_h)   \ar[r]^-{\div} &  \mathring{H}_{k}^{0}(\mathcal T_h) \ar[r]^{}& 0    }
%\end{array}
%$}
%\end{equation}

We also need the help of a discrete de Rham complex.
Recall the lowest-order Raviart-Thomas element space \cite{RaviartThomas1977,Nedelec1980}
\begin{equation*}
%\label{eq:space-rt}
  \bs V_h^d:=\{\bs v_h\in \bs H_0(\div, \Omega): \bs v_h|_K\in\mathbb P_{0}(K;\mathbb R^3)+\bs x\mathbb P_{0}(K) \textrm{ for each } K\in\mathcal T_h\},
\end{equation*}
and the discrete de Rham complex \cite{Arnold2018,ArnoldFalkWinther2006}
\[
0\autorightarrow{}{} V_h^1\autorightarrow{$\nabla$}{} \bs V_h^c \autorightarrow{$\curl$}{} \bs V_h^d \autorightarrow{$\div$}{}  \mathcal Q_h\autorightarrow{}{}0.
\]
Let $\bs I_h^{c}$ be the nodal interpolation operator from $\textrm{Dom}(\bs I_h^{c})$ to $\bs V_h^c$, and $\bs I_h^{d}$ the nodal interpolation operator from $\textrm{Dom}(\bs I_h^{d})$ to $\bs V_h^d$, where $\textrm{Dom}(\bs I_h^{c})$ and $\textrm{Dom}(\bs I_h^{d})$ are the domains of the operators $\bs I_h^{c}$ and $\bs I_h^{d}$ respectively.
It holds for any $\bs v_h\in \bs W_h$ that
\begin{equation*}%\label{eq:errorestimateIcW}
\|\curl_h(\bs v_h-\bs I_h^{c}\bs v_h)\|_{0,K}=\|\curl_h\bs v_h-\bs I_h^{d}(\curl_h\bs v_h)\|_{0,K}\lesssim h_K|\curl_h\bs v_h|_{1,K}.
\end{equation*}

Henceforth, consider two nested conforming triangulations $\mathcal T_h$ and $\mathcal T_H$, where $\mathcal T_h$ is a refinement of $\mathcal T_H$.
We have the commutative diagram property \cite{Arnold2018,ArnoldFalkWinther2006}
\begin{equation}\label{eq:cd2grid}
\curl(\bs I_H^{c}\bs v_h)= \bs I_H^{d}(\curl_h\bs v_h)\quad\forall~\bs v_h\in \bs V_h^c+\bs W_h.
\end{equation}
To derive the quasi-orthogonality,  we need the following interpolation error estimation for $\bs I_H^{s}$ \cite{HuXu2013}
\begin{equation}\label{eq:errorestimate2gridIs}
\|\bs\psi_h-\bs I_H^{s}\bs\psi_h\|_{0,K}\lesssim h_K\|\nabla_h\bs\psi_h\|_{0,K}\quad\forall~\bs\psi_h\in \bs V_h^{\rm CR}, K\in\mathcal T_H\backslash\mathcal T_h.
\end{equation}
%Following the similar idea in \cite{HuXu2013}, we get from \eqref{eq:cd2grid} that
%\begin{equation}\label{eq:errorestimate2gridIc}
%\|\curl(\bs v_h-\bs I_H^{c}\bs v_h)\|_{0,K}\lesssim h_K|\curl\bs v_h|_{1,K}\quad\forall~\bs v_h\in \bs V_h^c, K\in\mathcal T_H\backslash\mathcal T_h.
%\end{equation}

According to \eqref{eq:mfem1}, since the triangulations $\mathcal T_h$ and $\mathcal T_H$ are nested, we get the following Galerkin orthogonality
\begin{equation}\label{eq:GalerkinOrthcurl2grid}
(\curl (\bs w_h-\bs w_H), \curl \bs v_H)=0\quad \forall~\bs v_H\in \bs V_H^c.
\end{equation}

\begin{lemma}\label{lem:IHdVsh}
It holds
\begin{equation}\label{eq:errorestimate2gridId}
\sum_{K\in\mathcal T_H}h_K^{-2}\|\bs\psi_h-\bs I_H^{d}\bs\psi_h\|_{0,K}^2\lesssim |\bs\psi_h|_{1,h}^2\quad\forall~\bs\psi_h\in \bs V_h^{\rm CR}.
\end{equation}
\end{lemma}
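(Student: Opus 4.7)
The plan is to route through the coarse Crouzeix--Raviart interpolant $\bs I_H^s\bs\psi_h$, whose restriction to each $K\in\mathcal T_H$ is an affine polynomial, so that the standard Raviart--Thomas approximation estimate can be applied on the coarse mesh.

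First I would check that $\bs I_H^s\bs\psi_h$ is well defined on $\bs V_h^{\rm CR}$: on every coarse face $F\in\mathcal F_H$ the average $\int_F\bs\psi_h\,ds$ is single-valued, because the zero-mean jump condition on the fine sub-faces comprising $F$ sums to $\int_F\llbracket\bs\psi_h\rrbracket\,ds=0$. The same face-by-face accounting shows that the coarse-face normal averages of $\bs I_H^s\bs\psi_h$ and $\bs\psi_h$ coincide, hence by the definition of the Raviart--Thomas nodal interpolation
\[
\bs I_H^d(\bs I_H^s\bs\psi_h)=\bs I_H^d\bs\psi_h.
\]

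Then I would split, for each $K\in\mathcal T_H$,
\[
\bs\psi_h-\bs I_H^d\bs\psi_h=(\bs\psi_h-\bs I_H^s\bs\psi_h)+\bigl(\bs I_H^s\bs\psi_h-\bs I_H^d(\bs I_H^s\bs\psi_h)\bigr).
\]
The first summand vanishes on $K\in\mathcal T_H\cap\mathcal T_h$ because $\bs\psi_h|_K$ is already affine, and on $K\in\mathcal T_H\setminus\mathcal T_h$ it is controlled by the quoted estimate \eqref{eq:errorestimate2gridIs}. For the second summand I would invoke the textbook Raviart--Thomas approximation bound $\|\bs I_H^s\bs\psi_h-\bs I_H^d(\bs I_H^s\bs\psi_h)\|_{0,K}\lesssim h_K|\bs I_H^s\bs\psi_h|_{1,K}$, which is legitimate since $\bs I_H^s\bs\psi_h|_K$ is a polynomial. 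To reduce the right-hand side to the broken seminorm of $\bs\psi_h$, I would integrate by parts on $K$: using the interior CR jump condition on fine sub-faces and the coarse-face moment matching just established, one obtains $(\nabla(\bs I_H^s\bs\psi_h),\bs\tau)_K=(\nabla_h\bs\psi_h,\bs\tau)_K$ for every constant tensor $\bs\tau\in\mathbb P_0(K;\mathbb M)$, so $\nabla(\bs I_H^s\bs\psi_h)|_K$ is the $L^2$-projection of $\nabla_h\bs\psi_h|_K$ onto constants and therefore $|\bs I_H^s\bs\psi_h|_{1,K}\le|\bs\psi_h|_{1,h,K}$.

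Combining the two summand bounds gives $\|\bs\psi_h-\bs I_H^d\bs\psi_h\|_{0,K}\lesssim h_K|\bs\psi_h|_{1,h,K}$ for every $K\in\mathcal T_H$, and the conclusion follows by dividing by $h_K^2$ and summing. The main obstacle I foresee is the identity $\bs I_H^d(\bs I_H^s\bs\psi_h)=\bs I_H^d\bs\psi_h$ together with its close relative, the $\bs\tau$-orthogonality for $\nabla(\bs I_H^s\bs\psi_h)$; both rest on the bookkeeping that the sum of zero-mean jumps over the fine sub-faces of a coarse face is itself zero-mean across the coarse face, so this is the only point where the interplay between the two meshes really has to be argued carefully -- the rest is scaling and \eqref{eq:errorestimate2gridIs}.
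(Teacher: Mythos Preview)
Your argument is correct and complete, but it differs from the paper's proof. The paper does not route through the coarse Crouzeix--Raviart interpolant $\bs I_H^s\bs\psi_h$; instead it invokes the averaging technique of Brenner/Huang--Huang to produce a conforming lift $\widetilde{\bs\psi}_h\in\bs V_h^1\subset\boldsymbol H_0^1(\Omega;\mathbb R^3)$ with the quantitative control $\sum_K\bigl(h_K^{-2}\|\bs\psi_h-\widetilde{\bs\psi}_h\|_{0,K}^2+h_K^{-1}\|\bs\psi_h-\widetilde{\bs\psi}_h\|_{0,\partial K}^2\bigr)\lesssim|\bs\psi_h|_{1,h}^2$, and then splits $\bs\psi_h-\bs I_H^d\bs\psi_h$ into the three pieces $\bs\psi_h-\widetilde{\bs\psi}_h$, $\widetilde{\bs\psi}_h-\bs I_H^d\widetilde{\bs\psi}_h$, and $\bs I_H^d(\widetilde{\bs\psi}_h-\bs\psi_h)$; the middle piece is the standard $H^1$ Raviart--Thomas estimate, while the last one is bounded by scaling through the face norms $\|(\bs\psi_h-\widetilde{\bs\psi}_h)\cdot\bs n\|_{0,\partial K}$. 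Your route is more self-contained: it stays entirely within objects already defined in the paper (only \eqref{eq:errorestimate2gridIs} and the textbook RT estimate), avoids importing the averaging operator, and actually delivers a genuinely local bound $\|\bs\psi_h-\bs I_H^d\bs\psi_h\|_{0,K}\lesssim h_K|\bs\psi_h|_{1,h,K}$ per coarse element. The price is the face-bookkeeping you flag (the zero-mean jump argument on the fine sub-faces of a coarse face, needed both for the well-definedness of $\bs I_H^s\bs\psi_h$ and for the identity $\bs I_H^d(\bs I_H^s\bs\psi_h)=\bs I_H^d\bs\psi_h$), which the paper sidesteps by going through an $H^1$-conforming intermediary. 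Both approaches are sound; yours is arguably the more elementary.
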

\begin{proof}
By the averaging technique \cite{Brenner1996,HuangHuang2011}, there exists $\widetilde{\bs\psi}_h\in\bs V_h^1:=V_h^1\otimes\mathbb R^3$ such that
\begin{equation}\label{eq:20200307-1}
|\widetilde{\bs\psi}_h|_{1}^2+\sum_{K\in\mathcal T_h}\left(h_K^{-2}\|\bs\psi_h-\widetilde{\bs\psi}_h\|_{0,K}^2+h_K^{-1}\|\bs\psi_h-\widetilde{\bs\psi}_h\|_{0,\partial K}^2\right)\lesssim |\bs\psi_h|_{1,h}^2.
\end{equation}
Applying the scaling argument, we get
\begin{align*}
\sum_{K\in\mathcal T_H}h_K^{-2}\|\bs I_H^{d}(\bs\psi_h-\widetilde{\bs\psi}_h)\|_{0,K}^2&\lesssim \sum_{K\in\mathcal T_H}h_K^{-1}\|(\bs\psi_h-\widetilde{\bs\psi}_h)\cdot\bs n\|_{0,\partial K}^2 \\
&\leq \sum_{K\in\mathcal T_H}h_K^{-1}\Bigg(\sum_{K'\in\mathcal T_h(K)}\|\bs\psi_h-\widetilde{\bs\psi}_h\|_{0,\partial K'}^2\Bigg) \\
&\leq \sum_{K\in\mathcal T_h}h_K^{-1}\|\bs\psi_h-\widetilde{\bs\psi}_h\|_{0,\partial K}^2.
\end{align*}
Noting that $\widetilde{\bs\psi}_h\in \boldsymbol{H}_0^1(\Omega;\mathbb R^3)$, it follows
\[
\sum_{K\in\mathcal T_H}h_K^{-2}\|\widetilde{\bs\psi}_h-\bs I_H^{d}\widetilde{\bs\psi}_h\|_{0,K}^2\lesssim |\widetilde{\bs\psi}_h|_{1,h}^2.
\]
Combining the last two inequalities and \eqref{eq:20200307-1} yields
\begin{align*}
\sum_{K\in\mathcal T_H}h_K^{-2}\left(\|\bs I_H^{d}(\bs\psi_h-\widetilde{\bs\psi}_h)\|_{0,K}^2+\|\bs\psi_h-\widetilde{\bs\psi}_h\|_{0,K}^2+\|\widetilde{\bs\psi}_h-\bs I_H^{d}\widetilde{\bs\psi}_h\|_{0,K}^2\right)\lesssim |\bs\psi_h|_{1,h}^2.
\end{align*}
On the other hand, we have
\[
\|\bs\psi_h-\bs I_H^{d}\bs\psi_h\|_{0,K}\leq \|\bs I_H^{d}(\bs\psi_h-\widetilde{\bs\psi}_h)\|_{0,K}+\|\bs\psi_h-\widetilde{\bs\psi}_h\|_{0,K}+\|\widetilde{\bs\psi}_h-\bs I_H^{d}\widetilde{\bs\psi}_h\|_{0,K}.
\]
%\begin{align*}
%\|\bs\psi_h-\bs I_H^{d}\bs\psi_h\|_{0,K}&\leq \|\bs\psi_h-\widetilde{\bs\psi}_h-\bs I_H^{d}(\bs\psi_h-\widetilde{\bs\psi}_h)\|_{0,K}+\|\widetilde{\bs\psi}_h-\bs I_H^{d}\widetilde{\bs\psi}_h\|_{0,K} \\
%&\leq \|\bs I_H^{d}(\bs\psi_h-\widetilde{\bs\psi}_h)\|_{0,K}+\|\bs\psi_h-\widetilde{\bs\psi}_h\|_{0,K}+\|\widetilde{\bs\psi}_h-\bs I_H^{d}\widetilde{\bs\psi}_h\|_{0,K}.
%\end{align*}
Therefore we conclude \eqref{eq:errorestimate2gridId} from the last two inequalities.
\end{proof}

\subsection{Quasi-orthogonality}

\begin{lemma}[quasi-orthogonality of $\bs\phi$]
Let $(\bs w_h, 0, \bs \phi_h, p_h)\in \bs V_h^c\times V_h^1\times\bs V_h^{\rm CR}\times \mathcal Q_h$ and $(\bs w_H, 0, \bs \phi_H, p_H)\in \bs V_H^c\times V_h^1\times\bs V_h^{\rm CR}\times \mathcal Q_H$ the solutions of the mixed method \eqref{eq:mfem1}-\eqref{eq:mfem4} on triangulations $\mathcal T_h$ and $\mathcal T_H$ respectively.
We have
\begin{align}
(\bs\nabla_h(\bs\phi-\bs\phi_h), \bs\nabla_h(\bs\phi_h-\bs\phi_H))\lesssim & H|\bs\phi-\bs\phi_h|_{1,h}\|\curl(\bs w_h-\bs w_H)\|_0 \notag\\
 &+ |\bs\phi-\bs\phi_h|_{1,h}\Bigg(\sum_{K\in\mathcal T_H\backslash\mathcal T_h}h_K^2\|\curl\bs w_H\|_{0,K}^2\Bigg)^{1/2}. \label{eq:phiHphihORerror}
\end{align}
\end{lemma}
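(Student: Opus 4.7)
The plan is to adapt the a posteriori analysis of Lemma~\ref{lem:errorposterioriphihph} to the two-mesh setting. I will apply the tensor decomposition of Lemma~\ref{lem:decompL2tensor} to $\bs\nabla_h(\bs\phi-\bs\phi_h)$, pair it against $\bs\nabla_h(\bs\phi_h-\bs\phi_H)$, and reduce each resulting term to a Maxwell-level residual that can be controlled by the two-grid Galerkin orthogonality \eqref{eq:GalerkinOrthcurl2grid} together with the two-mesh interpolation estimates \eqref{eq:errorestimate2gridIs} and Lemma~\ref{lem:IHdVsh}.

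Concretely, write $\bs\nabla_h(\bs\phi-\bs\phi_h) = \bs\nabla\bs r - q\bs I + \bs\curl\bs s$ with $\bs r = \curl\bs v \in \boldsymbol H_0^1(\Omega;\mathbb R^3)$ divergence free, $q\in L_0^2(\Omega)$, $\bs s\in \boldsymbol H^1(\Omega;\mathbb M)$, and all norms bounded by $|\bs\phi-\bs\phi_h|_{1,h}$. The trace term vanishes since $\div_h\bs\phi_h = 0$ by \eqref{eq:mfem4} while $\div_h\bs\phi_H = \div_H\bs\phi_H = 0$ (the elementwise divergence of a piecewise $P_1$ function on $\mathcal T_H$ is constant and hence preserved under refinement). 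For the $\bs\nabla\bs r$ piece, because $\bs\nabla_h\bs\phi_h$ and $\bs\nabla_H\bs\phi_H$ are piecewise constant, \eqref{eq:errorestimateIhs1} lets me replace $\bs\nabla\bs r$ by $\bs\nabla_h\bs I_h^s\bs r$ (respectively $\bs\nabla_H\bs I_H^s\bs r$) when paired against these gradients. Applying \eqref{eq:mfem3} on both meshes --- the pressure terms drop since $\div_h\bs I_h^s\bs r = 0 = \div_H\bs I_H^s\bs r$ by the discrete exactness --- and subtracting produces the key identity
\[
(\bs\nabla\bs r,\,\bs\nabla_h(\bs\phi_h - \bs\phi_H)) = (\curl(\bs w_h - \bs w_H),\,\bs I_h^s\bs r) + (\curl\bs w_H,\,\bs I_h^s\bs r - \bs I_H^s\bs r).
\]

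The first term on the right gives the $H\|\curl(\bs w_h-\bs w_H)\|_0\,|\bs\phi-\bs\phi_h|_{1,h}$ contribution: the nonconforming Stokes complex \eqref{eq:Stokescomplex3dncfem} supplies $\tilde{\bs v}_h\in\bs W_h$ with $\curl_h\tilde{\bs v}_h = \bs I_h^s\bs r$, and the commuting diagram \eqref{eq:cd2grid} identifies $\curl(\bs I_H^c\tilde{\bs v}_h) = \bs I_H^d\bs I_h^s\bs r$; the two-grid Maxwell orthogonality \eqref{eq:GalerkinOrthcurl2grid} removes this conforming piece, after which Lemma~\ref{lem:IHdVsh} supplies the extra $H$-factor. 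The second term collapses to a sum over $\mathcal T_H\setminus\mathcal T_h$: on any unrefined $K\in\mathcal T_h\cap\mathcal T_H$ the defining face integrals of $\bs I_h^s\bs r|_K$ and $\bs I_H^s\bs r|_K$ coincide, while summing the fine-face integrals over each coarse face forces $\bs I_H^s(\bs I_h^s\bs r) = \bs I_H^s\bs r$; then \eqref{eq:errorestimate2gridIs} applied to $\bs I_h^s\bs r\in\bs V_h^{\rm CR}$ yields the needed $h_K$-factor and Cauchy--Schwarz assembles the oscillation sum.

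The hard part will be the residual $\bs\curl\bs s$ contribution. Element-wise integration by parts turns it into a sum of jumps of $\bs\nabla_h\bs\phi_h$ and $\bs\nabla_h\bs\phi_H$ across faces of $\mathcal T_h$ paired against $\bs s$. On fine faces interior to a coarse cell the jump of $\bs\nabla_h\bs\phi_H$ is zero, leaving only the fine-mesh consistency error, which is bounded via the tensorial Scott--Zhang interpolation \eqref{eq:scottzhangestimate} and the CR condition for $\bs\phi_h$. On sub-faces of coarse faces both jumps are present, and the delicate task is to bookkeep them so the contribution fits into the refined-element oscillation sum appearing in \eqref{eq:phiHphihORerror}; here I plan to exploit that $\bs\phi_H$ is $P_1$ on each coarse element, so jumps of $\bs\nabla_h\bs\phi_H$ are locally constant across such sub-faces, and to combine this with \eqref{eq:errorestimate2gridIs}--\eqref{eq:errorestimate2gridId} in order to avoid producing a stray $|\bs\phi_h-\bs\phi_H|_{1,h}$ or a full-mesh estimator term that would otherwise break the quasi-orthogonality structure.
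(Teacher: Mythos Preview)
Your handling of the $\bs\nabla\bs r$ piece is essentially the paper's argument (with $\bs I_h^{s}\bs r$ playing the role of the test function), and the trace piece is fine. The gap is the $\bs\curl\bs s$ contribution, which you correctly flag as ``the hard part'' but for which you do not have a workable plan. Integration by parts turns
\[
(\bs\curl\bs s,\,\bs\nabla_h(\bs\phi_h-\bs\phi_H))
\]
into face-jump pairings of $\bs s$ against $\llbracket\bs n_F\times\bs\nabla_h\bs\phi_h\rrbracket$ and $\llbracket\bs n_F\times\bs\nabla_H\bs\phi_H\rrbracket$. After subtracting the Scott--Zhang interpolants, these are bounded by the jump parts of $\eta_2(\bs\phi_h,\cdot,\mathcal T_h)$ and $\eta_2(\bs\phi_H,\cdot,\mathcal T_H)$ times $|\bs s|_1\lesssim|\bs\phi-\bs\phi_h|_{1,h}$. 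Those are full-mesh estimator terms of the same order as the error itself, \emph{not} the localized oscillation $\big(\sum_{K\in\mathcal T_H\setminus\mathcal T_h}h_K^2\|\curl\bs w_H\|_{0,K}^2\big)^{1/2}$ demanded by \eqref{eq:phiHphihORerror}. Note also that $\bs\phi_H\notin\bs V_h^{\rm CR}$ in general (coarse face averages match, but averages over fine sub-faces of a coarse face need not), so the cancellation $(\bs\curl\bs I_h^{SZ}\bs s,\bs\nabla_h\bs\phi_H)=0$ is unavailable and your ``bookkeeping'' cannot rescue the argument.

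The paper sidesteps this entirely by not invoking Lemma~\ref{lem:decompL2tensor}. Since $\bs\nabla_h\bs\phi_h+p_h\bs I$ and $\bs\nabla_H\bs\phi_H+p_H\bs I$ are both piecewise constant on $\mathcal T_h$, property \eqref{eq:errorestimateIhs1} lets one replace $\bs\phi-\bs\phi_h$ \emph{exactly} by its Crouzeix--Raviart interpolant $\bs\psi_h:=\bs I_h^{s}(\bs\phi-\bs\phi_h)$ in the left-hand side (after inserting the pressures via $\div_h(\bs\phi-\bs\phi_h)=0$). One then applies \eqref{eq:mfem3} on $\mathcal T_h$, uses \eqref{eq:errorestimateIhs1} once more on the coarse level to pass to $\bs I_H^{s}\bs\psi_h$, and applies \eqref{eq:mfem3} on $\mathcal T_H$, arriving directly at
\[
(\curl(\bs w_h-\bs w_H),\bs\psi_h)+(\curl\bs w_H,\bs\psi_h-\bs I_H^{s}\bs\psi_h).
\]
This is precisely your ``key identity'' but with $\bs\psi_h$ in place of $\bs I_h^{s}\bs r$ and \emph{no residual $\bs\curl\bs s$ term}. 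From here the estimate follows from the Stokes complex \eqref{eq:Stokescomplex3dncfem}, \eqref{eq:cd2grid}, \eqref{eq:GalerkinOrthcurl2grid}, Lemma~\ref{lem:IHdVsh}, and \eqref{eq:errorestimate2gridIs}, exactly as you outlined for the $\bs\nabla\bs r$ piece. The tensor decomposition is the right tool for the a~posteriori estimate in Lemma~\ref{lem:errorposterioriphihph}, where face-jump estimators are the target; for quasi-orthogonality it introduces a term you cannot absorb.
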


\begin{proof}
Let $\bs\psi_h=\bs I_h^{s}(\bs\phi-\bs\phi_h)$, then we get from \eqref{eq:errorestimateIhs1} and \eqref{eq:errorestimateIhs} that
\begin{equation}\label{eq:20200307-3}
\div_h\bs\psi_h=0,\quad |\bs\psi_h|_{1,h}\lesssim |\bs\phi-\bs\phi_h|_{1,h}.
\end{equation}
It follows from \eqref{eq:mfem4}, \eqref{eq:errorestimateIhs1} and \eqref{eq:mfem3} that
\begin{align*}
(\bs\nabla_h(\bs\phi-\bs\phi_h), \bs\nabla_h(\bs\phi_h-\bs\phi_H))&=(\bs\nabla_h(\bs\phi-\bs\phi_h), \bs\nabla_h\bs\phi_h+p_h\bs I-(\bs\nabla_H\bs\phi_H+p_H\bs I)) \\
&=(\bs\nabla_h\bs\phi_h+p_h\bs I-(\bs\nabla_H\bs\phi_H+p_H\bs I), \bs\nabla_h\bs\psi_h) \\
&= (\curl\bs w_h, \bs\psi_h) - (\bs\nabla_H\bs\phi_H+p_H\bs I, \bs\nabla_h\bs\psi_h) \\
&= (\curl\bs w_h, \bs\psi_h) - (\bs\nabla_H\bs\phi_H+p_H\bs I, \bs\nabla_h(\bs I_H^{s}\bs\psi_h)) \\
&= (\curl\bs w_h, \bs\psi_h) - (\curl\bs w_H, \bs I_H^{s}\bs\psi_h).
\end{align*}
Thus we have
\begin{align}
(\bs\nabla_h(\bs\phi-\bs\phi_h), \bs\nabla_h(\bs\phi_h-\bs\phi_H))=&\,(\curl(\bs w_h-\bs w_H), \bs\psi_h) \notag\\
&+ (\curl\bs w_H, \bs\psi_h-\bs I_H^{s}\bs\psi_h). \label{eq:20200306}
\end{align}
Next we estimate the right hand side of the last equation term by term.

Due to the discrete Stokes complex \eqref{eq:Stokescomplex3dncfem} and the fact $\div_h\bs \psi_h=0$, there exists $\bs v_h\in\bs W_h$ such that $\bs\psi_h=\curl_h\bs v_h$.
% Employing \eqref{eq:errorestimateIcW}, it follows
% \[
% (\curl(\bs w_h-\bs w_H), \curl(\bs v_h-\bs I_h^c\bs v_h))\leq h\|\curl(\bs w_h-\bs w_H)\|_0|\bs\psi_h|_{1,h}.
% \]
From \eqref{eq:GalerkinOrthcurl2grid} and \eqref{eq:cd2grid}, we have
\begin{align*}
(\curl(\bs w_h-\bs w_H), \bs\psi_h)&=(\curl(\bs w_h-\bs w_H), \curl_h\bs v_h) \\
&=(\curl(\bs w_h-\bs w_H), \curl_h(\bs v_h-\bs I_H^c\bs v_h)) \\
&=(\curl(\bs w_h-\bs w_H), \curl_h\bs v_h-\bs I_H^d\curl_h\bs v_h) \\
&=(\curl(\bs w_h-\bs w_H), \bs\psi_h-\bs I_H^d\bs\psi_h).
\end{align*}
Then it holds from \eqref{eq:errorestimate2gridId} that
\begin{equation*}
(\curl(\bs w_h-\bs w_H), \bs\psi_h)\lesssim H\|\curl(\bs w_h-\bs w_H)\|_0|\bs\psi_h|_{1,h}.
\end{equation*}
Thanks to \eqref{eq:errorestimate2gridIs}, we obtain
\begin{align*}
(\curl\bs w_H, \bs\psi_h-\bs I_H^{s}\bs\psi_h)&=\sum_{K\in\mathcal T_H\backslash\mathcal T_h}(\curl\bs w_H, \bs\psi_h-\bs I_H^{s}\bs\psi_h)_K \\
&\lesssim |\bs\psi_h|_{1,h}\Bigg(\sum_{K\in\mathcal T_H\backslash\mathcal T_h}h_K^2\|\curl\bs w_H\|_{0,K}^2\Bigg)^{1/2}.
\end{align*}
Finally we conclude from \eqref{eq:20200306}, the last two inequalities and \eqref{eq:20200307-3}. 
\end{proof}

\section{Convergence}\label{sec:convergence}
In this section, we propose an adaptive algorithm (Algorithm \ref{alg:afem}) based on the estimators in Section \ref{sec:estimator}. Then its convergence is proved using the quasi-orthogonality in Section \ref{sec:quasi-orthogonality}. 
The methodology in the convergence mainly follows that of \cite{HuXu2013}. There are two major modifications: 
one first needs to control the perturbation of data for the Stokes problem, then the convergence is proved only for $\|\curl(\bs u - \bs u_h) \|_0$ and $|\bs \phi - \bs \phi_h|_{1,h}$ without the Lagrange multiplier variable. Notice that if conforming stable Stokes pairs are used, the convergence of any adaptive finite element method for Stokes equation is extremely hard as the velocity and pressure are coupled together. 

For the ease of the readers, the following short notations are adopted throughout the proof of the contraction in Theorem \ref{theorem:convergence-afem} and the lemmas needed. For $\bs V_h$ defined on $\mathcal{T}_k$, we denote them by $\bs V^c_k$ and $\bs V^{\mathrm{CR}}_k$. Similarly, the approximations on $\mathcal{T}_k$ are denoted by $\bs w_k$, $\bs \phi_k$, and $\bs u_k$ respectively. Let $h_k:= \max\limits_{K\in \mathcal{T}_k} h_K$, and denote the quantities involving two consecutive levels of meshes as follows:
% Denote $\Phi_h := (\bs w_h, \bs\phi_h)$
%
\begin{align*}
& E_k := |\bs \phi - \bs \phi_k|_{1,k} := \left(\sum_{K\in \mathcal{T}_k}\|\nabla_{k} (\bs \phi - \bs\phi_k)\|_{0,K}^2\right)^{1/2},
\\
& R_{k+1} := |\bs \phi_k - \bs \phi_{k+1}|_{1,k+1} :=\left(\sum_{K\in \mathcal{T}_{k+1}}\|\nabla_{k+1} (\bs \phi_k - \bs \phi_{k+1})\|_{0,K}^2\right)^{1/2}, 
\\
& e_k := \|\curl (\bs w - \bs w_k)\|_0, \quad 
r_{k+1} := \|\curl (\bs w_{k} - \bs w_{k+1})\|_0,
\\
&\eta_{1,k}(\mathcal{M}) := \eta_1(\bs w_k, \bs f,\mathcal M), 
\\
&\eta_{2,k}(\mathcal{M}) := \eta_2(\bs \phi_k, \bs w_k, \mathcal M),
\\
&g_k(\mathcal{M}) := \Bigg(\sum_{K\in \mathcal M}h_K^2\|\curl\bs w_k\|_{0,K}^2\Bigg)^{1/2}, 
\end{align*}
where $\nabla_k$ stands for the discrete gradient $\nabla_h$ defined piecewisely on all $K\in \mathcal{T}_k$, and $\mathcal{M}$ can be $\mathcal{T}_k$, $\mathcal{T}_{k+1}$, or $\mathcal{T}_k\backslash \mathcal{T}_{k+1}$.

\begin{algorithm}[htb]
\caption{An adaptive nonconforming finite element method}
\label{alg:afem}
\begin{algorithmic}[1]
\Require $\mathcal{T}_0$, $\bs f$, $\texttt{tol}$, $\theta_1, \theta_2 \in (0, 1)$.
\Ensure $\mathcal{T}_N$, $\bs \phi_N$, $\bs u_N$.

\State{$\eta_1 = \texttt{tol}, \eta_2 = \texttt{tol}, k = 0.$}

% \While{$\eta_1\geqslant \texttt{tol}$ and $\eta_2\geqslant \texttt{tol}$}
\While{True}

\State{\textbf{SOLVE}:}
Solve \eqref{eq:mfem1}--\eqref{eq:mfem2} and \eqref{eq:mfem3}--\eqref{eq:mfem4} on $\mathcal{T}_k$ to 
get $(\bs w_k,\bs \phi_k)$;

\State{\textbf{ESTIMATE}:}  Compute $\eta_1(\bs w_k, \bs f,K)$ and $\eta_2(\bs \phi_k, \bs w_k, K)$ for all $K\in \mathcal T_k$

\State{\qquad\qquad\qquad\quad$\eta_1 \gets \eta_1(\bs w_k, \bs f, \mathcal T_k)$}
\State{\qquad\qquad\qquad\quad$\eta_2 \gets \eta_2(\bs \phi_k, \bs w_k,  \mathcal T_k)$}

\If{$\eta_1< \texttt{tol}$ and $\eta_2< \texttt{tol}$}
\State{Break}
\EndIf

\State{\textbf{MARK}:  } Seek a minimum $\mathcal{M} \subseteq 
\mathcal{T}_k$ such that
\begin{equation}
\label{eq:mark}
\tag{M}
\begin{aligned}
 \eta^2_1(\bs w_k, \bs f,\mathcal M)& \geq 
\theta_1 \eta_1^2(\bs w_k, \bs f, \mathcal T_k)
\\
\text{ and }\;\eta_2^2(\bs w_k, \bs\phi_k, \mathcal M)
&\geq \theta_2 \eta_2^2(\bs w_k, \bs\phi_k, \mathcal T_k)
\end{aligned}
\end{equation}

\State{\textbf{REFINE}:  } Bisect $K\in \mathcal{M}$ and their neighbors to form a conforming 
$\mathcal{T}_{k+1}$;

% \State{$\eta_1 \gets \eta_1(\bs w_k, \bs f, \mathcal T_k)$}
% \State{$\eta_2 \gets \eta_2(\bs \phi_k, \bs w_k,  \mathcal T_k)$}
% \State{$\mathcal T_H \gets \mathcal T_h$}.
\State{$k\gets k+1$}
\EndWhile
\State{$N\gets k$}
\State{Solve \eqref{eq:mfem5}--\eqref{eq:mfem6} on $\mathcal{T}_N$ to get $\bs u_N$.}

\end{algorithmic}
\end{algorithm} 

The following two lemmas concern the contraction and the continuity of the estimators on two nested meshes, the proofs are standard in the AFEM literature thus omitted, the reader can refer to, e.g.,  \cite{HuXu2013,ZhongChenShuWittumEtAl2012}.
\begin{lemma}[Contraction of the estimators]
\label{lem:eta-contraction}
 Let $(\bs w_k, \bs \phi_k)\in \bs V_k^c \times\bs V_k^{\rm CR} $ 
and $(\bs w_{k+1}, \bs \phi_{k+1})\in \bs V_{k+1}^c \times\bs V_{k+1}^{\rm CR} $ be the solutions to \eqref{eq:mfem1}-\eqref{eq:mfem4} 
on triangulations $\mathcal T_k$ and $\mathcal T_{k+1}$ obtained through Algorithm \ref{alg:afem},  $\rho = 1-2^{-1/3}$ and there exists positive constants $\beta_i \in (1-\rho\theta_i,1)$ for $i=1,2$ such that 
\begin{equation*}
\begin{gathered}
\eta_{i,k}^2(\mathcal{T}_{k+1}) 
\leq \beta_i \eta_{i,k}^2(\mathcal{T}_{k}) 
-\big[\beta_i - (1 - \rho \theta_i) \big] \eta_{i,k}^2(\mathcal{T}_{k}).
\end{gathered}
\end{equation*}

\end{lemma}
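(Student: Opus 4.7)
The plan is to follow the now-standard Dörfler-marking contraction for residual-based estimators under newest-vertex bisection. The two geometric inputs I would invoke are: (i) each marked $K\in\mathcal M$ is subdivided so that every child $K'\in\mathcal T_{k+1}$ satisfies $h_{K'}\leq 2^{-1/3}h_K$ (shape regularity together with volume halving) and the children partition $K$, so $\|\bs f\|_{0,K}^2=\sum_{K'\subset K}\|\bs f\|_{0,K'}^2$ and similarly for $\|\curl\bs w_k\|_{0,K}^2$; (ii) each face of $\mathcal F_k^i(\mathcal M)$ either becomes interior to an element of $\mathcal T_{k+1}$ (and drops out of $\mathcal F_{k+1}^i$) or is bisected into children $F'$ with $h_{F'}\leq 2^{-1/2}h_F$, while the jumps of the unchanged piecewise-polynomial $\bs w_k$ (resp.\ $\bs\phi_k$) on a child face coincide pointwise with those on the parent face, so the $L^2$-jump norms decompose additively across children.

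I would then split $\eta_{i,k}^2(\mathcal T_{k+1})$ into contributions supported on elements/faces inherited unchanged from $\mathcal T_k\setminus\mathcal M$ (which is simply $\eta_{i,k}^2(\mathcal T_k\setminus\mathcal M)$, since $\bs w_k$ and $\bs\phi_k$ are used in both estimators) and contributions supported on refined elements/faces. Applying (i)--(ii), the refined contribution is bounded by $2^{-2/3}\eta_{i,k}^2(\mathcal M)$, so setting $\rho:=1-2^{-1/3}>1-2^{-2/3}$ (the weaker of the two available reduction factors) yields
$$\eta_{i,k}^2(\mathcal T_{k+1})\leq \eta_{i,k}^2(\mathcal T_k\setminus\mathcal M)+(1-\rho)\,\eta_{i,k}^2(\mathcal M)= \eta_{i,k}^2(\mathcal T_k)-\rho\,\eta_{i,k}^2(\mathcal M).$$
Combining with the marking criterion \eqref{eq:mark}, $\eta_{i,k}^2(\mathcal M)\geq\theta_i\,\eta_{i,k}^2(\mathcal T_k)$, gives $\eta_{i,k}^2(\mathcal T_{k+1})\leq(1-\rho\theta_i)\,\eta_{i,k}^2(\mathcal T_k)$. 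The form displayed in the lemma is just the algebraic identity $(1-\rho\theta_i)=\beta_i-\bigl[\beta_i-(1-\rho\theta_i)\bigr]$, valid for any $\beta_i\in(1-\rho\theta_i,1)$; this artificial split is tailored for the upcoming contraction theorem, where the slack term $\bigl[\beta_i-(1-\rho\theta_i)\bigr]\eta_{i,k}^2(\mathcal T_k)$ will be used to absorb reliability-type cross terms coming from the error $|\bs\phi-\bs\phi_k|_{1,k}$ and the Stokes data perturbation $\|\curl(\bs w-\bs w_k)\|_0$.

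The only delicate point, and what I would verify carefully, is the bookkeeping for faces lying on $\partial\mathcal M$ and for faces interior to $\mathcal M$ that survive into $\mathcal T_{k+1}$ (e.g.\ when only one side has been bisected). For these, the additivity of the $L^2$-jump norm across child faces is what rules out any cross-term and ensures that the total summed contribution does not exceed the parent's contribution times $2^{-1/2}$. Once this combinatorial accounting is settled, $\rho=1-2^{-1/3}$ is read off as the smaller of the element reduction $1-2^{-2/3}$ and the face reduction $1-2^{-1/2}$; any universal constant smaller than both works, and $1-2^{-1/3}$ is the conventional choice in three-dimensional AFEM analyses.
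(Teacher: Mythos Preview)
Your proposal is correct and follows the standard D\"orfler-marking estimator-reduction argument, which is precisely what the paper does: it omits the proof as standard and refers to \cite{HuXu2013,ZhongChenShuWittumEtAl2012}. One arithmetic slip to clean up: you write ``$\rho:=1-2^{-1/3}>1-2^{-2/3}$'' (the inequality goes the other way), but your conclusion is unaffected since $(1-\rho)=2^{-1/3}$ dominates both the element factor $2^{-2/3}$ and the face factor $2^{-1/2}$, so the bound $\eta_{i,k}^2(\mathcal T_{k+1})\leq(1-\rho\theta_i)\,\eta_{i,k}^2(\mathcal T_k)$ holds as stated.
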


\begin{lemma}[Continuity of the estimator]
\label{lem:eta-continuity}
Under the same assumption with Lemma \ref{lem:eta-contraction}
then, then given positive constants $\delta_i\in (0,1)$ for $i=1,2$,
\begin{equation*}
% \label{eq:eta-continuity}
\begin{aligned}
&\eta_{1,k+1}^2(\mathcal{T}_{k+1})
\leq (1+\delta_1) \eta_{1,k}^2(\mathcal{T}_{k+1}) + \frac{C_1}{\delta_1} r_{k+1}^2,
\\
\text{and  }& 
\eta_{2,k+1}^2(\mathcal{T}_{k+1})
\leq (1+\delta_2) \eta_{2,k}^2(\mathcal{T}_{k+1}) 
+ \frac{C_2}{\delta_2} \bigl(h_{k+1}^2 r_{k+1}^2 + R_{k+1}^2\bigr),
\end{aligned}
\end{equation*}
where $C_i$ depends on the shape-regularity of the mesh.
\end{lemma}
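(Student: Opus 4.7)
The plan is to expand each level-$(k+1)$ estimator by inserting the coarser solutions $\bs w_k$ and $\bs\phi_k$, apply Young's inequality $(a+b)^2 \le (1+\delta)a^2+(1+\delta^{-1})b^2$ term by term, and then use a discrete trace/inverse estimate of the form $h_F\|\bs v_h\|_{0,F}^2\lesssim \|\bs v_h\|_{0,\omega_F}^2$ to convert face residuals for piecewise polynomials into bulk $L^2$ norms of the finite element differences.

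For the first bound, observe that the data part $\sum_{K\in\mathcal T_{k+1}}h_K^2\|\bs f\|_{0,K}^2$ is common to $\eta_{1,k+1}^2(\mathcal T_{k+1})$ and $\eta_{1,k}^2(\mathcal T_{k+1})$, so only the jump contribution needs attention. On each $F\in\mathcal F_{k+1}^i$, I would write
\[
\llbracket(\curl\bs w_{k+1})\times\bs n_F\rrbracket
= \llbracket(\curl\bs w_k)\times\bs n_F\rrbracket
+ \llbracket\curl(\bs w_{k+1}-\bs w_k)\times\bs n_F\rrbracket,
\]
apply Young with parameter $\delta_1$, and then invoke the discrete trace inequality on $\bs w_{k+1}-\bs w_k\in\bs V_{k+1}^c$. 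Summing over faces, and using the bounded overlap of the patches $\omega_F$, produces the asserted inequality with $C_1/\delta_1\cdot r_{k+1}^2$ as the perturbation.

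For the second bound I would handle both summands in $\eta_{2,k+1}^2(\mathcal T_{k+1})$ analogously. The element residual splits directly as
\[
h_K^2\|\curl\bs w_{k+1}\|_{0,K}^2
\le (1+\delta_2)h_K^2\|\curl\bs w_k\|_{0,K}^2
+ (1+\delta_2^{-1})h_K^2\|\curl(\bs w_{k+1}-\bs w_k)\|_{0,K}^2,
\]
and after summation the last term is bounded by $h_{k+1}^2 r_{k+1}^2$. The face jump $h_F\|\llbracket\bs n_F\times\nabla_{k+1}\bs\phi_{k+1}\rrbracket\|_{0,F}^2$ is split the same way, and the discrete trace inequality applied to $\bs\phi_{k+1}-\bs\phi_k$ (interpreted on $\mathcal T_{k+1}$) converts the face piece into $\|\nabla_{k+1}(\bs\phi_{k+1}-\bs\phi_k)\|_{0,\omega_F}^2$, whose sum is $R_{k+1}^2$.

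The one subtle point, and where I would expect the most care, is the interpretation of the coarser CR function $\bs\phi_k$ on the finer mesh: on faces of $\mathcal T_{k+1}$ lying strictly inside an unrefined $K\in\mathcal T_k\setminus\mathcal T_{k+1}$ the jump $\llbracket\bs n_F\times\nabla_{k+1}\bs\phi_k\rrbracket$ vanishes, while on faces inherited from $\mathcal T_k$ it coincides with the coarser jump, so $\eta_{2,k}^2(\mathcal T_{k+1})$ is well-defined and in fact bounded by $\eta_{2,k}^2(\mathcal T_k)$. The same remark applies to $\eta_{1,k}^2(\mathcal T_{k+1})$ with the N\'ed\'elec tangential-trace jumps. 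The constants $C_1,C_2$ absorb the shape-regularity factors from the discrete trace inequality and the finite overlap of face patches, completing the plan.
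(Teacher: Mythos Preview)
Your proposal is correct and follows precisely the standard continuity argument that the paper itself omits, deferring instead to \cite{HuXu2013,ZhongChenShuWittumEtAl2012}: split each residual by inserting the coarser solution, apply Young's inequality with parameter $\delta_i$, and convert the resulting face terms for the piecewise-polynomial differences into bulk norms via the discrete trace/inverse inequality. One small slip in wording: you write ``unrefined $K\in\mathcal T_k\setminus\mathcal T_{k+1}$'', but elements in $\mathcal T_k\setminus\mathcal T_{k+1}$ are exactly the \emph{refined} ones; your intended observation---that on the new interior faces created by refining such $K$ the jump of $\nabla_{k+1}\bs\phi_k$ vanishes because $\bs\phi_k$ is linear on $K$---is correct and is what makes $\eta_{2,k}^2(\mathcal T_{k+1})$ well-defined and dominated by $\eta_{2,k}^2(\mathcal T_k)$.
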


\begin{theorem}[Contraction]
\label{theorem:convergence-afem}
Let $(\bs w, \bs \phi)\in \bs H_0(\curl, \Omega)\times  \boldsymbol H_0^1(\Omega; \mathbb{R}^3)$ be the solutions to \eqref{eq:quarticcurldecouple1}-\eqref{eq:quarticcurldecouple4} without the Lagrange multipliers, and $(\bs w_{k+1}, \bs \phi_{k+1})\in \bs V_{k+1}^c \times\bs V_{k+1}^{\rm CR} $ and $(\bs w_k, \bs \phi_k)\in \bs V_k^c \times\bs V_k^{\rm CR} $ be their approximations in problems \eqref{eq:mfem1}-\eqref{eq:mfem4} on $\mathcal T_{k+1}$ and $\mathcal T_k$, respectively. If $\mathcal{T}_{k+1}$ is a conforming refinement from $\mathcal{T}_k$ with $h_{k+1}\leq h_k$, then there exist $\gamma_1, \gamma_2, \mu, \beta>0$, and $0<\alpha<1$ such that the AFEM in Algorithm \ref{alg:afem} satisfies
\begin{equation*}
% \label{eq:convergence-afem}
 \resizebox{0.915\textwidth}{!}{$
\begin{aligned}
& |\bs \phi - \bs \phi_{k+1}|_{1,k+1}^2 + \mu h_{k+1}^2\|\curl (\bs w - \bs w_{k+1})\|_0^2
\\
&\quad  + \gamma_1 h_{k+1}^2\eta_1^2(\bs w_{k+1}, \bs f,\mathcal T_{k+1}) 
+ \gamma_2 \tilde{\eta}_2^2(\bs\phi_{k+1},\bs w_{k+1}, \mathcal T_{k+1})
\\ 
\leq & \;\alpha\Bigl( |\bs \phi - \bs \phi_{k}|_{1,k}^2 + \mu h_{k}^2\|\curl (\bs w - \bs w_{k})\|_0^2
+ \gamma_1  h_{k}^2\eta_1^2(\bs w_{k}, \bs f,\mathcal T_{k}) 
+ \gamma_2 \tilde{\eta}_2^2(\bs\phi_k, \bs w_k, \mathcal T_{k})
\Bigr)
\end{aligned}
$}
\end{equation*}
where
\[
\tilde{\eta}_2^2(\bs w_k, \bs\phi_k,\mathcal T_{k}): = 
\bigg({\eta}_2^2(\bs w_k, \bs\phi_k,\mathcal T_{k}) + \beta \sum_{K\in \mathcal T_k}h_K^2\|\curl\bs w_k\|_{0,K}^2\bigg)
\]
is the modified estimator.
\end{theorem}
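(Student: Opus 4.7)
The plan is to assemble the contraction by combining four ingredients (Maxwell Pythagoras, Stokes quasi-orthogonality, perturbed estimator contraction, and an auxiliary reduction of the $G_k:=\sum_K h_K^2\|\curl\bs w_k\|_{0,K}^2$ term), then calibrating the weights $\mu,\gamma_1,\gamma_2,\beta$ against the contraction factors $(1+\delta_i)(1-\rho\theta_i)$ of the estimators. Following the template of \cite{HuXu2013}, the role of the modified estimator $\tilde\eta_2^2=\eta_2^2+\beta G_k$ is exactly to supply a negative $-g_k^2(\mathcal T_k\setminus\mathcal T_{k+1})$ term that absorbs the otherwise uncontrolled cross term produced by the Stokes quasi-orthogonality, a phenomenon that is absent from the pure Maxwell analysis in \cite{HuXu2013} and is the main new feature introduced by the decoupling.

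Concretely, I would start from the following four building blocks. First, from the Galerkin orthogonality \eqref{eq:GalerkinOrthcurl2grid} applied between levels $k$ and $k+1$, $e_k^2=e_{k+1}^2+r_{k+1}^2$, so $\mu h_{k+1}^2 e_{k+1}^2 \le \mu h_k^2 e_k^2 - \mu h_{k+1}^2 r_{k+1}^2$. Second, expanding $E_k^2=E_{k+1}^2+R_{k+1}^2+2\langle\nabla_{k+1}(\bs\phi-\bs\phi_{k+1}),\nabla_{k+1}(\bs\phi_{k+1}-\bs\phi_k)\rangle$, applying the Section~\ref{sec:quasi-orthogonality} bound on the inner product, and using Young's inequality with a small $\epsilon$ yields
\[
(1-\epsilon)\,E_{k+1}^2 + R_{k+1}^2 \le E_k^2 + C_\epsilon\bigl(h_k^2 r_{k+1}^2 + g_k^2(\mathcal T_k\setminus\mathcal T_{k+1})\bigr).
\]
Third, combining Lemmas~\ref{lem:eta-contraction} and \ref{lem:eta-continuity} gives
\[
\eta_{i,k+1}^2(\mathcal T_{k+1}) \le (1+\delta_i)(1-\rho\theta_i)\,\eta_{i,k}^2(\mathcal T_k) + \tfrac{C_i}{\delta_i}\bigl(h_{k+1}^2 r_{k+1}^2 + R_{k+1}^2\,\mathbf 1_{i=2}\bigr).
\]
Fourth, a direct manipulation using the volume reduction $h_{K'}^2\le\rho_G h_K^2$ for bisected children $K'\subset K$ and the splitting $\|\curl\bs w_{k+1}\|=\|\curl\bs w_k\|+\|\curl(\bs w_{k+1}-\bs w_k)\|$ produces
\[
G_{k+1} \le (1+\nu)\bigl[G_k-(1-\rho_G)\,g_k^2(\mathcal T_k\setminus\mathcal T_{k+1})\bigr] + C_\nu h_k^2 r_{k+1}^2.
\]

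These four inequalities are then added with weights $1$, $\mu$, $\gamma_1$, $\gamma_2$, $\gamma_2\beta$. The positive surplus $R_{k+1}^2$ on the left absorbs the $\gamma_2 C_2 R_{k+1}^2/\delta_2$ perturbation on the right once $\gamma_2/\delta_2$ is small. The surplus $\mu h_{k+1}^2 r_{k+1}^2$ on the left, together with the Pythagoras identity $r_{k+1}^2\le e_k^2$ and the global bound $h_k\le h_0$, absorbs all $r_{k+1}^2$-scale perturbations (noting that any $h_k$-versus-$h_{k+1}$ mismatch is absorbed into the constant through $h_k\le h_0$). The cross term $+C_\epsilon g_k^2(\mathcal T_k\setminus\mathcal T_{k+1})$ from the quasi-orthogonality is cancelled by $-\gamma_2\beta(1+\nu)(1-\rho_G)\,g_k^2(\mathcal T_k\setminus\mathcal T_{k+1})$ arising from the $G$-reduction, provided $\beta\ge C_\epsilon/[\gamma_2(1-\rho_G)(1+\nu)]$. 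After these absorptions the right-hand side takes the schematic form
\[
E_k^2 + \mu h_k^2 e_k^2 + \gamma_1 (1+\delta_1)(1-\rho\theta_1) h_k^2\eta_{1,k}^2 + \gamma_2(1+\delta_2)(1-\rho\theta_2)\eta_{2,k}^2 + \gamma_2\beta(1+\nu) G_k.
\]
To upgrade this into a strict contraction of $\Phi_k:=E_k^2+\mu h_k^2 e_k^2+\gamma_1 h_k^2\eta_{1,k}^2+\gamma_2\tilde\eta_{2,k}^2$, I invoke the reliability bound $E_k^2+h_k^2 e_k^2\lesssim h_k^2\eta_{1,k}^2+\eta_{2,k}^2$ from Lemma~\ref{lem:errorposterioriphihph} to split off a small fraction of the $E_k^2$ and $\mu h_k^2 e_k^2$ terms and reassign them to multiples of the two estimator parts, which carry genuine contraction factors strictly below $1$; the small fraction that remains becomes the contraction constant of the $E_k^2$ and $e_k^2$ parts.

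The principal obstacle is the simultaneous juggling of the six parameters $\epsilon,\delta_1,\delta_2,\nu,\beta,\mu,\gamma_1,\gamma_2$: $\beta$ must be large to absorb the quasi-orthogonality cross term but this inflates the non-contracting $(1+\nu)$ factor in front of $\beta G_k$; $\gamma_i$ must be small to keep the continuity perturbations below the available $R_{k+1}^2$ and $\mu h_{k+1}^2 r_{k+1}^2$ surplus, yet large enough that the reliability trade can channel a definite fraction of $E_k^2$ into the contracting $\eta$-parts. A secondary subtlety, and the reason the modification $\tilde\eta_2^2$ is needed at all rather than $\eta_2^2$, is that without the $\beta G_k$ term the cross contribution $g_k^2(\mathcal T_k\setminus\mathcal T_{k+1})$ has no matching negative counterpart in the two-level estimator comparison, and the argument breaks down. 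Once $\beta$ is fixed first, then $\epsilon,\delta_i,\nu$ are taken small, and finally $\mu,\gamma_1,\gamma_2$ are chosen in this order, all inequalities can be simultaneously satisfied with a single $\alpha\in(0,1)$.
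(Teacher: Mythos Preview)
Your proposal follows essentially the same strategy as the paper: combine the Maxwell Pythagoras identity, the Stokes quasi-orthogonality, the estimator contraction/continuity lemmas, and a reduction inequality for $G_k$, then calibrate weights and invoke the reliability bound \eqref{eq:errorposterioriphihph} to close the contraction. The identification of the $\beta G_k$ term as the device that absorbs the quasi-orthogonality cross term $g_k^2(\mathcal T_k\setminus\mathcal T_{k+1})$ is exactly the point of the paper's construction. Two technical slips, however, would prevent the argument from closing as written.

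First, the surplus produced by the Maxwell Pythagoras identity is $\mu h_k^2 r_{k+1}^2$, not $\mu h_{k+1}^2 r_{k+1}^2$. From $e_{k+1}^2=e_k^2-r_{k+1}^2$ one multiplies by $\mu h_k^2$ and then uses $h_{k+1}\le h_k$ to replace $\mu h_k^2 e_{k+1}^2$ by the smaller $\mu h_{k+1}^2 e_{k+1}^2$ on the left; this yields the surplus $-\mu h_k^2 r_{k+1}^2$, which directly absorbs the $C_\epsilon h_k^2 r_{k+1}^2$ coming from the quasi-orthogonality. Your stated surplus $\mu h_{k+1}^2 r_{k+1}^2$ cannot absorb a term scaled by $h_k^2$ since there is no uniform lower bound on $h_{k+1}/h_k$, and the fallback via $r_{k+1}^2\le e_k^2$ and $h_k\le h_0$ does not repair this.

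Second, the parameter order you describe is circular: you require $\beta\ge C_\epsilon/[\gamma_2(1-\rho_G)(1+\nu)]$ yet propose to fix $\beta$ before $\epsilon$ and $\gamma_2$. The paper instead fixes $\gamma_2=\delta_2/C_2$ from the $R_{k+1}^2$ absorption constraint, then sets the weight on $G_k$ (their $\gamma_3$, your $\gamma_2\beta$) to $C_Q/(\rho\epsilon)$ from the cross-term cancellation, chooses $\mu$ large afterward, and finally takes $\epsilon$ (and $\delta_3$, your $\nu$) small enough so that the explicit $\alpha$ in \eqref{eq:est-alpha} lies in $(0,1)$. With this ordering, $\beta=\gamma_3/\gamma_2$ is determined at the end, not the beginning. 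Once these two points are corrected your outline coincides with the paper's proof.
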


\begin{proof}
First by the Galerkin orthogonality \eqref{eq:GalerkinOrthcurl}, we have $e_{k+1}^2 = e_k^2 - r_{k+1}^2$. Assuming that the constant in the quasi-orthogonality \eqref{eq:phiHphihORerror} is $\sqrt{C_Q/2}$, we have by Young's inequality for an $\epsilon\in (0,1)$
\begin{align}
E_{k+1}^2 
&= E_k^2 - R_{k+1}^2  
- 2\bigl(\nabla_{k+1}(\bs \phi - \bs \phi_{k+1}), \nabla_{k+1}(\bs \phi_{k+1} - \bs \phi_k)\bigr) \notag
\\
& \leq E_k^2 - R_{k+1}^2 +\sqrt{2C_Q} \bigl(h_k r_{k+1} 
+g_k(\mathcal{T}_k\backslash \mathcal{T}_{k+1}) \bigr)E_{k+1}
  \notag\\
& \leq E_k^2 - R_{k+1}^2 + \epsilon E_{k+1}^2 
+ \frac{C_Q}{\epsilon} \Big(h_k^2 r_{k+1}^2
+ g_k^2(\mathcal{T}_k\backslash \mathcal{T}_{k+1}) \Big). \label{eq:20210820}
\end{align}
Now from Lemmas \ref{lem:eta-contraction} and \ref{lem:eta-continuity}, and $h_{k+1}\leq h_k$, we choose $\delta_i$ ($i=1,2$) such that $\beta_i := (1-\rho \theta_i) (1+\delta_i)\in (0,1)$, we have

\begin{equation}
\label{eq:est-eta1}
\eta_{1,k+1}^2(\mathcal{T}_{k+1}) \leq \beta_1 \eta^2_{1,k}(\mathcal{T}_k)
+ \Big[\delta_1 \beta_1 - (1+\delta_1)
\big(\beta_1 - (1-\rho\theta_1)\big)\Big] \eta^2_{1,k}(\mathcal{T}_k)
+ \frac{C_1}{\delta_1} r_{k+1}^2,
\end{equation}
and
\begin{equation}
\label{eq:est-eta2}
\begin{aligned}
\eta_{2,k+1}^2(\mathcal{T}_{k+1}) \leq
&\; \beta_2 \eta^2_{2,k}(\mathcal{T}_k)
+ \Big[\delta_2 \beta_2 - (1+\delta_2)
\big(\beta_2 - (1-\rho\theta_2)\big)\Big] \eta^2_{2,k}(\mathcal{T}_k)
\\
& \quad + \frac{C_2}{\delta_2} \bigl(h_k^2 r_{k+1}^2 + R_{k+1}^2\bigr).
\end{aligned}
\end{equation}
Next for the element residual term in $\eta_2$ on each $K$ we have:
\[
\begin{aligned}
\|\curl \bs w_{k+1}\|_{0,K}^2  = &\;
\| \curl \bs w_k\|_{0,K}^2 - 
\| \curl (\bs w_k - \bs w_{k+1})\|_{0,K}^2
\\ 
&\quad - 2\bigl(\curl \bs w_{k+1}, \curl (\bs w_k - \bs w_{k+1}) \bigr)_K.
\end{aligned}
\]
By the Young's inequality for a $\delta_3\in (0,1)$,
\[
(1-\delta_3)\|\curl \bs w_{k+1}\|_{0,K}^2 \leq \| \curl \bs w_k\|_{0,K}^2 + 
\frac{1-\delta_3}{\delta_3} \| \curl (\bs w_k - \bs w_{k+1})\|_{0,K}^2,
\]
and consequently applying similar techniques with Lemma \ref{lem:eta-contraction} yields:
\begin{equation}
\label{eq:est-g}
(1-\delta_3) g_{k+1}^2(\mathcal{T}_{k+1}) \leq g_k^2(\mathcal{T}_{k})
-\rho g_k^2(\mathcal{T}_k\backslash \mathcal{T}_{k+1}) + \frac{C_3}{\delta_3}h_{k+1}^2 r_{k+1}^2.  
\end{equation}

To prove the contraction result, we define 
\[
\begin{aligned}
\mathfrak{G}_{k+1} :=&\; (1-\epsilon) E_{k+1}^2 + \mu h_{k+1}^2 e_{k+1}^2
\\
& + 
  \gamma_1 h_{k+1}^2\eta_{1,k+1}^2(\mathcal{T}_{k+1}) 
+  \gamma_2 \eta_{2,k+1}^2(\mathcal{T}_{k+1})  + (1-\delta_3)\gamma_3 g_{k+1}^2(\mathcal{T}_{k+1})
\end{aligned}
\]
and
\[
\begin{aligned}
\overline{\mathfrak{G}}_{k} :=&\;E_{k}^2 + \mu h_{k}^2 e_{k}^2
\\
& + 
  \gamma_1 \beta_1 h_{k}^2\eta_{1,k}^2(\mathcal{T}_{k}) 
+  \gamma_2 \beta_2 \eta_{2,k}^2(\mathcal{T}_{k})  + \gamma_3 g_{k}^2(\mathcal{T}_{k}).
\end{aligned}
\]
Combining estimates \eqref{eq:20210820}, \eqref{eq:est-eta1}, \eqref{eq:est-eta2} and \eqref{eq:est-g} above, 
\begin{equation}
\label{eq:est-contract}
  \begin{aligned}
\mathfrak{G}_{k+1} & \leq 
\overline{\mathfrak{G}}_{k} -\left(1-\frac{\gamma_2 C_2}{\delta_2} \right)R_{k+1}^2 
\\
&\quad 
- \left(\mu -  \frac{C_Q }{\epsilon}-\sum_{j=1}^3 \frac{\gamma_j C_j}{\delta_j} \right)
h_k^2 r_{k+1}^2
\\
& \quad 
+\gamma_1 h_k^2 \Big[\delta_1 \beta_1 - (1+\delta_1)
\big(\beta_1 - (1-\rho\theta_1)\big)\Big] \eta^2_{1,k}(\mathcal{T}_k)
\\
& \quad +\gamma_2 \Big[\delta_2 \beta_2 - (1+\delta_2)
\big(\beta_2 - (1-\rho\theta_2)\big)\Big] \eta^2_{2,k}(\mathcal{T}_k)
\\
& \quad -\left( {\gamma_3 \rho} - \frac{C_Q}{\epsilon} \right)
g_k^2(\mathcal{T}_k\backslash \mathcal{T}_{k+1}).
\end{aligned}
\end{equation}
The constants are formulated such that all terms on the right hand side except the first in the inequality above vanish for a fixed $\epsilon\in (0,1)$, which is determined later. To this end, aside from the choice of $\delta_i$ ($i=1,2$) above, we choose $\gamma_2$ and $\gamma_3$ to be
\begin{equation}
\label{eq:est-gamma3}
\gamma_2 = \frac{\delta_2}{C_2}, \quad \text{and }\quad \gamma_3 = \frac{C_Q}{\rho \epsilon},
\end{equation}
and $\gamma_1$ is a free constant. Additionally, $\mu$ is free as well, and is to be chosen sufficiently large such that the following holds regardless of what values $\epsilon$ and $\delta_3$ take
\begin{equation*}
% \label{eq:est-mu}
\mu -  \frac{C_Q }{\epsilon}-\sum_{j=1}^3 \frac{\gamma_j C_j}{\delta_j} \geq 0.
\end{equation*} 

% We can choose appropriate $\gamma_i$ ($i=1,2,3$) and $\beta_j$ ($j=1,2$) such that:
% \[
% \begin{aligned}
% & \gamma_2 = \frac{\epsilon}{C_E}, \quad  \gamma_1 \geq \frac{4C_Q^2 H^2}{\epsilon}+\frac{\gamma_2 C_E}{\epsilon},
% \\
% & \beta_1 = (1-\rho\theta) (1+\epsilon), \quad \beta_2 = (1-\rho\theta/2) (1+\epsilon),
% \\
% \text{and } &\;  \gamma_3\geq \frac{8C_Q^2}{\rho\epsilon}.
% \end{aligned}
% \]
Consequently, \eqref{eq:est-contract} becomes $\mathfrak{G}_{k+1}  \leq 
\overline{\mathfrak{G}}_{k}$. For an $\alpha\in (0,1)$ we rewrite the right hand side of \eqref{eq:est-contract} as 
\[
\overline{\mathfrak{G}}_{k} = \alpha\mathfrak{G}_{k} + \mathfrak{R}_k,
\]
where 
\[
\begin{aligned}
\mathfrak{R}_k:=& \big(1-\alpha(1-\epsilon)\big) E_k^2 + \mu(1-\alpha)h_k^2 e_k^2
\\
& + \gamma_1(\beta_1 - \alpha)h_k^2 \eta_{1,k}^2(\mathcal{T}_k)
+ \gamma_2(\beta_2 - \alpha)\eta_{2,k}^2(\mathcal{T}_k)
+ \gamma_3\bigl(1- \alpha(1-\delta_3)\bigr) g_k^2(\mathcal{T}_k).
\end{aligned}
\]
It suffices to show that for the constants chosen and to be determined, there exists an $\alpha\in (0,1)$ such that $\mathfrak{R}_k\leq 0$. 
Now assuming that the constant in the reliability estimate \eqref{eq:errorposterioriphihph} is $\sqrt{C_R/2}$, and the fact that $g_k(\mathcal{T}_k)\leq \eta_{2,k}(\mathcal{T}_k) $ we have
\begin{equation*}
%  \resizebox{0.90\textwidth}{!}{$
\begin{aligned}
\mathfrak{R}_k\leq \bigl(h_k^2 \eta_{1,k}^2(\mathcal{T}_k) + \eta_{2,k}^2(\mathcal{T}_k) \bigr)   & \Big[C_R\big(1-\alpha(1-\epsilon) + \mu(1-\alpha)\big) 
\\
&\; + (\beta_1 - \alpha)\gamma_1 + (\beta_2 - \alpha)\gamma_2 + \gamma_3 \big(1-\alpha(1-\delta_3)\big)\Big] .   
\end{aligned}
% $}
\end{equation*}
It is straightforward to verify the second term on the right hand side above vanishes when letting 
\begin{equation}
\label{eq:est-alpha}
\alpha := \frac{C_R(1+\mu)+\gamma_1\beta_1+\gamma_2\beta_2+\gamma_3}
{C_R(1+\mu-\epsilon)+\gamma_1+\gamma_2+\gamma_3(1-\delta_3)} > 0.
\end{equation}
To make $\alpha<1$, by \eqref{eq:est-gamma3}, the following inequality is needed:
\begin{equation}
\label{eq:est-eps}
\epsilon C_R + \delta_3 \frac{C_Q}{\rho \epsilon} < \gamma_1(1-\beta_1)+\gamma_2(1-\beta_2).
\end{equation}
Choosing 
\[
\delta_3 = \rho\epsilon^2\quad \text{and} \quad \gamma_1 = \frac{\gamma_2\beta_2}{1-\beta_1},
\]
we have \eqref{eq:est-eps} holds with 
\[
0<\epsilon <\min \left\{\frac{\delta_2}{C_2(C_R+C_Q)},1\right\}
\]
and thus $\alpha$ defined in \eqref{eq:est-alpha} is in $(0,1)$. As a result, $\mathfrak{G}_{k+1} \leq \alpha \mathfrak{G}_{k}$ is shown, and finally the theorem follows by acknowledging that $g_k(\mathcal{T}_k)$ is a part of $\eta_{2,k}(\mathcal{T}_k)$.
\end{proof}

% \begin{remark}[Optimality]
% With the contraction result in Theorem \ref{theorem:convergence-afem}, the optimality simply follows from the quasi-orthogonality, and the existing results on the discrete reliability for the nonconforming $P_1$--$P_0$ pair (\cite[Lemma 5.1]{HuXu2013}) and the lowest order N\'ed\'elec elements(\cite[Theorem 5.3]{ZhongChenShuWittumEtAl2012}).
% \end{remark}

\section{Numerical Examples}
\label{sec:numerics}
The numerical experiments in this section, as well as in Section \ref{sec:numerics-uniform}, are carried out using $i$FEM \cite{Chen:2009ifem}. The code used for this paper is publicly available at \url{https://github.com/lyc102/ifem/tree/master/research/quadCurl}. The linear systems for the Stokes problem originated from \eqref{eq:mfem3}--\eqref{eq:mfem4} are solved using MINRES with a diagonal preconditioner, the inverse of the Schur complement of the lower right block can be efficiently approximated by a few V-cycles. For the discretized Maxwell saddle problems \eqref{eq:mfem1}--\eqref{eq:mfem2} and \eqref{eq:mfem5}--\eqref{eq:mfem6}, we use the multigrid method for Hodge Laplacian to solve a block factorization; see \cite[Section 4.4]{ChenWuZhongZhou2018Multigrid}. 

We mainly compare the performance of the adaptive algorithm under the proposed separate marking strategy \eqref{eq:mark} in Algorithm \ref{alg:afem} versus two single marking strategies: A minimum $\mathcal{M} \subseteq \mathcal{T}_k$ is sought such that
\begin{equation}
\label{eq:mark-single1}
\tag{M1}
{\eta}^2(\bs w_h, \bs\phi_h, \bs f,\mathcal M)
\geq \theta {\eta}^2(\bs w_h, \bs\phi_h, \bs f,\mathcal T_k),
\end{equation}
or
\begin{equation}
\label{eq:mark-single2}
\tag{M2}
\widetilde{\eta}^2(\bs w_h, \bs\phi_h, \bs f,\mathcal M)
\geq \theta \widetilde{\eta}^2(\bs w_h, \bs\phi_h, \bs f,\mathcal T_k),
\end{equation}
where $\eta$ is defined as 
\begin{equation}
\label{eq:eta}
\eta^2(\bs w_h, \bs\phi_h, \bs f,\mathcal M):=\eta_1^2(\bs w_h, \bs f,\mathcal M_h)+\eta_2^2(\bs\phi_h, \bs w_h,\mathcal M),
\end{equation}
and $\widetilde{\eta}(\bs w_h, \bs\phi_h, \bs f,\mathcal M)$ is defined by the $L^2$-sum of the weighted $\eta_1$ and $\eta_2$:
\begin{equation}
\label{eq:eta-scaled}
\resizebox{0.915\textwidth}{!}{$
\begin{aligned}
\widetilde{\eta}^2(\bs w_h, \bs\phi_h, \bs f,\mathcal M)
& := \sum_{K\in\mathcal M}h_K^4\|\bs f\|_{0,K}^2+\sum_{F\in\mathcal F_h^i(\mathcal M)}
h_F^3 \|\llbracket(\curl\bs w_h)\times\bs n_F\rrbracket\|_{0,F}^2
\\
&\; + \sum_{K\in\mathcal M}h_K^2\|\curl\bs w_h\|_{0,K}^2+\sum_{F\in\mathcal F_h(\mathcal M)}h_F\|\llbracket\bs n_F\times(\bs\nabla_h\bs \phi_h)\rrbracket\|_{0,F}^2.
\end{aligned}
$}
\end{equation} 
On a uniform mesh, $\widetilde{\eta}^2$ can be viewed as approximately $h^2 \eta_1^2 + \eta_2^2$.

To demonstrate the reason why we opt for the proposed separate marking strategy \eqref{eq:mark}, and not single markings such as \eqref{eq:mark-single1} and \eqref{eq:mark-single2}, we construct a toy example using $\bs{u} = \curl \langle 0, 0, \mu\rangle$ for a potential function $\mu = r^{19/6}\sin(2\theta/3)$ in the cylindrical coordinate as in Section \ref{sec:numerics-uniform} example 2 on an L-shaped domain. In this case we have a regular 
$\bs \phi = \curl \bs u\in \bs H^{13/6-\epsilon}(\Omega)$ while $\bs w$ has a mild singularity near the nonconvex corner.

The results for the convergence of Algorithm \ref{alg:afem} using marking strategies \eqref{eq:mark}, \eqref{eq:mark-single1}, and \eqref{eq:mark-single2} can be found in Figures \ref{fig:ex3-conv}, \ref{fig:ex3-conv-single1}, and \ref{fig:ex3-conv-single2}, respectively. 

Using the proposed marking \eqref{eq:mark}, we obtain the desired optimal convergence for $|\bs\phi_h -\curl \bs u|_{1,h}$ being optimal in that the convergence is at the rate of ``linear'' $\approx\# (\mathrm{DoF})^{-1/3}$. Additionally, $\|\bs\phi_h -\curl \bs u\|_{0}$ and $\|\bs u - \bs u_h\|_0$ converge ``quadratically'', i.e., in the order of approximately $\# (\mathrm{DoF})^{-2/3}$ even though the error estimator's reliability and efficiency are not directly measured in those norms. In this experiment, $\theta_1 = 0.5$ and $\theta_2 = 0.3$.

Using marking \eqref{eq:mark-single1} with $\theta=0.3$, if $\eta_1$ is unweighted in the $\eta$ in \eqref{eq:eta}, due to $\bs w$ being singular while $\bs \phi$ being regular, thanks to $\eta_1$ being locally efficient, the marked elements are dominantly concentrated on which $\eta_1$ are large. As a result, marking \eqref{eq:mark-single1} drives the AFEM algorithm favoring reducing the error for $\bs w$, while the errors in approximating $\bs \phi$ and $\bs u$ barely change (Figure \ref{fig:ex3-conv-single1}). Fortunately, due to the regularity lifting effect from $\bs w_h$ being the data for the problem of $\bs \phi_h$ (cf. Lemma \ref{lem:estimate-phitilde}), to achieve the optimal rate of convergence, $\bs w_h$ does not have to be approximated to the same precision with $\bs \phi_h$.

If marking \eqref{eq:mark-single2} with $\theta =0.3$ is used where $\eta_1$ is locally weighted by the mesh size $h_K$, the optimal rates of convergence is restored. However, one does benefit from choosing different marking parameters for approximating $\bs w$ and $\bs \phi$ due to the regularity difference (see e.g., \cite{Chen.Dai:2002efficiency}). Moreover, one does not have the Galerkin orthogonality for 
$\|\curl (\bs w - \bs w_h)\|$ to exploit in the proof of the contraction in Theorem \ref{theorem:convergence-afem}, because the consecutive difference is measured under a norm weighted by the mesh size. As a result, it needs new tools that are not available in any of the current literature to prove a similar contraction result when the local error indicator is further weighted by the local mesh size $h_K$.

\begin{figure}[htbp]
  \centering
\begin{subfigure}[b]{0.31\linewidth}
    \includegraphics[width=1\textwidth]{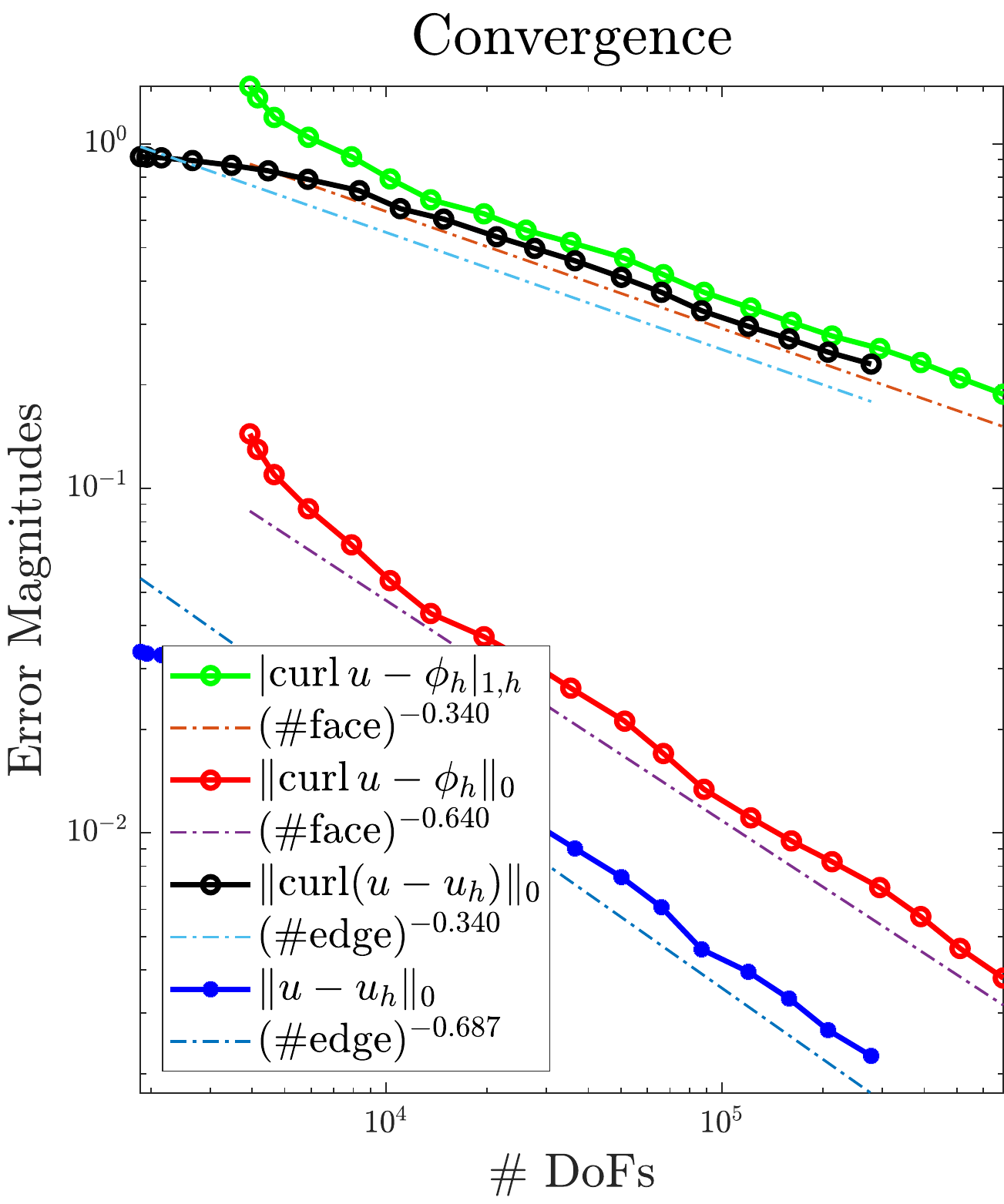}
    \caption{\label{fig:ex3-conv}}
\end{subfigure}%
\hspace{3pt}
\begin{subfigure}[b]{0.3\linewidth}
  \centering
      \includegraphics[width=\textwidth]{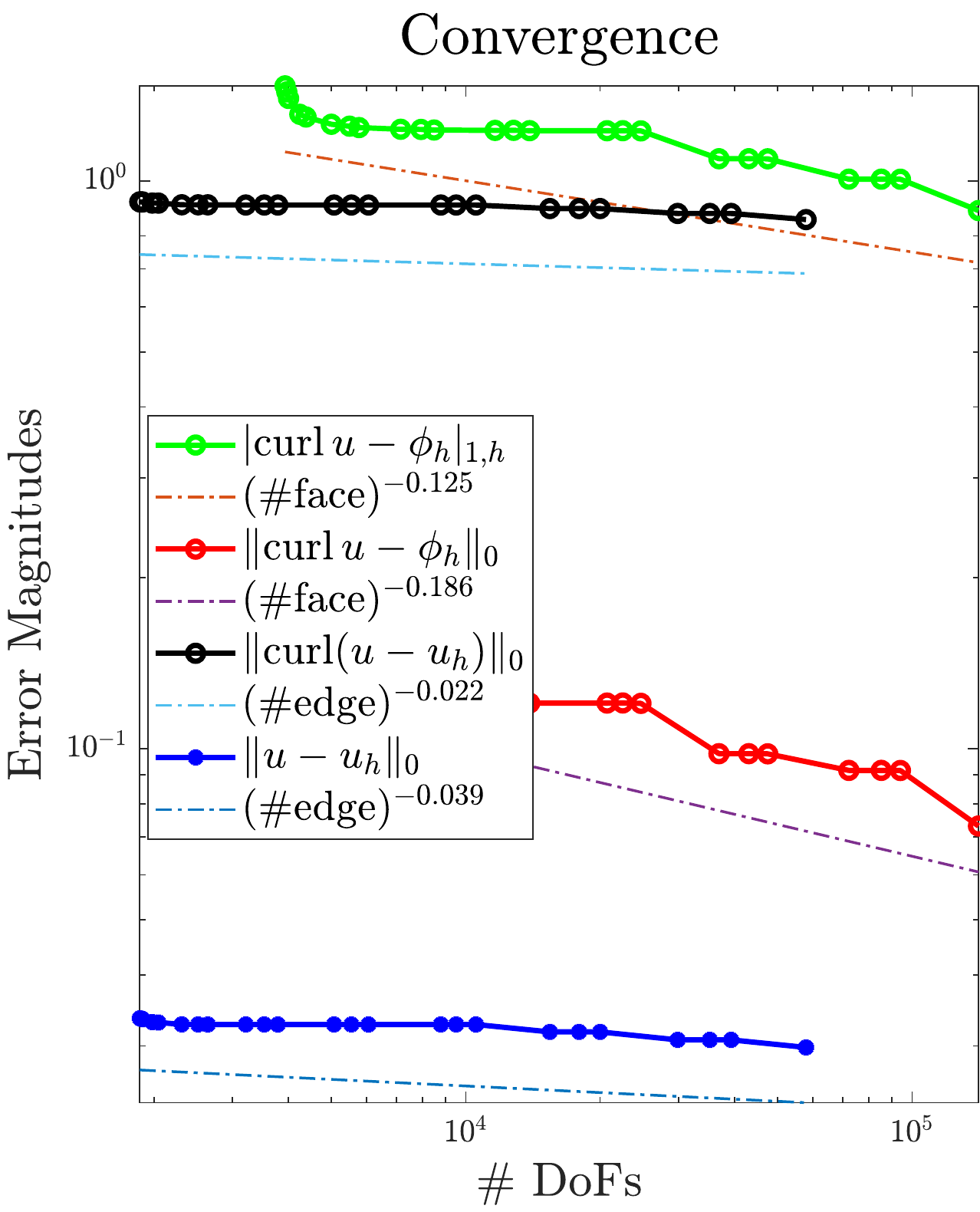}
      \caption{\label{fig:ex3-conv-single1}}
\end{subfigure}
\hspace{3pt}
\begin{subfigure}[b]{0.3\linewidth}
      \includegraphics[width=\textwidth]{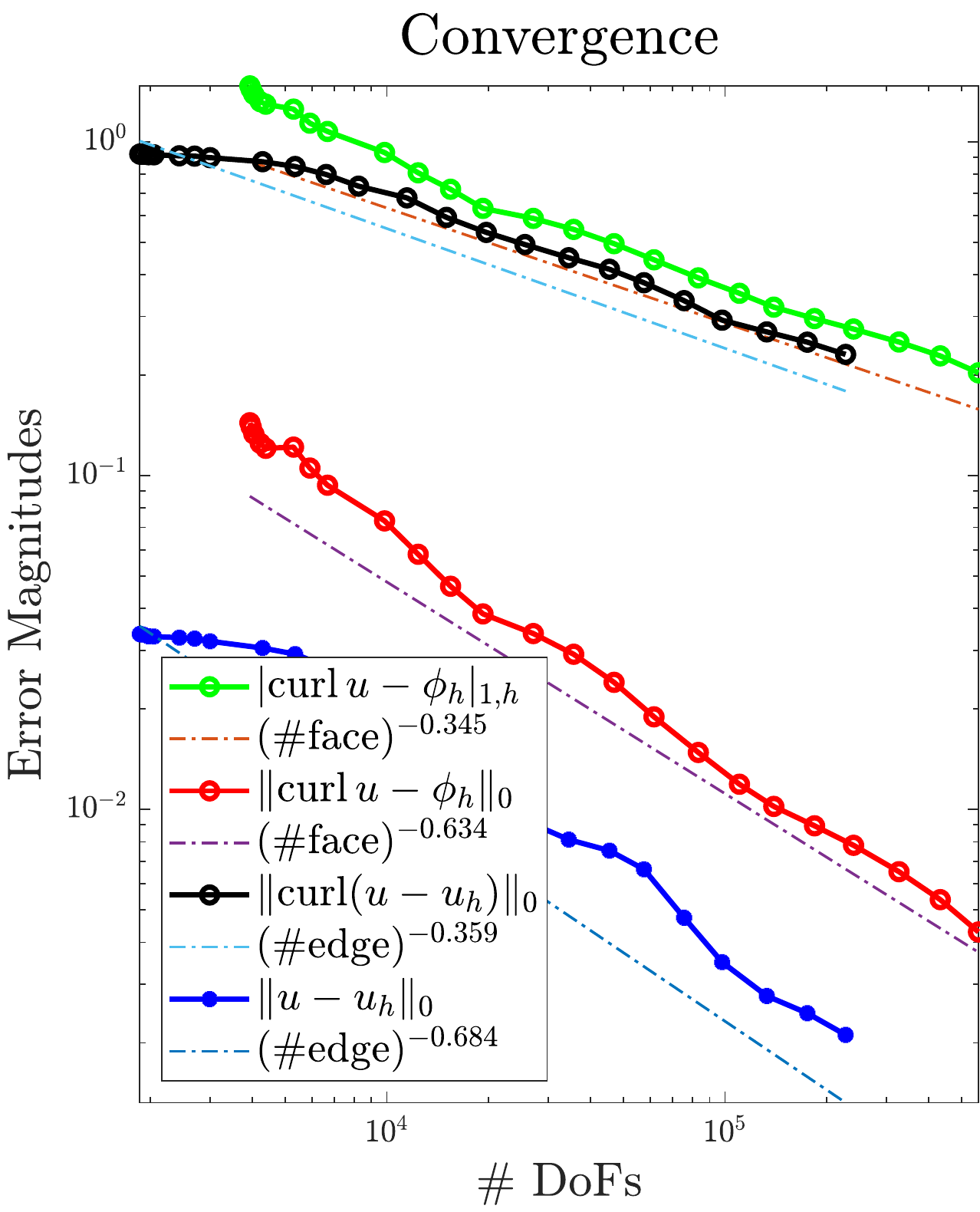}
      \caption{\label{fig:ex3-conv-single2}}
\end{subfigure}
\caption{The convergence results for AFEM \ref{alg:afem} using different marking strategies: (\subref{fig:ex3-conv}) separate marking for $\eta_1$ and $\eta_2$. (\subref{fig:ex3-conv-single1}) single marking for $\eta$. (\subref{fig:ex3-conv-single2}) single marking for $\widetilde{\eta}$. }
\end{figure}

\section*{Acknowledgement}
We greatly appreciate the anonymous reviewers' revising suggestions.

\end{document}